\newtheorem{theorem}{Theorem}[section]
\newtheorem{lemma}[theorem]{Lemma}
\newtheorem{proposition}[theorem]{Proposition}
\newtheorem{corollary}[theorem]{Corollary}
\theoremstyle{definition}
\newtheorem{definition}[theorem]{Definition}
\theoremstyle{remark}
\newtheorem{remark}[theorem]{Remark}
\numberwithin{equation}{section}
\begin{document}
%------------------------
\address{$^{[1,3]}$ Department of Mathematics, Jadavpur University, Kolkata 700032, West Bengal, India.}
\email{\url{pintubhunia5206@gmail.com} ; \url{kalloldada@gmail.com}}

\address{$^{[2]}$ University of Sfax, Tunisia.}
\email{\url{kais.feki@hotmail.com}}

\subjclass[2010]{47B65, 47A12, 46C05, 47A05}

\keywords{Positive operator, numerical radius, orthogonality, parallelism, $A$-rank one operator.}

\thanks{Pintu Bhunia would like to thank UGC, Govt. of India for the financial support in the form of SRF. Prof. Kallol Paul would like to thank RUSA 2.0, Jadavpur University for the partial support.}

\date{\today}
%\date{June 24, 2019}
\author[Pintu Bhunia, Kais Feki and Kallol Paul] {Pintu Bhunia$^{1}$, Kais Feki$^{2}$ and Kallol Paul$^{3}$ }
\title[$A$-Numerical radius orthogonality and parallelism ]{$A$-Numerical radius orthogonality and parallelism of semi-Hilbertian space operators and their applications}

\maketitle
%------------------------
%----------\thispagestyle{empty}

\begin{abstract}
In this paper, we aim to introduce and characterize the concept of numerical radius orthogonality of operators on a complex Hilbert space $\mathcal{H}$ which are bounded with respect to the semi-norm induced by a positive operator $A$ on $\mathcal{H}$. Moreover, a characterization of the $A$-numerical radius parallelism for $A$-rank one operators is proved. As applications of the obtained results, we obtain some $\mathbb{A}$-numerical radius inequalities of operator matrices where $\mathbb{A}$ is the operator diagonal matrix with diagonal entries are positive operator $A$. Some other related results are also investigated.
\end{abstract}

\section{Introduction and Preliminaries}\label{s1}
\noindent
Throughout this paper, $\mathcal{B}(\mathcal{H})$ denote the $C^{\ast}$-algebra of all bounded linear operators acting on a complex Hilbert space $\mathcal{H}$ with an inner product $\langle \cdot\mid \cdot \rangle$ and the corresponding norm $\|\cdot\| $. The symbol $I$ stands for the identity operator
on $\mathcal{H}$.
In all that follows, by an operator we mean a bounded linear operator. The range of every operator $T$ is denoted by $\mathcal{R}(T)$, its null space by $\mathcal{N}(T)$ and $T^*$ is the adjoint of $T$. Let $\mathcal{B}(\mathcal{H})^+$ be the cone of positive (semi-definite) operators, i.e.
$\mathcal{B}(\mathcal{H})^+=\left\{A\in \mathcal{B}(\mathcal{H})\,:\,\langle Ax\mid x\rangle\geq 0,\;\forall\;x\in \mathcal{H}\;\right\}$.
Every $A\in \mathcal{B}(\mathcal{H})^+$ defines the following positive semi-definite sesquilinear form:
$$\langle\cdot\mid\cdot\rangle_{A}:\mathcal{H}\times \mathcal{H}\longrightarrow\mathbb{C},\;(x,y)\longmapsto\langle x\mid y\rangle_{A} =\langle Ax\mid y\rangle.$$
Clearly, the induced semi-norm is given by $\|x\|_A=\langle x\mid x\rangle_A^{1/2}$, for every $x\in \mathcal{H}$. This makes $\mathcal{H}$ into a semi-Hilbertian space. One can verify that $\|\cdot\|_A$ is a norm on $\mathcal{H}$ if and only if $A$ is injective, and that $(\mathcal{H},\|\cdot\|_A)$ is complete if and only if $\mathcal{R}(A)$ is closed.
\begin{definition} (\cite{acg1})
Let $A\in\mathcal{B}(\mathcal{H})^+$ and $T \in \mathcal{B}(\mathcal{H})$. An operator $S\in\mathcal{B}(\mathcal{H})$ is called an $A$-adjoint of $T$ if for every $x,y\in \mathcal{H}$, the identity $\langle Tx\mid y\rangle_A=\langle x\mid Sy\rangle_A$ holds. That is $S$ is solution in $\mathcal{B}(\mathcal{H})$ of the equation $AX=T^*A$.
\end{definition}
\noindent The existence of an $A$-adjoint operator is not guaranteed. The set of all operators which admit $A$-adjoints is denoted by $\mathcal{B}_{A}(\mathcal{H})$. By Douglas Theorem \cite{doug}, we  have
\begin{align}\label{badeh}
\mathcal{B}_{A}(\mathcal{H})
& = \left\{T\in \mathcal{B}(\mathcal{H})\,:\;\mathcal{R}(T^{*}A)\subseteq \mathcal{R}(A)\right\}\nonumber\\
 &=\left\{T \in \mathcal{B}(\mathcal{H})\,:\;\exists \,\lambda > 0\,\textit{such that}\;\|ATx\| \leq \lambda \|Ax\|,\;\forall\,x\in \mathcal{H}  \right\}.
\end{align}
If $T\in \mathcal{B}_A(\mathcal{H})$, the reduced solution of the equation
$AX=T^*A$ is a distinguished $A$-adjoint operator of $T$, which is denoted by $T^{\sharp_A}$. Note that, $T^{\sharp_A}=A^\dag T^*A$ in which $A^\dag$ is the Moore-Penrose inverse of $A$. For more results concerning $T^{\sharp_A}$, see \cite{acg1,acg2}.
Again, by applying Douglas theorem, it can observed that
\begin{equation}\label{abbbbbbbb}
\mathcal{B}_{A^{1/2}}(\mathcal{H})=\left\{T \in \mathcal{B}(\mathcal{H})\,:\;\exists \,\lambda > 0\,\textit{such that}\;\|Tx\|_{A} \leq \lambda \|x\|_{A},\;\forall\,x\in \mathcal{H}  \right\}.
\end{equation}
Operators in $\mathcal{B}_{A^{1/2}}(\mathcal{H})$ are called $A$-bounded. Further, $\langle\cdot\mid\cdot\rangle_{A}$ induces the following semi-norm on $\mathcal{B}_{A^{1/2}}(\mathcal{H})$:
\begin{equation}\label{aseminorm}
\|T\|_A=\sup_{\substack{x\in \overline{\mathcal{R}(A)},\\ x\not=0}}\frac{\|Tx\|_A}{\|x\|_A}=\sup\left\{\|Tx\|_{A}\,;\;x\in \mathcal{H},\,\|x\|_{A}= 1\right\}<+\infty.
\end{equation}
For the rest of this paper, $A$ denotes a nonzero operator in $\mathcal{B}(\mathcal{H})^+$ and $P_A$ will be denoted to be the projection onto $\overline{\mathcal{R}(A)}$. Moreover, it is important to point out the following facts. The semi-inner product $\langle\cdot\mid\cdot\rangle_A$ induces an inner product on the quotient space $\mathcal{H}/\mathcal{N}(A)$ defined as
$$[\overline{x},\overline{y}]=\langle Ax\mid y\rangle,$$
for all $\overline{x},\overline{y}\in \mathcal{H}/\mathcal{N}(A)$. Notice that $(\mathcal{H}/\mathcal{N}(A),[\cdot,\cdot])$ is not complete unless $\mathcal{R}(A)$ is not closed. However, a canonical construction due to L. de Branges and J. Rovnyak in \cite{branrov} shows that the completion of $\mathcal{H}/\mathcal{N}(A)$  under the inner product $[\cdot,\cdot]$ is isometrically isomorphic to the Hilbert space $\mathcal{R}(A^{1/2})$
with the inner product
$$(A^{1/2}x,A^{1/2}y)=\langle P_Ax\mid P_Ay\rangle,\;\forall\, x,y \in \mathcal{H}.$$\\
\noindent In the sequel, the Hilbert space $\left(\mathcal{R}(A^{1/2}), (\cdot,\cdot)\right)$ will be denoted by $\mathbf{R}(A^{1/2})$ and we use the symbol
$\|\cdot\|_{\mathbf{R}(A^{1/2})}$ to represent the norm induced by the inner product $(\cdot,\cdot)$.  The interested reader is referred to \cite{acg3} for more information related to the Hilbert space $\mathbf{R}(A^{1/2})$. Notice that the fact $\mathcal{R}(A)\subset \mathcal{R}(A^{1/2}) $ implies that
\begin{align}\label{usefuleq001}
(Ax,Ay)
&=\langle x \mid y\rangle_A.
\end{align}
This leads to the following useful relation:
\begin{equation}\label{usefuleq01}
\|Ax\|_{\mathbf{R}(A^{1/2})}=\|x\|_A,\;\forall\,x\in \mathcal{H}.
\end{equation}
The following useful proposition is taken from \cite{acg3}.
\begin{proposition}\label{prop_arias}
Let $T\in \mathcal{B}(\mathcal{H})$. Then $T\in \mathcal{B}_{A^{1/2}}(\mathcal{H})$ if and only if there exists a unique $\widetilde{T}\in \mathcal{B}(\mathbf{R}(A^{1/2}))$ such that $Z_AT =\widetilde{T}Z_A$. Here, $Z_{A}: \mathcal{H} \rightarrow \mathbf{R}(A^{1/2})$ is
defined by $Z_{A}x = Ax$.
\end{proposition}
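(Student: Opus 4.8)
The plan is to regard $Z_A$ as a bounded map of $\mathcal{H}$ onto the subspace $\mathcal{R}(A)$ of $\mathbf{R}(A^{1/2})$ which, by \eqref{usefuleq01}, transports the semi-norm $\|\cdot\|_A$ isometrically onto $\|\cdot\|_{\mathbf{R}(A^{1/2})}$, and then to realize $\widetilde{T}$ as the unique continuous extension of the densely defined assignment $Ax\mapsto A(Tx)$. Indeed, the intertwining relation $Z_A T=\widetilde{T}Z_A$ simply reads $A(Tx)=\widetilde{T}(Ax)$ for all $x\in\mathcal{H}$, so $\widetilde{T}$ is forced to send each $Ax$ to $A(Tx)$; the whole argument is the verification that this forced prescription is consistent and bounded precisely when $T$ is $A$-bounded.

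For the easy implication I would assume $\widetilde{T}\in\mathcal{B}(\mathbf{R}(A^{1/2}))$ exists with $Z_AT=\widetilde{T}Z_A$ and estimate, for every $x\in\mathcal{H}$,
$$\|Tx\|_A=\|A(Tx)\|_{\mathbf{R}(A^{1/2})}=\|\widetilde{T}(Ax)\|_{\mathbf{R}(A^{1/2})}\leq\|\widetilde{T}\|\,\|Ax\|_{\mathbf{R}(A^{1/2})}=\|\widetilde{T}\|\,\|x\|_A,$$
where \eqref{usefuleq01} is used at the first and last equalities and the intertwining relation in the middle. By \eqref{abbbbbbbb} this yields $T\in\mathcal{B}_{A^{1/2}}(\mathcal{H})$ with $\|T\|_A\leq\|\widetilde{T}\|$.

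For the converse, starting from $T\in\mathcal{B}_{A^{1/2}}(\mathcal{H})$, I would set $\widetilde{T}(Ax)=A(Tx)$ on $\mathcal{R}(A)=\mathcal{R}(Z_A)$. Both well-definedness and boundedness flow from the single chain $\|A(Tx)\|_{\mathbf{R}(A^{1/2})}=\|Tx\|_A\leq\|T\|_A\,\|x\|_A=\|T\|_A\,\|Ax\|_{\mathbf{R}(A^{1/2})}$: if $Ax=Ay$ in $\mathbf{R}(A^{1/2})$, that is $\|x-y\|_A=0$, then $\|T(x-y)\|_A=0$, hence $A(Tx)=A(Ty)$, so the map is well defined; and the same inequality shows it is Lipschitz with constant $\|T\|_A$ on $\mathcal{R}(A)$.

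The remaining point—which I regard as the crux—is that $\mathcal{R}(A)$ is dense in $\mathbf{R}(A^{1/2})$. Granting this, the bounded linear transformation theorem produces a unique continuous extension $\widetilde{T}\in\mathcal{B}(\mathbf{R}(A^{1/2}))$, and $Z_AT=\widetilde{T}Z_A$ holds by the very definition of $\widetilde{T}$ on $\mathcal{R}(A)$; uniqueness in the statement is then immediate, since any two solutions agree on the dense range $\mathcal{R}(A)$ of $Z_A$ and therefore on all of $\mathbf{R}(A^{1/2})$. To settle the density I would show the $(\cdot,\cdot)$-orthogonal complement of $\mathcal{R}(A)$ is trivial: if $A^{1/2}u$ is $(\cdot,\cdot)$-orthogonal to every $Av=A^{1/2}(A^{1/2}v)$, then the identity $(A^{1/2}x,A^{1/2}y)=\langle P_Ax\mid P_Ay\rangle$ together with $P_AA^{1/2}v=A^{1/2}v$ forces $\langle P_Au\mid A^{1/2}v\rangle=0$ for all $v$, so $P_Au\perp\mathcal{R}(A^{1/2})$ in $\mathcal{H}$; since $P_Au\in\overline{\mathcal{R}(A^{1/2})}=\overline{\mathcal{R}(A)}$ this gives $P_Au=0$ and hence $\|A^{1/2}u\|_{\mathbf{R}(A^{1/2})}^2=\|P_Au\|^2=0$. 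This density is essentially the content borrowed from \cite{acg3}, so citing it directly would be an acceptable alternative to reproducing the argument.
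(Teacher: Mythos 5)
Your proof is correct. The paper does not actually prove this proposition --- it is quoted verbatim from \cite{acg3} --- but your argument (isometry $\|Ax\|_{\mathbf{R}(A^{1/2})}=\|x\|_A$, the forced prescription $Ax\mapsto ATx$ on $\mathcal{R}(A)$, density of $\mathcal{R}(A)$ in $\mathbf{R}(A^{1/2})$ via the trivial orthogonal complement, and extension by the bounded linear transformation theorem) is exactly the standard proof from that reference, and every step, including the density argument, is sound.
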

 Before we move on, it is important to state the following useful lemmas.
\begin{lemma}$($\cite{kais01}$)$\label{preprintkais}
If $T\in \mathcal{B}_{A^{1/2}}(\mathcal{H})$, then we have
\begin{itemize}
  \item [(1)] $\|T\|_A=\|\widetilde{T}\|_{\mathcal{B}(\mathbf{R}(A^{1/2}))}$.
  \item [(2)] $\omega_A(T)=\omega(\widetilde{T})$,
\end{itemize}
where $\omega(\widetilde{T})$ is the numerical radius of $\widetilde{T}$ and $\omega_A(T)$ is the $A$-numerical radius of $T$, defined as below.
\end{lemma}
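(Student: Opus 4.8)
The plan is to transport both left-hand quantities onto the Hilbert space $\mathbf{R}(A^{1/2})$ by means of the operator $Z_A$, exploiting the intertwining relation $Z_A T = \widetilde{T} Z_A$ from Proposition \ref{prop_arias} together with the isometric identities \eqref{usefuleq001} and \eqref{usefuleq01}. The essential tool is that $Z_A$ has dense range in $\mathbf{R}(A^{1/2})$: indeed $\mathcal{R}(Z_A)=\mathcal{R}(A)$, and the de Branges--Rovnyak construction recalled above (see also \cite{acg3}) guarantees that $\mathcal{R}(A)$ is dense in $\mathbf{R}(A^{1/2})$. This density, combined with the continuity of the maps $\xi\mapsto\|\widetilde{T}\xi\|_{\mathbf{R}(A^{1/2})}$ and $\xi\mapsto|(\widetilde{T}\xi,\xi)|$, will let me evaluate the suprema defining $\|\widetilde{T}\|_{\mathcal{B}(\mathbf{R}(A^{1/2}))}$ and $\omega(\widetilde{T})$ by restricting to vectors of the form $\xi=Z_A x = Ax$.

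For part (1) I would start from $\|\widetilde{T}\|_{\mathcal{B}(\mathbf{R}(A^{1/2}))}=\sup\{\|\widetilde{T}\xi\|_{\mathbf{R}(A^{1/2})}:\|\xi\|_{\mathbf{R}(A^{1/2})}=1\}$ and use the density of $\mathcal{R}(Z_A)$ to replace this by the supremum of the Rayleigh quotient over nonzero vectors $\xi=Z_A x$, noting that $Ax\neq 0$ exactly when $\|x\|_A\neq 0$. For such $\xi$ the intertwining relation gives $\widetilde{T}Z_A x=Z_A(Tx)$, while \eqref{usefuleq01} yields $\|Z_A u\|_{\mathbf{R}(A^{1/2})}=\|u\|_A$ for every $u\in\mathcal{H}$. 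Substituting $u=Tx$ and $u=x$ converts the quotient $\|\widetilde{T}Z_A x\|_{\mathbf{R}(A^{1/2})}/\|Z_A x\|_{\mathbf{R}(A^{1/2})}$ into $\|Tx\|_A/\|x\|_A$, whose supremum is precisely $\|T\|_A$ by \eqref{aseminorm}.

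For part (2) I would proceed identically, beginning from $\omega(\widetilde{T})=\sup\{|(\widetilde{T}\xi,\xi)|:\|\xi\|_{\mathbf{R}(A^{1/2})}=1\}$ and restricting to $\xi=Z_A x$ by density. Here the intertwining relation gives $(\widetilde{T}Z_A x,Z_A x)=(Z_A(Tx),Z_A x)=(A(Tx),Ax)$, and identity \eqref{usefuleq001} rewrites this inner product as $\langle Tx\mid x\rangle_A$. Dividing by $\|Z_A x\|_{\mathbf{R}(A^{1/2})}^2=\|x\|_A^2$ recovers exactly the quotient $|\langle Tx\mid x\rangle_A|/\|x\|_A^2$ whose supremum over $\|x\|_A=1$ defines $\omega_A(T)$.

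The routine part thus reduces to these two substitutions, so the only genuinely delicate point is the density/continuity step: one must justify that taking the supremum of the continuous functional $\xi\mapsto\|\widetilde{T}\xi\|_{\mathbf{R}(A^{1/2})}$ (respectively $\xi\mapsto|(\widetilde{T}\xi,\xi)|/\|\xi\|_{\mathbf{R}(A^{1/2})}^2$) over the dense set $\mathcal{R}(Z_A)\setminus\{0\}$ yields the same value as over all of $\mathbf{R}(A^{1/2})\setminus\{0\}$. This follows from the boundedness of $\widetilde{T}$ and a standard approximation argument, but it is where care is required, since the full operator norm and numerical radius are \emph{a priori} defined via suprema over the whole completed space rather than over the dense image of $\mathcal{H}$ under $Z_A$.
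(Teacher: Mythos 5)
Your argument is correct. Note that the paper does not prove this lemma at all --- it is imported verbatim from \cite{kais01} --- so there is no internal proof to compare against; your transport-by-$Z_A$ argument (intertwining $Z_AT=\widetilde{T}Z_A$, the isometry \eqref{usefuleq01}, identity \eqref{usefuleq001}, and the density of $\mathcal{R}(A)=\mathcal{R}(Z_A)$ in $\mathbf{R}(A^{1/2})$ coming from the de Branges--Rovnyak completion) is exactly the standard one used in that reference and implicitly throughout Section 2 of this paper (e.g.\ in the proof of Lemma \ref{l1}). The one delicate point you rightly isolate --- that suprema of $\|\widetilde{T}\xi\|_{\mathbf{R}(A^{1/2})}/\|\xi\|_{\mathbf{R}(A^{1/2})}$ and $|(\widetilde{T}\xi,\xi)|/\|\xi\|^2_{\mathbf{R}(A^{1/2})}$ over the dense subspace $\mathcal{R}(A)\setminus\{0\}$ agree with those over all of $\mathbf{R}(A^{1/2})\setminus\{0\}$ --- is handled correctly by continuity and boundedness of $\widetilde{T}$, so no gap remains.
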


\begin{lemma}\label{lau}$($\cite[Proposition 2.9]{majsecesuci}$)$
Let $T\in \mathcal{B}_A(\mathcal{H})$. Then,
$$\widetilde{T^{\sharp_A}}=\big(\widetilde{T}\big)^*\;\text{ and }\; \widetilde{({T^{\sharp_A}})^{\sharp_A}}=\widetilde{T}.$$
\end{lemma}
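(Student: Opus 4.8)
The plan is to establish the first identity $\widetilde{T^{\sharp_A}} = (\widetilde{T})^*$ directly, by comparing the two operators through their inner products on a dense subset of $\mathbf{R}(A^{1/2})$, and then to deduce the second identity as a formal consequence. First I would note that since $T\in \mathcal{B}_A(\mathcal{H})\subseteq \mathcal{B}_{A^{1/2}}(\mathcal{H})$, both $\widetilde{T}$ and $\widetilde{T^{\sharp_A}}$ are well-defined operators in $\mathcal{B}(\mathbf{R}(A^{1/2}))$ by Proposition \ref{prop_arias}, each satisfying its intertwining relation $Z_A T = \widetilde{T}Z_A$ and $Z_A T^{\sharp_A} = \widetilde{T^{\sharp_A}}Z_A$. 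The key structural observation is that $Z_A\mathcal{H} = \mathcal{R}(A)$ is dense in $\mathbf{R}(A^{1/2})$, so it suffices to verify the adjoint relation on vectors of the form $Z_A x = Ax$.

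Concretely, I would fix $x,y\in \mathcal{H}$ and compute $(\widetilde{T^{\sharp_A}}Z_A x, Z_A y)$ in $\mathbf{R}(A^{1/2})$. Using the intertwining relation this equals $(Z_A T^{\sharp_A}x, Z_A y)=(AT^{\sharp_A}x, Ay)$, and by \eqref{usefuleq001} this is $\langle T^{\sharp_A}x \mid y\rangle_A$. The defining property of the $A$-adjoint, together with the conjugate symmetry of $\langle\cdot\mid\cdot\rangle_A$, rewrites this as $\langle x \mid Ty\rangle_A$. On the other side, $(Z_A x, \widetilde{T}Z_A y)=(Ax, ATy)=\langle x \mid Ty\rangle_A$, again by \eqref{usefuleq001}. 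Hence $(\widetilde{T^{\sharp_A}}Z_A x, Z_A y)=(Z_A x, \widetilde{T}Z_A y)$ for all $x,y\in\mathcal{H}$, and passing to the closure of $\mathcal{R}(A)$ by continuity yields $(\widetilde{T^{\sharp_A}}u, v)=(u, \widetilde{T}v)$ for all $u,v\in\mathbf{R}(A^{1/2})$, that is $\widetilde{T^{\sharp_A}}=(\widetilde{T})^*$.

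For the second identity I would invoke the known fact from the Arias--Corach--Gonzalez theory (\cite{acg1,acg2}) that $\mathcal{B}_A(\mathcal{H})$ is closed under $\sharp_A$, so that $(T^{\sharp_A})^{\sharp_A}$ is defined and lies in $\mathcal{B}_A(\mathcal{H})\subseteq\mathcal{B}_{A^{1/2}}(\mathcal{H})$. Applying the first identity to the operator $T^{\sharp_A}$ in place of $T$ gives $\widetilde{(T^{\sharp_A})^{\sharp_A}}=(\widetilde{T^{\sharp_A}})^*$, and substituting $\widetilde{T^{\sharp_A}}=(\widetilde{T})^*$ together with the involutivity of the Hilbert-space adjoint on $\mathbf{R}(A^{1/2})$ yields $\widetilde{(T^{\sharp_A})^{\sharp_A}}=((\widetilde{T})^*)^*=\widetilde{T}$.

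I expect the only genuinely delicate points to be bookkeeping rather than conceptual. The main one is the density of $\mathcal{R}(A)$ in $\mathbf{R}(A^{1/2})$, which legitimizes checking the adjoint identity only on the range of $Z_A$; this follows from the de Branges--Rovnyak description recalled above, under which $Z_A$ embeds $\mathcal{H}/\mathcal{N}(A)$ isometrically onto a dense subspace. The second point is the correct handling of the conjugate symmetry when passing from $\langle T^{\sharp_A}x \mid y\rangle_A$ to $\langle x \mid Ty\rangle_A$, since the $A$-adjoint is defined through $\langle Tx \mid y\rangle_A=\langle x \mid T^{\sharp_A}y\rangle_A$ and one must apply it in the reversed order. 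Neither presents a real obstacle once the intertwining relation and \eqref{usefuleq001} are in hand.
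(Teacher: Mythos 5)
Your proof is correct. Note that the paper itself offers no proof of this lemma --- it is imported verbatim from \cite[Proposition 2.9]{majsecesuci} --- so there is no in-text argument to compare against; your write-up in effect supplies the missing proof, and it is the natural one. The chain $(\widetilde{T^{\sharp_A}}Z_Ax, Z_Ay)=\langle T^{\sharp_A}x\mid y\rangle_A=\langle x\mid Ty\rangle_A=(Z_Ax,\widetilde{T}Z_Ay)$ is valid, the density of $Z_A\mathcal{H}=\mathcal{R}(A)$ in $\mathbf{R}(A^{1/2})$ does follow from the de Branges--Rovnyak identification recalled in Section \ref{s1}, and you correctly handle the one-sided definition of the $A$-adjoint via conjugate symmetry of $\langle\cdot\mid\cdot\rangle_A$. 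Your derivation of the second identity by applying the first to $T^{\sharp_A}$ (which lies in $\mathcal{B}_A(\mathcal{H})$, as the paper records) and using involutivity of the Hilbert-space adjoint is also sound, and has the added merit of sidestepping the formula $(T^{\sharp_A})^{\sharp_A}=P_ATP_A$, which would otherwise require the extra observation that $\widetilde{P_ATP_A}=\widetilde{T}$.
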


\noindent Now we note that $\mathcal{B}_{A}(\mathcal{H})$ and $\mathcal{B}_{A^{1/2}}(\mathcal{H})$ are two subalgebras of $\mathcal{B}(\mathcal{H})$ which are neither closed nor dense in $\mathcal{B}(\mathcal{H})$. Moreover, the following inclusions $\mathcal{B}_{A}(\mathcal{H})\subseteq\mathcal{B}_{A^{1/2}}(\mathcal{H})\subseteq
 \mathcal{B}(\mathcal{H})$ hold with equality if $A$ is injective and has closed range. For an account of results, we refer to \cite{acg1,acg2,kais01}.

It is useful to recall that an operator $T$ is called $A$-self-adjoint if $AT$ is self-adjoint (i.e. $AT=T^*A$) and it is called $A$-positive if $AT\geq0$.

\noindent Recently, several results covering some classes of operators on a Hilbert space $\mathcal{H}$ were extended when an additional semi-inner
product defined by $A\in \mathcal{B}(\mathcal{H})^+$ is considered. One may see \cite{BPN, BP, bakfeki01,bakfeki04,majsecesuci,zamani1}.

\noindent The generalization of the numerical range, known as $A$-numerical range (see \cite{bakfeki02}) is given by:
\[W_A(T) = \{\langle Tx\mid x\rangle_A: x\in \mathcal{H}, \|x\|_A=1\}.\]
  The $A$-numerical radius $\omega_A(T)$ and the $A$-Crawford number $m_A(T)$ of an operator $T$ are defined as:
\[ \omega_A(T)=\sup\{|\lambda|: \lambda \in W_A(T)\}, \]
\[m_A(T)=\inf\{|\lambda|: \lambda \in W_A(T)\}.\]
It is well-known that the $A$-numerical radius of an $A$-bounded operator $T$ is equivalent to $A$-operator semi-norm of $T$, (see \cite{zamani1}). More precisely, we have
\begin{equation}\label{equivalentsemi}
\tfrac{1}{2}\|T\|_A \leq \omega_A(T)\leq \|T\|_A.
\end{equation}
Zamani \cite{zamani1} studied $A$-numerical radius inequalities for semi-Hilbertian space operators. In \cite{BPN,BP}, we have also studied $\mathbb{A}$-numerical radius inequalities of $d\times d$ operator matrices where $\mathbb{A}$ is the $d\times d$ diagonal operator matrix whose diagonal entries are $A$. Notice that the study of numerical radius inequalities received considerable attention in the last decades (the reader is invited to consult for example \cite{BBP,BBP3} and the references therein). An operator $U\in  \mathcal{B}_A(\mathcal{H})$ is said to be $A$-unitary if $U^{\sharp_A} U=(U^{\sharp_A})^{\sharp_A} U^{\sharp_A}=P_A$. We mention that if $T\in \mathcal{B}_A(\mathcal{H})$ then $T^{\sharp_A}\in \mathcal{B}_A(\mathcal{H})$, $(T^{\sharp_A})^{\sharp_A}=P_ATP_A$.
Let $\mathbb{T}$ denote the unit cycle of the complex plane, i.e. $\mathbb{T}=\{\lambda\in \mathbb{C}\,:\;|\lambda|=1 \}$. Recently, new types of parallelism for $A$-bounded operators based on the $A$-numerical radius and the $A$-operator semi-norm was introduced in \cite{fekisidha2019}. More precisely, we have the following definition.
\begin{definition}(\cite{fekisidha2019})
Let $T,S\in \mathcal{B}_{A^{1/2}}(\mathcal{H})$.
\begin{itemize}
  \item [(1)]  We say that $T$ is $A$-norm-parallel to $S$, in short $T\parallel_A S$, if there exists $\lambda\in \mathbb{T}$ such that
$$\|T+\lambda S\|_A=\|T\|_A+\|S\|_A.$$
  \item [(2)] The operator $T$ is said to be $A$-numerical radius parallel to $S$ and we denote $T \parallel_{\omega_A} S$, if
\begin{align*}
\omega_A(T + \lambda S) = \omega_A(T)+\omega_A(S) \qquad \mbox{for some}\,\, \lambda\in\mathbb{T}.
\end{align*}
\end{itemize}
\end{definition}
\noindent Also, the following theorems are proved in \cite{fekisidha2019}.
\begin{theorem}\label{main1}
Let $T,S\in \mathcal{B}_{A^{1/2}}(\mathcal{H})$. Then, the following assertions are equivalent:
\begin{itemize}
  \item [(1)] $T\parallel_AS$.
  \item [(2)] There exists a sequence $(x_n)_n\subset\mathcal{H}$ such that $\|x_n\|_A=1$,
    \begin{equation*}
    \lim_{n\to +\infty}|\langle T x_n\mid Sx_n\rangle_A|=\|T\|_A\|S\|_A.
    \end{equation*}
\end{itemize}
\end{theorem}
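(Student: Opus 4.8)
The plan is to argue both implications directly from the supremum description of $\|\cdot\|_A$ in \eqref{aseminorm}, the whole computation resting on the single identity
\[
\|(T+\lambda S)x\|_A^2=\|Tx\|_A^2+\|Sx\|_A^2+2\,\mathrm{Re}\big(\lambda\langle Sx\mid Tx\rangle_A\big),\quad \lambda\in\mathbb{T},\;x\in\mathcal{H},
\]
which comes from expanding $\langle (T+\lambda S)x\mid (T+\lambda S)x\rangle_A$ by sesquilinearity and using $|\lambda|=1$ together with the Hermitian symmetry of $\langle\cdot\mid\cdot\rangle_A$. Before anything else I would dispose of the degenerate case $\|T\|_A\|S\|_A=0$: if, say, $\|S\|_A=0$ then $\|Sx\|_A=0$ for all $x$, so the triangle inequality forces $\|T+\lambda S\|_A=\|T\|_A$ for every $\lambda\in\mathbb{T}$, while $|\langle Tx_n\mid Sx_n\rangle_A|\le\|Tx_n\|_A\|Sx_n\|_A=0$ for any unit sequence; hence both (1) and (2) hold trivially. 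From now on assume $M:=\|T\|_A\|S\|_A>0$.

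For $(2)\Rightarrow(1)$, start from $(x_n)$ with $\|x_n\|_A=1$ and $|\langle Tx_n\mid Sx_n\rangle_A|\to M$. Since $|\langle Tx_n\mid Sx_n\rangle_A|\le\|Tx_n\|_A\|Sx_n\|_A\le M$ and the left-hand side tends to $M$, a squeezing argument (if a subsequence of $\|Tx_n\|_A$ stayed below $\|T\|_A$, the product would stay below $M$) forces $\|Tx_n\|_A\to\|T\|_A$ and $\|Sx_n\|_A\to\|S\|_A$. Writing $c_n=\langle Sx_n\mid Tx_n\rangle_A$ and using compactness of $\mathbb{T}$, I pass to a subsequence along which $c_n/|c_n|\to\nu\in\mathbb{T}$, so $c_n\to\nu M$; choosing $\lambda=\overline{\nu}$ gives $\mathrm{Re}(\lambda c_n)\to M$. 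The displayed identity then yields $\|(T+\lambda S)x_n\|_A^2\to\|T\|_A^2+\|S\|_A^2+2M=(\|T\|_A+\|S\|_A)^2$, whence $\|T+\lambda S\|_A\ge\|T\|_A+\|S\|_A$; the reverse inequality is the triangle inequality, so $T\parallel_A S$.

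For $(1)\Rightarrow(2)$, fix $\lambda\in\mathbb{T}$ with $\|T+\lambda S\|_A=\|T\|_A+\|S\|_A$. By \eqref{aseminorm} there is $(x_n)\subset\mathcal{H}$ with $\|x_n\|_A=1$ and $\|(T+\lambda S)x_n\|_A\to\|T\|_A+\|S\|_A$. From $\|(T+\lambda S)x_n\|_A\le\|Tx_n\|_A+\|Sx_n\|_A\le\|T\|_A+\|S\|_A$ I again get $\|Tx_n\|_A\to\|T\|_A$ and $\|Sx_n\|_A\to\|S\|_A$. Inserting this into the identity shows $\mathrm{Re}(\lambda\langle Sx_n\mid Tx_n\rangle_A)\to M$; since $|\lambda\langle Sx_n\mid Tx_n\rangle_A|=|\langle Tx_n\mid Sx_n\rangle_A|\le\|Tx_n\|_A\|Sx_n\|_A\le M$, a real part can approach $M$ only if the modulus does, so $|\langle Tx_n\mid Sx_n\rangle_A|\to M$, which is exactly (2).

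Both directions are thus elementary; the only places demanding care are the squeezing step (deducing convergence of each factor to its supremum from convergence of the product, where positivity of $M$ is used) and the subsequential phase selection $\lambda=\overline{\nu}$. I expect no serious obstacle, but a cleaner conceptual alternative is to transport the problem to the genuine Hilbert space $\mathbf{R}(A^{1/2})$ via Proposition \ref{prop_arias}: the map $T\mapsto\widetilde{T}$ is a linear isometry for $\|\cdot\|_A$ by Lemma \ref{preprintkais}(1), and $\widetilde{T}Z_Ax=ATx$, $\widetilde{S}Z_Ax=ASx$ give $(\widetilde{T}Z_Ax,\widetilde{S}Z_Ax)=\langle Tx\mid Sx\rangle_A$ with $\|Z_Ax\|_{\mathbf{R}(A^{1/2})}=\|x\|_A$ by \eqref{usefuleq001}--\eqref{usefuleq01}. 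This reduces the statement to the known characterization of norm-parallelism for Hilbert-space operators; there the only subtlety is that the extremal unit sequence in $\mathbf{R}(A^{1/2})$ must be chosen in the dense range $Z_A(\mathcal{H})=\mathcal{R}(A)$, so as to pull back to vectors $x_n\in\mathcal{H}$ with $\|x_n\|_A=1$.
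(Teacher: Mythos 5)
Your argument is correct. Note first that this paper does not actually prove Theorem \ref{main1}: it is quoted from \cite{fekisidha2019}, so there is no in-paper proof to compare against, and your submission is a genuine self-contained proof. The details check out: the expansion of $\|(T+\lambda S)x\|_A^2$ for $\lambda\in\mathbb{T}$ is the right identity; the degenerate case $\|T\|_A\|S\|_A=0$ is correctly disposed of; the squeeze deducing $\|Tx_n\|_A\to\|T\|_A$ and $\|Sx_n\|_A\to\|S\|_A$ from convergence of the product (respectively of the sum, in the converse direction) is valid; and the phase selection $\lambda=\overline{\nu}$ together with $\Re(z)\le|z|$ closes both implications, with only the Cauchy--Schwarz inequality for the positive semi-definite form $\langle\cdot\mid\cdot\rangle_A$ needed throughout. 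The alternative you sketch --- transporting the problem to the Hilbert space $\mathbf{R}(A^{1/2})$ via Proposition \ref{prop_arias} and Lemma \ref{preprintkais} --- is in fact the route this paper and \cite{fekisidha2019} systematically take elsewhere (compare the proof of Lemma \ref{l2}, which invokes the equivalence $T\parallel_A S\Leftrightarrow\widetilde{T}\parallel\widetilde{S}$ from \cite[Lemma 3.1]{fekisidha2019}); the density issue you flag there, namely that extremal unit vectors of $\mathbf{R}(A^{1/2})$ must be chosen in, or approximated from, $Z_A(\mathcal{H})=\mathcal{R}(A)$ in order to pull back to $A$-unit vectors of $\mathcal{H}$, is precisely the step requiring care in that reduction, and your elementary argument has the merit of bypassing it entirely.
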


\begin{theorem}\label{main2}
Let $T,S\in \mathcal{B}_{A^{1/2}}(\mathcal{H})$. Then the following conditions are equivalent:
\begin{itemize}
\item[(1)] $T \parallel_{\omega_A} S$.
\item[(2)] There exists a sequence of $A$-unit vectors $\{x_n\}$ in $\mathcal{H}$ such that
\begin{equation}\label{importanteqf}
\lim_{n\rightarrow+\infty} \big|\langle Tx_n\mid x_n\rangle_A\langle Sx_n\mid x_n\rangle_A\big| = \omega_A(T)\omega_A(S).
\end{equation}
\end{itemize}
\end{theorem}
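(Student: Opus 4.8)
The plan is to establish both implications directly from the definition of $\omega_A$ as the supremum of $|\langle \cdot\, x\mid x\rangle_A|$ over $A$-unit vectors, using only the triangle inequality together with a compactness argument on $\mathbb{T}$. The de Branges--Rovnyak correspondence of Lemma \ref{preprintkais}(2) would give an alternative route, by reducing the claim to the classical numerical-radius parallelism of $\widetilde{T}$ and $\widetilde{S}$ on $\mathbf{R}(A^{1/2})$; but the direct argument is shorter and self-contained, so I would carry that out.

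For $(1)\Rightarrow(2)$ I would start from a fixed $\lambda\in\mathbb{T}$ with $\omega_A(T+\lambda S)=\omega_A(T)+\omega_A(S)$ and choose $A$-unit vectors $(x_n)_n$ realizing the supremum, i.e.\ $|\langle(T+\lambda S)x_n\mid x_n\rangle_A|\to\omega_A(T)+\omega_A(S)$. Since $|\lambda|=1$, the chain
$$|\langle(T+\lambda S)x_n\mid x_n\rangle_A|\le|\langle Tx_n\mid x_n\rangle_A|+|\langle Sx_n\mid x_n\rangle_A|\le\omega_A(T)+\omega_A(S)$$
squeezes the middle sum to $\omega_A(T)+\omega_A(S)$; combined with the individual bounds $|\langle Tx_n\mid x_n\rangle_A|\le\omega_A(T)$ and $|\langle Sx_n\mid x_n\rangle_A|\le\omega_A(S)$, this forces each factor separately to its own supremum, which gives \eqref{importanteqf}.

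For the converse $(2)\Rightarrow(1)$ I would first clear away the degenerate case: if $\omega_A(T)=0$ or $\omega_A(S)=0$, then the reverse triangle inequality for the seminorm $\omega_A$ already yields $\omega_A(T+\lambda S)=\omega_A(T)+\omega_A(S)$ for every $\lambda\in\mathbb{T}$. Assuming $\alpha:=\omega_A(T)>0$ and $\beta:=\omega_A(S)>0$, I would write $\langle Tx_n\mid x_n\rangle_A=a_ne^{i\theta_n}$ and $\langle Sx_n\mid x_n\rangle_A=b_ne^{i\varphi_n}$ with $a_n,b_n\ge0$; from $a_n\le\alpha$, $b_n\le\beta$ and $a_nb_n\to\alpha\beta$ it follows that $a_n\to\alpha$ and $b_n\to\beta$. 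The phase differences $e^{i(\theta_n-\varphi_n)}$ lie in the compact set $\mathbb{T}$, so after passing to a subsequence I may assume $e^{i(\theta_n-\varphi_n)}\to e^{i\gamma}$; setting $\lambda=e^{i\gamma}$ and factoring out $e^{i\theta_n}$ gives
$$|\langle(T+\lambda S)x_n\mid x_n\rangle_A|=\big|a_n+\lambda b_ne^{-i(\theta_n-\varphi_n)}\big|\longrightarrow\alpha+\beta,$$
whence $\omega_A(T+\lambda S)\ge\alpha+\beta$; the triangle inequality supplies the reverse bound, so $T\parallel_{\omega_A}S$.

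The one genuinely delicate point I anticipate is the selection of a single $\lambda$ in the converse: the phase that optimally aligns the two terms depends on $n$, whereas $\lambda$ must be held constant. This is exactly what the compactness of $\mathbb{T}$ resolves, via the convergent-subsequence extraction above. All remaining steps are applications of the triangle inequality and elementary squeezing, so I expect that subsequence argument to be the crux of the proof.
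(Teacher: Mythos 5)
Your argument is correct in both directions: the squeeze in $(1)\Rightarrow(2)$ (using $a_n\le\omega_A(T)$, $b_n\le\omega_A(S)$, $a_n+b_n\to\omega_A(T)+\omega_A(S)$, hence $a_n\to\omega_A(T)$ and $b_n\to\omega_A(S)$, hence $a_nb_n\to\omega_A(T)\omega_A(S)$), the separate treatment of the degenerate case via the reverse triangle inequality for the seminorm $\omega_A$, and the compactness-of-$\mathbb{T}$ subsequence extraction that fixes a single $\lambda$ in $(2)\Rightarrow(1)$ are all sound, and you correctly identify the phase-selection step as the only delicate point. Note, however, that the paper itself does not prove Theorem \ref{main2}: it is imported verbatim from \cite{fekisidha2019}, so there is no in-text proof to compare against. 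Your direct argument is exactly the elementary route one would expect; the alternative you mention in passing --- transporting the problem to $\mathbf{R}(A^{1/2})$ via Lemma \ref{preprintkais}(2) and Proposition \ref{prop_arias} and invoking the known characterization of numerical radius parallelism for Hilbert space operators from \cite{mehamzamani} --- is the route most consistent with the machinery this paper uses elsewhere (e.g.\ in Lemmas \ref{l1} and \ref{l2}), and it buys brevity at the cost of an external reference, whereas your version is self-contained and makes visible where positivity of $\omega_A(T)$ and $\omega_A(S)$ is actually needed.
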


\noindent Recently, Zamani introduced in \cite{Z.3} the notion of $A$-Birkhoff-James orthogonality of operators in semi-Hilbertian spaces as follows.
\begin{definition}(\cite{Z.3})\label{de.31}
An element $T \in \mathcal{B}_{A^{1/2}}(\mathcal{H})$ is called an $A$-Birkhoff-James orthogonal
to another element $S \in \mathcal{B}_{A^{1/2}}(\mathcal{H})$, denoted by $T\perp^B_A S$, if
\begin{align*}
{\|T + \gamma S\|}_A\geq {\|T\|}_A \quad \mbox{for all} \,\, \gamma \in \mathbb{C}.
\end{align*}
\end{definition}

The paper is organized as follows: In the next section, we introduce and give a characterization of $A$-numerical radius orthogonality for $A$-bounded
operators. In particular, for $T, S \in \mathcal{B}_{A^{1/2}}(\mathcal{H})$, we show that $T$ is $A$-numerical radius orthogonal to $S$
if and only if for each $\beta\in [0,2\pi)$, there exists a sequence of $A$-unit vectors $\{x_k\}$ in $\mathcal{H}$ such that
$$
  \lim_{k\to +\infty}|\langle Tx_k\mid x_k\rangle_A|=\omega_A(T) \text{ and }\lim_{k\to +\infty}\Re e\left( e^{i\beta}\langle x_k\mid Tx_k\rangle_A \langle Sx_k\mid x_k\rangle_A\right)\geq 0.
$$
Furthermore, inspiring by the rank one operators in Hilbert spaces, we introduce the class of $A$-rank one operators in semi-Hilbert spaces. In addition, a characterization of the $A$-numerical radius parallelism of $A$-rank one operators is established. Our results cover and extend the works in \cite{malpaulsen,mehamzamani}.
In the last section, we give some inequalities for $\mathbb{A}$-numerical radius of semi-Hilbertian space operators which are as an application of $\mathbb{A}$-numerical radius orthogonality and parallelism. The obtained results generalize and improve on the existing inequalities.

\section{$A$-numerical radius orthogonality and parallelism }
\noindent In this section, we introduce and completely characterize the concept of orthogonality of $A$-bounded operators with respect to the $A$-numerical radius $\omega(\cdot)$. Also we give a characterization of $A$-numerical radius parallelism for $A$-rank one operators. First, let us introduce the notion of $A$-numerical radius orthogonality of operators in semi-Hilbertian spaces.
\begin{definition}
An element $T \in \mathcal{B}_{A^{1/2}}(\mathcal{H})$ is called an $A$-numerical radius orthogonal to another element $S \in \mathcal{B}_{A^{1/2}}(\mathcal{H})$, denoted by $T\perp_{\omega_A} S$, if
$$
\omega_A(T + \gamma S)\geq \omega_A(T) \quad \mbox{for all} \,\, \gamma \in \mathbb{C}.
$$
\end{definition}

\noindent In the following proposition we state some basic properties of $A$-numerical radius orthogonality. The proof follows immediately from the definition of $A$-numerical radius orthogonality of operators and hence it is omitted.
\begin{proposition}
Let $T,S\in \mathcal{B}_A(\mathcal{H})$. Then the following properties are equivalent.
\begin{itemize}
\item [(i)] $T\perp_{\omega_A} S$.
\item [(ii)] $T^{\sharp_A}\perp_{\omega_A} S^{\sharp_A}.$
\item [(iii)] $\alpha T\perp_{\omega_A}\beta S$ for all $\alpha,\beta \in \mathbb{C}\setminus \{0\}$.
\end{itemize}
\end{proposition}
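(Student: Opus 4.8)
The plan is to reduce both equivalences to two elementary homogeneity-type properties of the $A$-numerical radius. The first is scalar homogeneity, $\omega_A(\alpha R) = |\alpha|\,\omega_A(R)$ for every $\alpha\in\mathbb{C}$ and every $R\in\mathcal{B}_{A^{1/2}}(\mathcal{H})$, which is immediate from the definition of $W_A$ since $W_A(\alpha R)=\alpha\,W_A(R)$. The second is the invariance $\omega_A(R^{\sharp_A})=\omega_A(R)$ for every $R\in\mathcal{B}_A(\mathcal{H})$; I would obtain this by combining Lemma~\ref{preprintkais}, which gives $\omega_A(R)=\omega(\widetilde{R})$, with Lemma~\ref{lau}, which gives $\widetilde{R^{\sharp_A}}=(\widetilde{R})^*$, and the standard identity $\omega\big((\widetilde{R})^*\big)=\omega(\widetilde{R})$ valid for the ordinary numerical radius on $\mathbf{R}(A^{1/2})$. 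With these two facts in hand, both equivalences become purely formal manipulations.

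For $(\mathrm{i})\Leftrightarrow(\mathrm{iii})$, taking $\alpha=\beta=1$ in $(\mathrm{iii})$ immediately returns $(\mathrm{i})$. For the converse, I would fix $\alpha,\beta\in\mathbb{C}\setminus\{0\}$ and use scalar homogeneity to write, for arbitrary $\gamma\in\mathbb{C}$,
\[\omega_A(\alpha T + \gamma\beta S) = |\alpha|\,\omega_A\!\left(T + \tfrac{\gamma\beta}{\alpha}\, S\right), \qquad \omega_A(\alpha T) = |\alpha|\,\omega_A(T).\]
Assuming $(\mathrm{i})$, one has $\omega_A(T+\mu S)\geq\omega_A(T)$ for every $\mu\in\mathbb{C}$; since $\mu=\gamma\beta/\alpha$ ranges over all of $\mathbb{C}$ as $\gamma$ does (because $\beta/\alpha\neq 0$), the two displayed identities give $\omega_A(\alpha T+\gamma\beta S)\geq\omega_A(\alpha T)$ for all $\gamma$, i.e.\ $\alpha T\perp_{\omega_A}\beta S$.

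For $(\mathrm{i})\Leftrightarrow(\mathrm{ii})$, I would first record that $(\cdot)^{\sharp_A}$ is conjugate-linear, so that $(T+\bar{\gamma}S)^{\sharp_A}=T^{\sharp_A}+\gamma S^{\sharp_A}$. Combining this with the $\sharp_A$-invariance of $\omega_A$ yields
\[\omega_A(T^{\sharp_A}+\gamma S^{\sharp_A}) = \omega_A\big((T+\bar{\gamma}S)^{\sharp_A}\big) = \omega_A(T+\bar{\gamma}S), \qquad \omega_A(T^{\sharp_A})=\omega_A(T).\]
Hence $T^{\sharp_A}\perp_{\omega_A}S^{\sharp_A}$ reads $\omega_A(T+\bar{\gamma}S)\geq\omega_A(T)$ for all $\gamma\in\mathbb{C}$; since $\bar{\gamma}$ ranges over $\mathbb{C}$ precisely when $\gamma$ does, this is equivalent to $(\mathrm{i})$.

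The only genuinely delicate points here are bookkeeping ones: one must carry the complex conjugate produced by the conjugate-linearity of $\sharp_A$, and observe that the substitutions $\gamma\mapsto\bar{\gamma}$ and $\gamma\mapsto\gamma\beta/\alpha$ are harmless because each is a bijection of $\mathbb{C}$ onto itself. No convexity, compactness, or sequence argument is needed, so I expect no real obstacle beyond establishing the two homogeneity properties of $\omega_A$ cleanly, and these rest entirely on Lemmas~\ref{preprintkais} and~\ref{lau} already available in the excerpt.
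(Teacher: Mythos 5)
Your proof is correct. The paper actually omits the proof of this proposition, asserting that it ``follows immediately from the definition,'' so there is no argument to compare against; your write-up is the natural filling-in of that claim. The two ingredients you isolate are exactly what is needed and are both valid: scalar homogeneity of $\omega_A$ handles $(\mathrm{i})\Leftrightarrow(\mathrm{iii})$, and for $(\mathrm{i})\Leftrightarrow(\mathrm{ii})$ the identity $\omega_A(R^{\sharp_A})=\omega_A(R)$ (via Lemmas~\ref{preprintkais} and~\ref{lau} and $\omega(S^*)=\omega(S)$) together with the conjugate-linearity $(T+\bar{\gamma}S)^{\sharp_A}=T^{\sharp_A}+\gamma S^{\sharp_A}$ gives both implications at once, since $\gamma\mapsto\bar{\gamma}$ is a bijection of $\mathbb{C}$. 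It is worth noting that this second equivalence is not purely definitional --- it genuinely uses $\omega_A(R^{\sharp_A})=\omega_A(R)$ --- so your argument is, if anything, more honest than the paper's ``immediate'' dismissal.
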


\noindent In the next proposition, we give some connections between $A$-numerical radius orthogonality and $A$-Birkhoff-James orthogonality of operators. Recall
from \cite{kais01} that an operator $T\in\mathcal{B}_{A^{1/2}}(\mathcal{H})$ is said to be $A$-normaloid if $r_A(T)=\|T\|_A$, where
$$r_A(T)=\displaystyle\lim_{n\to+\infty}\|T^n\|_A^{\frac{1}{n}}.$$
Moreover, it was shown in \cite{kais01} that an operator $T\in\mathcal{B}_{A^{1/2}}(\mathcal{H})$ is $A$-normaloid if and only if $\omega_A(T)=\|T\|_A$.

Now we give the following proposition.
\begin{proposition}
Let $T,S\in \mathcal{B}_{A^{1/2}}(\mathcal{H})$. Then the following conditions hold:
\begin{itemize}
  \item [(1)] If $T$ is an $A$-normaloid operator, then $T\perp_{\omega_A} S\Rightarrow T\perp^B_A S$.
  \item [(2)] If $AT^2=0$, then $T\perp^B_A S\Rightarrow T\perp_{\omega_A} S$.
\end{itemize}
\end{proposition}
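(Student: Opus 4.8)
The plan is to handle the two implications separately, in each case playing the two-sided bound \eqref{equivalentsemi} between $\omega_A(\cdot)$ and $\|\cdot\|_A$ against the defining inequalities of the two orthogonalities.

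For part (1), I would assume $T$ is $A$-normaloid and $T\perp_{\omega_A} S$, and simply chain inequalities: for every $\gamma\in\mathbb{C}$,
$$\|T+\gamma S\|_A \geq \omega_A(T+\gamma S)\geq \omega_A(T)=\|T\|_A,$$
where the first step is the upper estimate in \eqref{equivalentsemi}, the second is the hypothesis $T\perp_{\omega_A}S$, and the final equality is the $A$-normaloid assumption $\omega_A(T)=\|T\|_A$. Since this says exactly ${\|T+\gamma S\|}_A\geq{\|T\|}_A$ for all $\gamma$, we obtain $T\perp^B_A S$, so (1) needs no further argument.

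For part (2), the crux is the identity $\omega_A(T)=\tfrac12\|T\|_A$, which I claim is forced by $AT^2=0$. Once it is available the conclusion is immediate: assuming $T\perp^B_A S$, for every $\gamma\in\mathbb{C}$ the lower estimate in \eqref{equivalentsemi} together with $A$-Birkhoff-James orthogonality gives
$$\omega_A(T+\gamma S)\geq \tfrac12\|T+\gamma S\|_A\geq \tfrac12\|T\|_A=\omega_A(T),$$
which is precisely $T\perp_{\omega_A}S$.

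It therefore remains to show that $AT^2=0$ implies $\omega_A(T)=\tfrac12\|T\|_A$, and this is where I expect the real work to lie. My approach is to transport the problem to the Hilbert space $\mathbf{R}(A^{1/2})$ through Proposition \ref{prop_arias}: from $Z_AT=\widetilde{T}Z_A$ one gets $\widetilde{T}^2 Z_A=Z_AT^2$, and $AT^2=0$ means $Z_A T^2=0$; since $Z_A$ has dense range in $\mathbf{R}(A^{1/2})$ (because $\mathcal{R}(A)$ is dense there) and $\widetilde{T}^2$ is continuous, this yields $\widetilde{T}^2=0$. Now I would invoke the classical fact that any Hilbert space operator $N$ with $N^2=0$ satisfies $\omega(N)=\tfrac12\|N\|$; one route is the well-known inequality $\omega(N)\leq \tfrac12\big(\|N\|+\|N^2\|^{1/2}\big)$ combined with the universal lower bound $\omega(N)\geq\tfrac12\|N\|$, and another is to use the $2\times2$ upper-triangular block form of $N$ relative to $\overline{\mathcal{R}(N)}\oplus\overline{\mathcal{R}(N)}^\perp$. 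Applying this to $N=\widetilde{T}$ and translating back through Lemma \ref{preprintkais}, which gives $\omega_A(T)=\omega(\widetilde{T})$ and $\|T\|_A=\|\widetilde{T}\|$, delivers $\omega_A(T)=\tfrac12\|T\|_A$. The main obstacle is thus the passage $AT^2=0\Rightarrow\widetilde{T}^2=0$, specifically justifying that $\widetilde{T}^2$ vanishes on all of $\mathbf{R}(A^{1/2})$ from its vanishing on $\mathcal{R}(Z_A)$, together with correctly importing the half-norm identity for square-zero operators.
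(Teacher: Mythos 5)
Your argument is correct and, in both implications, reproduces exactly the paper's chain of inequalities: the upper bound in \eqref{equivalentsemi} against $\omega_A$-orthogonality for part (1), and the lower bound in \eqref{equivalentsemi} against $A$-Birkhoff-James orthogonality for part (2). The only divergence is that for the identity $\omega_A(T)=\tfrac12\|T\|_A$ under $AT^2=0$ the paper simply cites \cite[Corollary 2.2]{kais01}, whereas you rederive it; your derivation is sound, since $Z_AT^2=\widetilde{T}^2Z_A$ together with the density of $\mathcal{R}(Z_A)=\mathcal{R}(A)$ in $\mathbf{R}(A^{1/2})$ forces $\widetilde{T}^2=0$, and the classical bound $\omega(N)\le\tfrac12\left(\|N\|+\|N^2\|^{1/2}\right)$ combined with $\omega(N)\ge\tfrac12\|N\|$ and Lemma \ref{preprintkais} then yields the identity.
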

\begin{proof}
\noindent (1)\;Notice first that since $T$ is an $A$-normaloid operator, then $\omega_A(T)=\|T\|_A$. Now, assume that $T\perp_{\omega_A} S$. This implies that $\omega_A(T+\lambda S)\geq \omega_A(T)$ for all $\lambda \in \mathbb{C}$. Hence, by taking into account \eqref{equivalentsemi}, we get
$$\|T+\lambda S\|_A\geq \omega_A(T+\lambda S)\geq \omega_A(T)=\|T\|_A,$$
for all $\lambda \in \mathbb{C}.$ Therefore, we deduce that $T\perp^B_A S$.
\par \vskip 0.1 cm \noindent (2)\;It was shown in \cite[Corollary 2.2.]{kais01} that if $AT^2=0,$ then $\omega_A(T)=\frac{1}{2}\|T\|_A$. Now, we assume that $T\perp^B_A S$. Then, $\|T+\lambda S\|_A\geq \|T\|_A$ for all $\lambda \in \mathbb{C}$. Moreover, by using \eqref{equivalentsemi}, we see that
$$\omega_A(T+\lambda S)\geq \tfrac{1}{2}\|T+\lambda S\|_A\geq \tfrac{1}{2}\|T\|_A=\omega_A(T),$$
for all $\lambda \in \mathbb{C}$. Thus, $T\perp_{\omega_A} S$ as required.
\end{proof}

\begin{remark}
In general, as it was point out in \cite{malpaulsen} that the above two notions of orthogonality are not equivalent.
\end{remark}

\noindent In the following theorem, we prove our first main result in this section, which  characterizes $A$-numerical radius orthogonality of $A$-bounded operators on complex Hilbert space.
\begin{theorem}\label{mainortho}
Let $T,S\in \mathcal{B}_{A^{1/2}}(\mathcal{H})$. Then, the following assertions are equivalent:
\begin{itemize}
  \item [(1)] $T\perp_{\omega_A} S$.
  \item [(2)] For each $\beta\in [0,2\pi)$, there exists a sequence of $A$-unit vectors $\{x_k\}$ in $\mathcal{H}$ such that
  \begin{equation}\label{perortho}
  \lim_{k\to +\infty}|\langle Tx_k\mid x_k\rangle_A|=\omega_A(T) \text{ and }\lim_{k\to +\infty}\Re e\left( e^{i\beta}\langle x_k\mid Tx_k\rangle_A \langle Sx_k\mid x_k\rangle_A\right)\geq 0.
  \end{equation}
\end{itemize}
\end{theorem}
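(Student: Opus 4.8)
The plan is to give a direct, self-contained argument resting on the elementary expansion of $|\langle (T+\gamma S)x\mid x\rangle_A|^2$ together with the one-sided behaviour at $t=0$ of the convex function $t\mapsto \omega_A(T+te^{i\beta}S)$. An alternative route would transport everything to the genuine Hilbert space $\mathbf{R}(A^{1/2})$ via Proposition~\ref{prop_arias} and Lemma~\ref{preprintkais} and then invoke the classical numerical-radius orthogonality theorem of \cite{malpaulsen,mehamzamani}; however, the direct route is cleaner because it avoids the density issue of approximating arbitrary unit vectors of $\mathbf{R}(A^{1/2})$ by vectors of the form $Ax$. The computational backbone, valid for every $A$-unit vector $x$, every $\beta\in[0,2\pi)$ and every $t\in\mathbb{R}$, is the identity
\begin{equation*}
|\langle (T+te^{i\beta}S)x\mid x\rangle_A|^2=|\langle Tx\mid x\rangle_A|^2+2t\,\Re e\big(e^{i\beta}\langle x\mid Tx\rangle_A\langle Sx\mid x\rangle_A\big)+t^2|\langle Sx\mid x\rangle_A|^2,
\end{equation*}
obtained by expanding the modulus and using $\overline{\langle Tx\mid x\rangle_A}=\langle x\mid Tx\rangle_A$. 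I will write $a(x)$, $b_\beta(x)$, $c(x)$ for the coefficient of each of the three summands.

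For the implication $(1)\Rightarrow(2)$ I would fix $\beta$ and exploit that $T\perp_{\omega_A}S$ gives $h(t):=\omega_A(T+te^{i\beta}S)^2\ge\omega_A(T)^2$ for all $t>0$. For each such $t$ I would choose an $A$-unit vector $x_t$ with $|\langle (T+te^{i\beta}S)x_t\mid x_t\rangle_A|^2>h(t)-t^2$, which is possible since $h(t)$ is a supremum. Substituting into the identity and using both $a(x_t)\le\omega_A(T)^2\le h(t)$ and the uniform bound $c(x_t)\le\omega_A(S)^2\le\|S\|_A^2$ (from \eqref{equivalentsemi}), a cancellation yields $b_\beta(x_t)>-\tfrac{t}{2}(1+\|S\|_A^2)$, so that $\liminf_{t\to0^+}b_\beta(x_t)\ge 0$. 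Reading the same inequality as a lower bound for $a(x_t)$, and using $|b_\beta(x_t)|\le\omega_A(T)\omega_A(S)$ together with the boundedness of $c(x_t)$, forces the correction terms to be $O(t)$ and hence $a(x_t)\to\omega_A(T)^2$. Setting $t_k=1/k$ and passing to a subsequence along which $b_\beta$ converges then produces a sequence of $A$-unit vectors satisfying both requirements in \eqref{perortho}.

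For the converse $(2)\Rightarrow(1)$, given $\gamma\in\mathbb{C}\setminus\{0\}$ I would write $\gamma=re^{i\beta}$ with $r>0$ and apply hypothesis $(2)$ with this particular $\beta$ to obtain a sequence $\{x_k\}$ with $a(x_k)\to\omega_A(T)^2$ and $b_\beta(x_k)\to L\ge 0$. Estimating $\omega_A(T+\gamma S)^2\ge|\langle (T+\gamma S)x_k\mid x_k\rangle_A|^2$, discarding the nonnegative term $r^2c(x_k)$, and letting $k\to+\infty$ gives $\omega_A(T+\gamma S)^2\ge\omega_A(T)^2+2rL\ge\omega_A(T)^2$; the case $\gamma=0$ being trivial, this establishes $T\perp_{\omega_A}S$.

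The hard part will be the forward direction, where one must upgrade the purely inequality-level hypothesis $\omega_A(T+\gamma S)\ge\omega_A(T)$ into simultaneous quantitative control of the size $|\langle Tx_k\mid x_k\rangle_A|$ and of the sign of the cross term $b_\beta(x_k)$. The delicate point is that $\omega_A(\cdot)$ is a supremum that need not be attained, so a naive differentiation of $h$ is unavailable; the remedy is precisely the $-t^2$ slack built into the choice of $x_t$ together with the uniform bounds on $b_\beta$ and $c$, which isolate the first-order contribution as $t\to0^+$ and let both limits in \eqref{perortho} be recovered after passing to a suitable subsequence.
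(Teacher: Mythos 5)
Your proposal is correct and follows essentially the same route as the paper: both arguments pick near-maximizing $A$-unit vectors for $T+\tfrac{e^{i\beta}}{n}S$ with a quadratic slack ($t^2$ in your notation, $\tfrac{1}{n^2}$ in the paper's), expand $|\langle (T+te^{i\beta}S)x\mid x\rangle_A|^2$, and isolate the first-order cross term to get the sign condition while squeezing $|\langle Tx_k\mid x_k\rangle_A|\to\omega_A(T)$, and the converse direction is identical. The only cosmetic difference is that you recover $a(x_t)\to\omega_A(T)^2$ directly from the squared expansion, whereas the paper uses a separate Cauchy--Schwarz/triangle estimate for the same conclusion.
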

\begin{proof}
$(1)\Longrightarrow(2):$ Assume that $T\perp_{\omega_A} S$. Then, $\omega_A(T+\lambda S)\geq \omega_A(T),$ for every $\lambda \in \mathbb{C}$. Let $\beta \in [0,2\pi)$. It follows from the definition of $\omega_A(\cdot)$ that, for every $n\in \mathbb{N}^*$, there exists a sequence of $A$-unit vectors $\{z_n\}$ in $\mathcal{H}$ such that
\begin{equation}\label{copiepaste}
|\langle Tz_n+\tfrac{e^{i\beta}}{n} Sz_n\mid z_n\rangle_A|>\omega_A(T)-\tfrac{1}{n^2}.
\end{equation}
Therefore, for all $n\in \mathbb{N}^*$ we have
$$(\omega_A(T)-\tfrac{1}{n^2})^2< |\langle Tz_n+\tfrac{e^{i\beta}}{n} Sz_n\mid z_n\rangle|^2.$$
This implies that
\begin{align*}
\omega^2_A(T)-\tfrac{2}{n^2}\omega_A(T)+\tfrac{1}{n^4}
 &< |\langle Tz_n\mid z_n\rangle_A|^2+\tfrac{1}{n^2} |\langle Sz_n\mid z_n\rangle_A|^2  \\
		& \;\;+ \tfrac{2}{n}\Re e\left( e^{i\beta}\langle z_n\mid Tz_n\rangle_A \langle Sz_n\mid z_n\rangle_A\right),
\end{align*}
which in turn yields that
\begin{align*}
\tfrac{n}{2} \left[\omega^2_A(T)-|\langle Tz_n\mid z_n\rangle_A|^2\right]
 &< \tfrac{1}{n}\omega_A(T)-\tfrac{1}{2n^3} +\tfrac{1}{2n} |\langle Sz_n\mid z_n\rangle_A|^2\\
		&\;\;+ \Re e\left( e^{i\beta}\langle z_n\mid Tz_n\rangle_A \langle Sz_n\mid z_n\rangle_A\right).
\end{align*}
Hence, we infer that
\begin{equation}\label{ifnecess}
\tfrac{1}{n}\omega_A(T)-\tfrac{1}{2n^3} +\tfrac{1}{2n}\|S\|_A^2 +\Re e\left( e^{i\beta}\langle z_n\mid Tz_n\rangle_A \langle Sz_n\mid z_n\rangle_A\right)>0,
\end{equation}
for all $n\in \mathbb{N}^*$. Moreover, since $T,S\in \mathcal{B}_{A^{1/2}}(\mathcal{H})$ and $\|z_n\|_A=1$, then by the Cauchy-Schwarz inequality it can be seen that $(\langle Tz_n\mid z_n\rangle_A)_n$ and $(\langle Sz_n\mid z_n\rangle_A)_n$ are bounded sequences of complex numbers. So, there exists a subsequence $(z_{n_k})_k$ of $(z_n)_n$ such that
$$\displaystyle\lim_{k\to+\infty}\Re e\left( e^{i\beta}\langle z_{n_k}\mid Tz_{n_k}\rangle_A \langle Sz_{n_k}\mid z_{n_k}\rangle_A\right)\;\text{ exists}.$$
 Now, consider the sequence $\{x_k\}$ such that $x_k=z_{n_k}$ for all $k$. Clearly, $\|x_k\|_A=1$ for all $k$. Moreover, by using \eqref{ifnecess} we get
$$\lim_{k\to+\infty}\Re e\left( e^{i\beta}\langle x_k\mid Tx_k\rangle_A \langle Sx_k\mid x_k\rangle_A\right)\geq 0,$$
as desired. On the other hand, by using the Cauchy-Schwarz inequality and taking into consideration \eqref{copiepaste} together with the fact that $n_k\geq k$ for all $k$, we obtain
\begin{align*}
|\langle Tx_k\mid x_k\rangle_A|
&\geq|\langle Tx_k+\tfrac{e^{i\beta}}{k} Sx_k\mid x_k\rangle_A|-\tfrac{1}{k}|\langle Sx_k\mid x_k\rangle_A| \\
 &>\omega_A(T)-\tfrac{1}{k^2}-\tfrac{1}{k}\|S\|_A.
\end{align*}
So, by letting $k\to+\infty$, we obtain $\displaystyle\lim_{k\to+\infty} |\langle Tx_k\mid x_k\rangle_A|\geq \omega_A(T)$. This immediately gives
$$\omega_A(T)=\lim_{k\to+\infty} |\langle Tx_k\mid x_k\rangle_A|,$$
as required. Thus, the assertion $(2)$ is proved.

$(2)\Longrightarrow(1):$ Let $\lambda \in \mathbb{C}$. Then, there exists some $\beta \in [0,2\pi)$ such that $\lambda =|\lambda|e^{i\beta}$. So, by hypothesis, there exists a sequence of $A$-unit vectors $\{x_k\}$ in $\mathcal{H}$ such that \eqref{perortho} holds. Hence, we see that
\begin{align*}
\omega^2_A(T+\lambda S)
&\geq \lim_{k\to+\infty}|\langle Tx_k+\lambda Sx_k\mid x_k\rangle_A|^2\\
&= \lim_{k\to +\infty}[|\langle Tx_k\mid x_k\rangle_A|^2+2|\lambda|\,\Re e\left( e^{i\beta}\langle x_k\mid Tx_k\rangle_A \langle Sx_k\mid x_k\rangle_A\right)\\
&\;\;+|\lambda|^2 |\langle Sx_k\mid x_k\rangle_A|^2]	\\
&\geq\lim_{k\to+\infty}|\langle Tx_k\mid x_k\rangle_A|^2\\
&=\omega^2_A(T).
\end{align*}
Thus, $\omega_A(T+\lambda A)\geq \omega_A(T)$ for every $\lambda \in \mathbb{C}$. Therefore, $T\perp_{\omega_A}S$. Hence, the proof of the theorem is complete.
\end{proof}
We would like to emphasize the following remark.
\begin{remark}
For every $\beta\in [0,2\pi)$, the limit of $\Re e\left( e^{i\beta}\langle z_n\mid Tz_n\rangle_A \langle Sz_n\mid z_n\rangle_A\right)$ need not exist in general for some sequence $\{z_n\}$ of $A$-unit vectors even if $A=I$. Indeed, let $\mathcal{H}=\ell_{\mathbb{N}^*}^2(\mathbb{C})$ and $(e_n)_{n\in \mathbb{N}^*}$ be the canonical basis of $\ell_{\mathbb{N}^*}^2(\mathbb{C})$. Assume that $\theta = 0$ and $A =T=I$. Consider the following operator
$$S: \ell_{\mathbb{N}^*}^2(\mathbb{C}) \to \ell_{\mathbb{N}^*}^2(\mathbb{C}),\;x=(x_n)_{n\in \mathbb{N}^*}\mapsto Sx=(y_n)_{n\in \mathbb{N}^*}, $$
such that $y_n = (-1)^n x_n$ for all $n\in \mathbb{N}^*$. Clearly, $S\in \mathcal{B}(\ell_{\mathbb{N}^*}^2(\mathbb{C}))$. Moreover,
$$\Re e\left(\langle Se_n\mid e_n \rangle\right) = (-1)^n.$$
\end{remark}

\noindent Now we aim to give a characterization of the $A$-numerical radius parallelism for special type of semi-Hilbert space operators which will be called $A$-rank one operators. This new class of operators is defined as follows.
\begin{definition}
 Let $x,y\in \mathcal{H}$, the $A$-rank one operator is denoted by $x\otimes_A y$, where $x\otimes_A y$ is the following map:
\begin{align*}
x\otimes_A y\colon \mathcal{H} & \rightarrow\mathcal{H}\\
z&\mapsto (x\otimes_A y)(z)=\langle z\mid y\rangle_Ax.
\end{align*}
\end{definition}

\noindent In order to characterize of the $A$-numerical radius parallelism for $A$-rank one operators, we need the following two lemmas. Before that, it is useful to recall
for every $x, y \in \mathcal{H}$, the rank one operator $x\otimes y$ verifies
\begin{equation}\label{rankone}
\|x\otimes y\|=\|x\| \|y\|\;\;\text{ and }\;\;\omega(x\otimes y)=\tfrac{1}{2}\left(|\langle x\mid y\rangle|+\|x\| \|y\|\right).
\end{equation}

\begin{lemma}\label{l1}
Let $x,y\in \mathcal{H}$. Then, the following properties hold.
\begin{itemize}
  \item [(1)] $\|x\otimes_A y\|_A=\|x\|_A \|y\|_A$.
  \item [(2)] $\omega_A(x\otimes_A y)=\tfrac{1}{2}\left(\langle x\mid y\rangle_A+\|x\|_A\|y\|_A \right).$
\end{itemize}
\end{lemma}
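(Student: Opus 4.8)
The plan is to reduce both identities to the corresponding facts for an ordinary rank one operator on the genuine Hilbert space $\mathbf{R}(A^{1/2})$, where the classical relations \eqref{rankone} are already available. Writing $T=x\otimes_A y$, the first step is to check that $T$ is $A$-bounded: since $\|Tz\|_A=|\langle z\mid y\rangle_A|\,\|x\|_A\le \|x\|_A\|y\|_A\,\|z\|_A$ by the Cauchy--Schwarz inequality for $\langle\cdot\mid\cdot\rangle_A$, we have $T\in\mathcal{B}_{A^{1/2}}(\mathcal{H})$ and $\|T\|_A\le \|x\|_A\|y\|_A$. Hence Proposition \ref{prop_arias} guarantees the existence of the induced operator $\widetilde{T}\in\mathcal{B}(\mathbf{R}(A^{1/2}))$, and Lemma \ref{preprintkais} reduces the problem to computing $\|\widetilde{T}\|_{\mathcal{B}(\mathbf{R}(A^{1/2}))}$ and $\omega(\widetilde{T})$.

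The key step is to identify $\widetilde{T}$ explicitly as the ordinary rank one operator $Ax\otimes Ay$ acting on $\mathbf{R}(A^{1/2})$ (here $Ax,Ay\in\mathcal{R}(A)\subseteq\mathbf{R}(A^{1/2})$). To see this, I would verify the intertwining relation $Z_AT=(Ax\otimes Ay)Z_A$ from Proposition \ref{prop_arias}: for every $w\in\mathcal{H}$, on one hand $Z_A(Tw)=A\big(\langle w\mid y\rangle_A x\big)=\langle w\mid y\rangle_A\,Ax$, while on the other hand, using \eqref{usefuleq001}, $(Ax\otimes Ay)(Z_Aw)=(Aw,Ay)\,Ax=\langle w\mid y\rangle_A\,Ax$. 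Since $Ax\otimes Ay$ is a bounded operator on $\mathbf{R}(A^{1/2})$ satisfying the same relation, the uniqueness assertion of Proposition \ref{prop_arias} forces $\widetilde{T}=Ax\otimes Ay$.

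Finally, I would combine this identification with the known formulas \eqref{rankone} for rank one operators on $\mathbf{R}(A^{1/2})$, with the isometry \eqref{usefuleq01}, which gives $\|Ax\|_{\mathbf{R}(A^{1/2})}=\|x\|_A$ and $\|Ay\|_{\mathbf{R}(A^{1/2})}=\|y\|_A$, and with $(Ax,Ay)=\langle x\mid y\rangle_A$ from \eqref{usefuleq001}. Then Lemma \ref{preprintkais}(1) together with \eqref{rankone} yields $\|T\|_A=\|Ax\otimes Ay\|=\|x\|_A\|y\|_A$, proving (1), while Lemma \ref{preprintkais}(2) together with \eqref{rankone} yields $\omega_A(T)=\omega(Ax\otimes Ay)=\tfrac12\big(|\langle x\mid y\rangle_A|+\|x\|_A\|y\|_A\big)$, proving (2) (the modulus being understood in the statement).

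The only delicate point I anticipate is the identification step: one must confirm that $Ax\otimes Ay$ is genuinely a bounded operator on the \emph{completed} space $\mathbf{R}(A^{1/2})$ and then invoke the uniqueness in Proposition \ref{prop_arias}; once this is secured, everything else is a routine transcription. A self-contained alternative, avoiding the passage to $\mathbf{R}(A^{1/2})$, is to prove (1) directly from the Cauchy--Schwarz estimate above, with equality attained at $z=y/\|y\|_A$ when $\|y\|_A\neq 0$, and to prove (2) by maximizing $|\langle Tz\mid z\rangle_A|=|\langle z\mid y\rangle_A|\,|\langle x\mid z\rangle_A|$ over $A$-unit vectors $z$, which is precisely the classical rank one numerical radius computation underlying \eqref{rankone}.
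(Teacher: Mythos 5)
Your proposal follows essentially the same route as the paper: establish $A$-boundedness via Cauchy--Schwarz, identify $\widetilde{x\otimes_A y}=Ax\otimes Ay$ through the intertwining relation of Proposition \ref{prop_arias}, and transfer the classical rank-one formulas \eqref{rankone} back via Lemma \ref{preprintkais}, \eqref{usefuleq001} and \eqref{usefuleq01}. Your remark that the inner product in item (2) should carry a modulus is a correct observation about the statement as printed.
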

\begin{proof}
\noindent (1)\;Let $z\in \mathcal{H}$. Then, by using the Cauchy-Schwarz inequality, we have
\begin{align*}
\|(x\otimes_A y)(z)\|_A
& =\|A^{1/2}(x\otimes_A y)(z)\|\\
& = \|\langle z\mid y\rangle_AA^{1/2}x\|\\
 &=|\langle A^{1/2}z\mid A^{1/2}y\rangle|\times \|x\|_A\\
  &\leq  \|x\|_A\|y\|_A\|z\|_A.
\end{align*}
Hence, $x\otimes_A y\in \mathcal{B}_{A^{1/2}}(\mathcal{H})$. So, by Proposition \ref{prop_arias} there exists a unique $\widetilde{x\otimes_A y}\in \mathcal{B}(\mathbf{R}(A^{1/2}))$ such that $Z_A(x\otimes_A y) =\widetilde{x\otimes_A y}Z_A$. On the other hand, by using \eqref{usefuleq001} we see that
\begin{align*}
Z_A(x\otimes_A y)(z)
&=\langle z\mid y\rangle_AAx \\
&=(Az,Ay)Ax \\
 &=(Ax\otimes Ay)(Az).
\end{align*}
for all $z\in \mathcal{H}$. Hence, $\widetilde{x\otimes_A y}=Ax\otimes Ay$, where $Ax\otimes Ay$ is defined as
\begin{align*}
Ax\otimes A y\colon \mathbf{R}(A^{1/2}) & \rightarrow\mathbf{R}(A^{1/2})\\
z&\mapsto (Ax\otimes Ay)(z)=(z, Ay)Ax.
\end{align*}
So, by using \eqref{rankone} we obtain
\begin{align*}
\|x\otimes_A y\|_A
&=\|\widetilde{x\otimes_A y}\|_{\mathcal{B}(\mathbf{R}(A^{1/2}))} \\
&=\|Ax\otimes Ay\|_{\mathcal{B}(\mathbf{R}(A^{1/2}))} \\
 &=\|Ax\|_{\mathbf{R}(A^{1/2})} \|Ay\|_{\mathbf{R}(A^{1/2})}\\
  &=\|x\|_A \|y\|_A.
\end{align*}
\par \vskip 0.1 cm \noindent (2)\;By applying Lemma \ref{preprintkais} together with \eqref{rankone} we get
\begin{align*}
\omega_A(x\otimes_A y)
& =\omega(\widetilde{x\otimes_A y}) \\
& =\omega(Ax\otimes Ay) \\
 &=\tfrac{1}{2}\left[(A x, Ay)+\|Ax\|_{\mathbf{R}(A^{1/2})}\|Ay\|_{\mathbf{R}(A^{1/2})} \right]\\
  &=\tfrac{1}{2}\left(\langle x\mid y\rangle_A+\|x\|_A\|y\|_A \right)(\text{ by } \eqref{usefuleq001}\text{ and } \eqref{usefuleq01}).
\end{align*}
\end{proof}

\begin{remark}
Very recently, as our work was in progress, the above lemma has been proved by Zamani in \cite{z2019}. Our proof here is different from his approach.
\end{remark}

\begin{lemma}\label{l2}
Let $x,y\in \mathcal{H}$. Then, the following properties are equivalent.
\begin{itemize}
  \item [(1)] $x\parallel_A y$ $($ i.e. $\|x+\lambda y\|_A=\|x\|_A+\|y\|_A$ for some $\lambda\in\mathbb{T}$ $)$.
  \item [(2)] $x\otimes_Ay \parallel_A I$.
\end{itemize}
\end{lemma}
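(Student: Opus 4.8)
The plan is to apply the general criterion for $A$-norm-parallelism, namely Theorem \ref{main1}, to the pair $T=x\otimes_A y$ and $S=I$, and then to reduce the resulting condition to an equality case of the Cauchy--Schwarz inequality for the semi-inner product. First I would record the three needed ingredients. Since $A\neq 0$ one has $\|I\|_A=1$; by Lemma \ref{l1}(1) one has $\|x\otimes_A y\|_A=\|x\|_A\|y\|_A$; and for every $z\in\mathcal{H}$,
\begin{equation*}
\langle (x\otimes_A y)z\mid z\rangle_A=\langle z\mid y\rangle_A\,\langle x\mid z\rangle_A .
\end{equation*}
Consequently, Theorem \ref{main1} shows that $x\otimes_A y\parallel_A I$ if and only if there is a sequence $(z_n)_n$ of $A$-unit vectors with
\begin{equation*}
\lim_{n\to+\infty}|\langle z_n\mid y\rangle_A|\,|\langle x\mid z_n\rangle_A|=\|x\|_A\|y\|_A .
\end{equation*}
Thus the lemma amounts to proving that this asymptotic saturation of a product of two Cauchy--Schwarz estimates is equivalent to the single equality $|\langle x\mid y\rangle_A|=\|x\|_A\|y\|_A$, which by the equality case of Cauchy--Schwarz is exactly $x\parallel_A y$.

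To carry out the reduction I would pass to the Hilbert space $\mathbf{R}(A^{1/2})$. Put $u=Ax$, $v=Ay$ and $w_n=Az_n$; by \eqref{usefuleq001} and \eqref{usefuleq01} these are genuine Hilbert-space vectors with $\|w_n\|_{\mathbf{R}(A^{1/2})}=1$, with $(w_n,v)=\langle z_n\mid y\rangle_A$ and $(u,w_n)=\langle x\mid z_n\rangle_A$, and with $\|u\|_{\mathbf{R}(A^{1/2})}=\|x\|_A$, $\|v\|_{\mathbf{R}(A^{1/2})}=\|y\|_A$, $(u,v)=\langle x\mid y\rangle_A$. The ordinary Cauchy--Schwarz inequality gives $|(w_n,v)|\le\|v\|_{\mathbf{R}(A^{1/2})}$ and $|(u,w_n)|\le\|u\|_{\mathbf{R}(A^{1/2})}$, so saturation of the product forces both $|(w_n,v)|\to\|v\|_{\mathbf{R}(A^{1/2})}$ and $|(u,w_n)|\to\|u\|_{\mathbf{R}(A^{1/2})}$. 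Before this step I would dispose of the trivial cases $\|x\|_A=0$ or $\|y\|_A=0$: in either one $Ax=0$ or $Ay=0$, hence $x\otimes_A y=0$, and both $x\otimes_A y\parallel_A I$ and $x\parallel_A y$ hold automatically.

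The heart of the argument, and the step I expect to be the main obstacle, is turning the first of these two limits into an asymptotic alignment of $w_n$ with $v$. Writing $P$ for the orthogonal projection of $\mathbf{R}(A^{1/2})$ onto $\mathrm{span}\{v\}$, one has $|(w_n,v)|=\|Pw_n\|\,\|v\|$, so $|(w_n,v)|\to\|v\|$ gives $\|Pw_n\|\to 1$ and hence $\|(I-P)w_n\|\to 0$; thus $w_n=c_n\,v/\|v\|+r_n$ with $|c_n|\to 1$ and $\|r_n\|\to 0$. Substituting into $(u,w_n)$ and letting $n\to+\infty$ yields $|(u,w_n)|\to|(u,v)|/\|v\|$, and comparing with $|(u,w_n)|\to\|u\|$ forces $|(u,v)|=\|u\|\,\|v\|$, that is, $x\parallel_A y$. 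The only delicate point here is that the two limits share the \emph{same} sequence $w_n$, so one must first extract the alignment from the $v$-estimate and only then read off the $u$-estimate.

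For the converse I would reverse this construction. If $x\parallel_A y$, then $|(u,v)|=\|u\|\,\|v\|$, so $u$ and $v$ are linearly dependent in $\mathbf{R}(A^{1/2})$; assuming $v\neq 0$, choose the unit vector $w=v/\|v\|$. Since $\mathcal{R}(A)$ is dense in $\mathbf{R}(A^{1/2})$ (a vector $A^{1/2}g$ orthogonal to all $Ah$ satisfies $A^{1/2}P_Ag=0$, forcing $P_Ag\in\mathcal{N}(A)\cap\overline{\mathcal{R}(A)}=\{0\}$), I can pick $z_n\in\mathcal{H}$ with $Az_n\to w$ and then normalize so that $\|z_n\|_A=\|Az_n\|_{\mathbf{R}(A^{1/2})}=1$. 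A direct computation then gives $|\langle z_n\mid y\rangle_A|\,|\langle x\mid z_n\rangle_A|=|(Az_n,v)|\,|(u,Az_n)|\to\|v\|\,\|u\|=\|y\|_A\|x\|_A$, so Theorem \ref{main1} yields $x\otimes_A y\parallel_A I$, completing the equivalence.
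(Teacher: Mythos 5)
Your proof is correct, but it follows a genuinely different route from the paper's. The paper's proof is a two-line reduction: it transfers the whole statement to the Hilbert space $\mathbf{R}(A^{1/2})$ using the facts that $T\parallel_A S$ iff $\widetilde{T}\parallel\widetilde{S}$ (\cite[Lemma 3.1]{fekisidha2019}), that $\widetilde{x\otimes_A y}=Ax\otimes Ay$ and $\widetilde{I}=I_{\mathbf{R}(A^{1/2})}$ (already computed in the proof of Lemma \ref{l1}), and that $x\parallel_A y$ iff $Ax\parallel Ay$ in $\mathbf{R}(A^{1/2})$; it then invokes \cite[Corollary 2.23]{Z.M.2}, which asserts precisely that a rank-one operator $u\otimes v$ on a Hilbert space (or Hilbert $C^*$-module) is norm-parallel to the identity iff $u\parallel v$. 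You instead apply Theorem \ref{main1} directly to the pair $(x\otimes_A y, I)$ and prove the resulting equivalence by hand via the saturation of two Cauchy--Schwarz inequalities along a common sequence, handling the alignment of $w_n$ with $v$ through the projection onto $\mathrm{span}\{v\}$; in effect you re-prove the content of \cite[Corollary 2.23]{Z.M.2} in this setting. What the paper's approach buys is brevity at the cost of an external citation; what yours buys is a self-contained argument whose only nontrivial input is Theorem \ref{main1}. All the steps you flag as delicate check out: the product of the two bounded factors converging to the product of their suprema does force each factor to converge (after excluding the degenerate cases $\|x\|_A=0$ or $\|y\|_A=0$, which you correctly dispose of), and the extraction of the alignment from the $v$-estimate before reading off the $u$-estimate is exactly the right order. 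One minor simplification in your converse: since $v=Ay$, the unit vector $v/\|v\|_{\mathbf{R}(A^{1/2})}$ equals $A(y/\|y\|_A)$, so you may take the constant sequence $z_n=y/\|y\|_A$ and avoid the density-of-$\mathcal{R}(A)$ argument altogether (though that argument is itself correct).
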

\begin{proof}
By using \eqref{usefuleq01}, one can observe that $x\parallel_A y$ if and only if $Ax\parallel A y$ (that is $Ax$ and $Ay$ are parallel on $\mathbf{R}(A^{1/2})$). On the other hand, it is well-known that for a given $T,S\in \mathcal{B}_{A^{1/2}}(\mathcal{H})$ we have $T\parallel_AS$ if and only if $\widetilde{T}\parallel \widetilde{S}$ (see \cite[Lemma 3.1]{fekisidha2019}). So, $x\otimes_Ay \parallel_A I$ if and only if $\widetilde{x\otimes_Ay} \parallel \widetilde{I}$. Since, $\widetilde{x\otimes_Ay}=Ax \otimes Ay$ and $\widetilde{I}=I_{\mathbf{R}(A^{1/2})}$, then we get the desired equivalence by applying \cite[Corollary 2.23]{Z.M.2}.
\end{proof}

Now, we have in a position to prove our second main result in this section.
\begin{theorem}
Let $x,y\in \mathcal{H}$. Then the following conditions are equivalent:
\begin{itemize}
\item[(1)] $x\parallel_A y$.
\item[(2)] $x\otimes_A x \parallel_{\omega_A} y\otimes_A y$.
\end{itemize}
\end{theorem}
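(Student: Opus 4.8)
The plan is to route both conditions through a single Cauchy--Schwarz equality for the semi-inner product and to exploit the machinery already assembled in Lemma \ref{l1} and Theorem \ref{main2}. The key preliminary observation is that an $A$-rank one operator of the form $x\otimes_A x$ is $A$-normaloid: by Lemma \ref{l1}(2),
$$\omega_A(x\otimes_A x)=\tfrac12\big(\langle x\mid x\rangle_A+\|x\|_A^2\big)=\|x\|_A^2,\qquad \omega_A(y\otimes_A y)=\|y\|_A^2.$$
These explicit values are what make the parallelism criterion effective for this class of operators.

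First I would apply Theorem \ref{main2} to the pair $(x\otimes_A x,\,y\otimes_A y)$, both of which lie in $\mathcal{B}_{A^{1/2}}(\mathcal{H})$ by Lemma \ref{l1}. Since $(x\otimes_A x)z=\langle z\mid x\rangle_A\,x$, a direct computation gives $\langle (x\otimes_A x)z_k\mid z_k\rangle_A=|\langle z_k\mid x\rangle_A|^2$ and likewise $\langle (y\otimes_A y)z_k\mid z_k\rangle_A=|\langle z_k\mid y\rangle_A|^2$. Substituting these together with the two $A$-numerical radii above into \eqref{importanteqf}, the criterion collapses to the following: $x\otimes_A x\parallel_{\omega_A}y\otimes_A y$ holds if and only if there is a sequence of $A$-unit vectors $\{z_k\}$ with
$$\lim_{k\to+\infty}|\langle z_k\mid x\rangle_A|\,|\langle z_k\mid y\rangle_A|=\|x\|_A\,\|y\|_A.$$
Thus the operator-theoretic statement is reduced to a statement about the three vectors $x,y,z_k$ and the form $\langle\cdot\mid\cdot\rangle_A$.

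It then remains to prove that this limiting relation is equivalent to $x\parallel_A y$. I would first record that $x\parallel_A y$ is equivalent to the $A$-Cauchy--Schwarz equality $|\langle x\mid y\rangle_A|=\|x\|_A\|y\|_A$: indeed, by \eqref{usefuleq001} and \eqref{usefuleq01} one has $x\parallel_A y$ exactly when $Ax\parallel Ay$ in the Hilbert space $\mathbf{R}(A^{1/2})$, and in a genuine inner product space two vectors are norm-parallel precisely when Cauchy--Schwarz holds with equality. The easy implication of the final equivalence is then immediate: assuming $|\langle x\mid y\rangle_A|=\|x\|_A\|y\|_A$ (the degenerate cases $\|x\|_A=0$ or $\|y\|_A=0$ being trivial, as one of the operators then vanishes), the constant sequence $z_k=x/\|x\|_A$ yields $|\langle z_k\mid x\rangle_A|=\|x\|_A$ and $|\langle z_k\mid y\rangle_A|=|\langle x\mid y\rangle_A|/\|x\|_A=\|y\|_A$, so the product equals $\|x\|_A\|y\|_A$.

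The main obstacle is the converse implication, because the $A$-unit vectors $z_k$ need not converge in any sense. I would overcome this by transporting the limiting relation to $\mathbf{R}(A^{1/2})$ via $Z_A$: setting $u=Ax$, $v=Ay$ and $w_k=Az_k$ (so that $\|w_k\|_{\mathbf{R}(A^{1/2})}=\|z_k\|_A=1$ by \eqref{usefuleq01}), the hypothesis becomes $|(w_k,u)|\,|(w_k,v)|\to\|u\|\,\|v\|$. Since each factor is bounded above by its Cauchy--Schwarz ceiling, both must saturate, i.e. $|(w_k,u)|\to\|u\|$ and $|(w_k,v)|\to\|v\|$. The first of these forces the component of $w_k$ orthogonal to $u$ to vanish, so $w_k=\beta_k u+r_k$ with $\|r_k\|\to0$; feeding this into $(w_k,v)=\beta_k(u,v)+(r_k,v)$ and using $|(w_k,v)|\to\|v\|$ together with $\|r_k\|\to0$ yields $|(u,v)|=\|u\|\|v\|$. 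Translating back through \eqref{usefuleq001}, this is exactly $|\langle x\mid y\rangle_A|=\|x\|_A\|y\|_A$, i.e. $x\parallel_A y$, which closes the chain of equivalences.
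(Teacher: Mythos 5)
Your proof is correct, and its first half shares the paper's skeleton: both invoke Theorem \ref{main2} together with Lemma \ref{l1} and the identity $\langle (x\otimes_A x)z\mid z\rangle_A\,\langle (y\otimes_A y)z\mid z\rangle_A=|\langle z\mid x\rangle_A|^2\,|\langle z\mid y\rangle_A|^2$ to reduce $x\otimes_A x\parallel_{\omega_A}y\otimes_A y$ to the existence of $A$-unit vectors $z_k$ with $|\langle z_k\mid x\rangle_A||\langle z_k\mid y\rangle_A|\to\|x\|_A\|y\|_A$. Where you genuinely diverge is in tying this scalar condition back to $x\parallel_A y$. The paper routes through Lemma \ref{l2} (the equivalence $x\parallel_A y\Leftrightarrow x\otimes_A y\parallel_A I$, which rests on the external result \cite[Corollary 2.23]{Z.M.2}) and then through Theorem \ref{main1} applied to the pair $(x\otimes_A y, I)$, using $\|x\otimes_A y\|_A=\|x\|_A\|y\|_A$. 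You instead identify $x\parallel_A y$ with the Cauchy--Schwarz equality $|\langle x\mid y\rangle_A|=\|x\|_A\|y\|_A$ via the transport $x\mapsto Ax$ into $\mathbf{R}(A^{1/2})$, exhibit the explicit witness $z_k=x/\|x\|_A$ for one direction, and for the converse run a saturation argument (each factor must attain its Cauchy--Schwarz ceiling) followed by the decomposition $w_k=\beta_k u+r_k$ with $\|r_k\|\to 0$. Your route buys self-containedness --- no appeal to Lemma \ref{l2}, Theorem \ref{main1}, or the Zamani--Moslehian corollary --- and a fully written-out converse, which the paper dismisses with ``the same computations as above''; the paper's route is shorter given the lemmas already in place and makes visible the intermediate operator $x\otimes_A y$ whose norm-parallelism to $I$ is the structural reason for the equivalence. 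Only cosmetic points remain on your side: state (as you do for the forward direction) that the degenerate case $\|x\|_A\|y\|_A=0$ is also trivial in the converse, and note that $(Az_k,Ax)=\langle z_k\mid x\rangle_A$ by \eqref{usefuleq001}, which justifies the transport; both are immediate.
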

\begin{proof}
$(1)\Rightarrow(2):$ Suppose that $x\parallel_A y$. Then, in view of Lemma \ref{l2}, we have $x\otimes_A y\parallel_A I$. So, by Theorem \ref{main1} there exists a sequence of $A$-unit vectors $\{x_n\}$ in $\mathcal{H}$ such that
 \begin{align}\label{new}
\lim_{n\rightarrow+\infty} \big|\langle (x\otimes_A y)x_n\mid x_n\rangle_A\big| = \|x\otimes_A y\|_A.
\end{align}
On the other hand, by Lemma \ref{l1} we have
 \begin{align}\label{marj1}
\omega_A(x\otimes_A x)\omega_A(y\otimes_A y) =\|x\|_A^2\|y\|_A^2 = \|x\otimes_A y\|_A^2.
 \end{align}
Moreover, a short calculation shows that
\begin{align}\label{marj2}
\langle (x\otimes_A x)x_n\mid x_n\rangle_A\langle(y\otimes_A y)x_n\mid x_n\rangle_A
&=\big|\langle x\mid x_n\rangle_A\langle x_n\mid y\rangle_A\big|^2\nonumber\\
&= \big|\langle (x\otimes_A y)x_n\mid x_n\rangle_A\big|^2
\end{align}
So, by combining \eqref{new} together with \eqref{marj1} and \eqref{marj2}, we get
 \begin{align}\label{marjjdid}
\lim_{n\rightarrow+\infty}\Big|\langle (x\otimes_A x)x_n\mid x_n\rangle_A\langle(y\otimes_A y)x_n\mid x_n\rangle_A\Big|
=\omega_A(x\otimes_A x)\omega_A(y\otimes_A y).
\end{align}
Therefore, by Theorem \ref{main2} we deduce that $x\otimes_A x \parallel_{\omega_A} y\otimes_A y$ as required.

$(2)\Rightarrow(1):$ Assume that $x\otimes_A x\parallel_{\omega_A} y\otimes_A y$. It follows from Theorem \ref{main2} that there exists a sequence of $A$-unit vectors $\{x_n\}$ in $\mathcal{H}$ such that \eqref{marjjdid} holds. So, by making the same computations as above and applying Theorem \ref{main1} together with
Lemma \ref{l2} we can easily show that $x\parallel_A y$. This completes the proof of the theorem.
\end{proof}

In the following theorem, we study the connection between $A$-numerical radius parallelism and $A$-semi-norm parallelism.
\begin{theorem}
Let $T_{12},T_{21}\in \mathcal{B}_A(\mathcal{H})$ be $A$-normaloid operators. If $T_{12} \parallel_{\omega_A} T_{21}$, then $T_{12} \parallel_{A} T_{21}$.
\end{theorem}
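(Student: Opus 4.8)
The plan is to translate both hypotheses into the sequential criteria supplied by Theorems \ref{main2} and \ref{main1}, and then to bridge them by an asymptotic Cauchy--Schwarz argument. If $\|T_{12}\|_A=0$ or $\|T_{21}\|_A=0$, the corresponding operator has vanishing $A$-seminorm and $T_{12}\parallel_A T_{21}$ holds trivially (the choice $\lambda=1$ gives $\|T_{12}+\lambda T_{21}\|_A=\|T_{12}\|_A+\|T_{21}\|_A$), so I may assume $\|T_{12}\|_A>0$ and $\|T_{21}\|_A>0$. Since $T_{12}$ and $T_{21}$ are $A$-normaloid, we have $\omega_A(T_{12})=\|T_{12}\|_A$ and $\omega_A(T_{21})=\|T_{21}\|_A$. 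Applying Theorem \ref{main2} to the hypothesis $T_{12}\parallel_{\omega_A}T_{21}$ then furnishes a sequence $\{x_n\}$ of $A$-unit vectors with
\begin{equation*}
\lim_{n\to+\infty}\big|\langle T_{12}x_n\mid x_n\rangle_A\big|\,\big|\langle T_{21}x_n\mid x_n\rangle_A\big|=\|T_{12}\|_A\,\|T_{21}\|_A.
\end{equation*}

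Next I would exploit that each factor is bounded above by the associated seminorm. Writing $p_n=|\langle T_{12}x_n\mid x_n\rangle_A|\le\|T_{12}\|_A$ and $q_n=|\langle T_{21}x_n\mid x_n\rangle_A|\le\|T_{21}\|_A$, the convergence $p_nq_n\to\|T_{12}\|_A\|T_{21}\|_A$ together with the estimate $p_n\|T_{21}\|_A\ge p_nq_n$ forces $p_n\to\|T_{12}\|_A$ and, symmetrically, $q_n\to\|T_{21}\|_A$. The Cauchy--Schwarz chain $p_n\le\|T_{12}x_n\|_A\le\|T_{12}\|_A$ then yields $\|T_{12}x_n\|_A\to\|T_{12}\|_A$, and likewise $\|T_{21}x_n\|_A\to\|T_{21}\|_A$. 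Setting $c_n=\langle T_{12}x_n\mid x_n\rangle_A$ and using $\|x_n\|_A=1$, the identity
\begin{equation*}
\|T_{12}x_n-c_nx_n\|_A^2=\|T_{12}x_n\|_A^2-|c_n|^2\longrightarrow 0
\end{equation*}
shows that $T_{12}x_n$ is asymptotically the scalar multiple $c_nx_n$ in the $A$-seminorm; an identical computation gives $T_{21}x_n-d_nx_n\to 0$ with $d_n=\langle T_{21}x_n\mid x_n\rangle_A$.

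Finally I would reconstruct the cross term. Substituting $T_{12}x_n=c_nx_n+\varepsilon_n$ and $T_{21}x_n=d_nx_n+\delta_n$, where $\|\varepsilon_n\|_A,\|\delta_n\|_A\to 0$, and expanding $\langle T_{12}x_n\mid T_{21}x_n\rangle_A$ while controlling the three error contributions by Cauchy--Schwarz, one finds $\langle T_{12}x_n\mid T_{21}x_n\rangle_A=c_n\overline{d_n}+o(1)$. Since $|c_n|\to\|T_{12}\|_A$ and $|d_n|\to\|T_{21}\|_A$, this gives
\begin{equation*}
\lim_{n\to+\infty}\big|\langle T_{12}x_n\mid T_{21}x_n\rangle_A\big|=\|T_{12}\|_A\,\|T_{21}\|_A,
\end{equation*}
whence $T_{12}\parallel_A T_{21}$ by Theorem \ref{main1}. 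The principal obstacle I anticipate is precisely this middle passage, from the diagonal data $\langle T_{12}x_n\mid x_n\rangle_A$ and $\langle T_{21}x_n\mid x_n\rangle_A$ to the genuine inner product $\langle T_{12}x_n\mid T_{21}x_n\rangle_A$. It succeeds only because $A$-normaloidness upgrades near-maximality of the numerical values into near-equality in Cauchy--Schwarz, pinning both $T_{12}x_n$ and $T_{21}x_n$ asymptotically along the common direction $x_n$; absent this rigidity, the product of the diagonal entries would carry no information about the cross term.
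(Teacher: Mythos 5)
Your argument is correct, but it takes a much longer road than the paper does. The paper's proof is a two-line sandwich: writing $\omega_A(T_{12}+\lambda T_{21})=\omega_A(T_{12})+\omega_A(T_{21})$ for the witnessing $\lambda\in\mathbb{T}$, normaloidness converts the right-hand side into $\|T_{12}\|_A+\|T_{21}\|_A$, and then
\[
\|T_{12}\|_A+\|T_{21}\|_A=\omega_A(T_{12}+\lambda T_{21})\leq\|T_{12}+\lambda T_{21}\|_A\leq\|T_{12}\|_A+\|T_{21}\|_A
\]
forces equality throughout, using only $\omega_A(\cdot)\leq\|\cdot\|_A$ from \eqref{equivalentsemi} and the triangle inequality --- no sequences, no characterization theorems. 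You instead pass through the sequential criteria of Theorems \ref{main2} and \ref{main1} and bridge them with a Cauchy--Schwarz rigidity argument; every step checks out (the degenerate cases, the factorwise convergence $p_n\to\|T_{12}\|_A$, $q_n\to\|T_{21}\|_A$, the identity $\|T_{12}x_n-c_nx_n\|_A^2=\|T_{12}x_n\|_A^2-|c_n|^2$, and the error control in the cross term all work verbatim for the semi-inner product). What your route buys is extra structural information: it exhibits a single sequence of $A$-unit vectors that simultaneously witnesses both parallelisms and shows that $T_{12}x_n$ and $T_{21}x_n$ asymptotically align with $x_n$, which is the genuine content of normaloidness here. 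What it costs is length, and it obscures the one thing the paper's proof makes transparent: the same unimodular $\lambda$ that witnesses $\parallel_{\omega_A}$ also witnesses $\parallel_A$. For this statement the direct inequality chain is the better proof, but your rigidity lemma is worth keeping in mind as a reusable tool.
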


\begin{proof}
Since $T_{12} \|_{\omega_A} T_{21}$, then there exists some $\lambda \in \mathbb{T}$ such that $\omega_{A}(T_{12}+ \lambda T_{21})=\omega_A(T_{12})+\omega_A(T_{21})$. In addition, since $T_{12}$ and $T_{21}$ are $A$-normaloid, then we have $\omega_A(T_{12})=\|T_{12}\|_A$ and $\omega_A(T_{21})=\|T_{21}\|_A$. Therefore, we obtain
\begin{align*}
\|T_{12}\|_A+\|T_{21}\|_A
&= \omega_A(T_{12})+\omega_A(T_{21})\\
&= \omega_{A}(T_{12}+ \lambda T_{21})\\
&\leq  \|T_{12}+ \lambda T_{21}\|_A\\
&\leq \|T_{12}\|_A+\|T_{21}\|_A.
\end{align*}
This implies that $\|T_{12}+ \lambda T_{21}\|_A=\|T_{12}\|_A+\|T_{21}\|_A$ for some $\lambda \in \mathbb{T}$ and so $T_{12} \|_{A} T_{21}$ as required.
\end{proof}

\section{Application: some $A$-numerical radius inequalities}

\noindent In this section, we present some applications of the $A$-numerical radius parallelism and the $A$-numerical radius orthogonality. In particular, we will prove some inequalities for the $A$-numerical radius of semi-Hilbertian space operators. In order to achieve the goals of this section, we need some results.

In all what follows, we consider the Hilbert space $\mathbb{H}=\oplus_{i=1}^d\mathcal{H}$ equipped with the following inner-product:
$$\langle x, y\rangle=\sum_{k=1}^d\langle x_k\mid y_k\rangle,$$
 for all $x=(x_1,\cdots,x_d)\in \mathbb{H}$ and $y=(y_1,\cdots,y_d)\in \mathbb{H}$. Let $\mathbb{A}$ be a ${d\times d}$ operator diagonal matrix with diagonal entries are the positive operator $A$, i.e.
 \begin{equation*}
\mathbb{A}=\begin{pmatrix}A & 0 &\cdots& 0\\
0& A &\cdots& 0\\
\vdots & \vdots & \vdots & \vdots\\
0 & 0 &\cdots& A
\end{pmatrix}\in \mathcal{B}(\mathbb{H})^+.
\end{equation*}
 Then, $\mathbb{A}$ defines the following positive semi-definite sesquilinear form
\begin{align*}
\langle x,y\rangle_{\mathbb{A}}=\langle \mathbb{A}x, y\rangle=\sum_{k=1}^d\langle Ax_k\mid y_k\rangle=\sum_{k=1}^d\langle x_k\mid y_k\rangle_A,
\end{align*}
 for all $x=(x_1,\cdots,x_d),y=(y_1,\cdots,y_d)\in \mathbb{H}$.
We begin with the following lemma.
\begin{lemma}\label{lemmajdid01}
 Let $\mathbb{T}= (T_{ij})_{d \times d}$ be such that $T_{ij}\in \mathcal{B}_A(\mathcal{H})$ for all $i,j$. Then, $\mathbb{T}\in\mathcal{B}_{\mathbb{A}}(\mathbb{H})$. Moreover, we have
\begin{equation}\label{sharpmatrix}
\begin{pmatrix}
T_{11} & T_{12} &\cdots& T_{1d}\\
T_{21} & T_{22} &\cdots& T_{2d}\\
\vdots & \vdots & \vdots & \vdots\\
T_{d1} & T_{d2} &\cdots& T_{dd}\\
\end{pmatrix}^{\sharp_{\mathbb{A}}}=\begin{pmatrix}
T_{11}^{\sharp_A} &T_{21}^{\sharp_A}  &\cdots&T_{d1}^{\sharp_A} \\
 T_{12}^{\sharp_A} &   T_{22}^{\sharp_A} &\cdots& T_{d2}^{\sharp_A}\\
\vdots & \vdots & \vdots & \vdots\\
T_{1d}^{\sharp_A}  &   T_{2d}^{\sharp_A} &\cdots& T_{dd}^{\sharp_A}\\
\end{pmatrix}.
\end{equation}
\end{lemma}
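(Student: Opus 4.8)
The plan is to exhibit an explicit candidate for the $\mathbb{A}$-adjoint of $\mathbb{T}$, namely the block matrix $\mathbb{S}=(T_{ji}^{\sharp_A})_{i,j}$ displayed on the right-hand side of \eqref{sharpmatrix}, verify directly that it is an $\mathbb{A}$-adjoint of $\mathbb{T}$ (which already secures $\mathbb{T}\in\mathcal{B}_{\mathbb{A}}(\mathbb{H})$ by the very definition of that algebra), and then check that $\mathbb{S}$ is the \emph{distinguished} solution, i.e. $\mathbb{S}=\mathbb{T}^{\sharp_{\mathbb{A}}}$.

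First I would establish membership in $\mathcal{B}_{\mathbb{A}}(\mathbb{H})$. Writing a generic vector of $\mathbb{H}$ as $x=(x_1,\dots,x_d)$ and expanding the $\mathbb{A}$-semi-inner product block-wise via the formula $\langle x,y\rangle_{\mathbb{A}}=\sum_k\langle x_k\mid y_k\rangle_A$, and using that each $T_{ij}$ admits the $A$-adjoint $T_{ij}^{\sharp_A}$ so that $\langle T_{ij}x_j\mid y_i\rangle_A=\langle x_j\mid T_{ij}^{\sharp_A}y_i\rangle_A$, a short double-sum manipulation gives
\[
\langle \mathbb{T}x,y\rangle_{\mathbb{A}}=\sum_{i,j}\langle T_{ij}x_j\mid y_i\rangle_A=\sum_{i,j}\langle x_j\mid T_{ij}^{\sharp_A}y_i\rangle_A=\langle x,\mathbb{S}y\rangle_{\mathbb{A}},
\]
where reading off the indices shows that the $(j,i)$ block of $\mathbb{S}$ is exactly $T_{ij}^{\sharp_A}$, i.e. the $(i,j)$ block is $T_{ji}^{\sharp_A}$, matching the claimed transposed pattern. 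This identity says precisely that $\mathbb{S}$ is an $\mathbb{A}$-adjoint of $\mathbb{T}$, whence $\mathbb{T}\in\mathcal{B}_{\mathbb{A}}(\mathbb{H})$.

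The delicate point, and where I expect the main (if modest) obstacle to lie, is that $\mathbb{A}$-adjoints are \emph{not unique} when $\mathbb{A}$ fails to be injective, so producing some $\mathbb{A}$-adjoint does not by itself identify $\mathbb{S}$ with the reduced solution $\mathbb{T}^{\sharp_{\mathbb{A}}}$. To pin this down I would invoke the characterization of $\mathbb{T}^{\sharp_{\mathbb{A}}}$ as the unique $\mathbb{A}$-adjoint whose range is contained in $\overline{\mathcal{R}(\mathbb{A})}$. Since $T_{ij}^{\sharp_A}=A^{\dagger}T_{ij}^{*}A$, each block of $\mathbb{S}$ has range inside $\mathcal{R}(A^{\dagger})=\overline{\mathcal{R}(A)}$; and because $\overline{\mathcal{R}(\mathbb{A})}=\overline{\mathcal{R}(A)}\oplus\cdots\oplus\overline{\mathcal{R}(A)}$ for this block-diagonal $\mathbb{A}$, it follows that $\mathcal{R}(\mathbb{S})\subseteq\overline{\mathcal{R}(\mathbb{A})}$, and therefore $\mathbb{S}=\mathbb{T}^{\sharp_{\mathbb{A}}}$.

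Alternatively, and perhaps more cleanly, I could bypass the range argument by computing $\mathbb{T}^{\sharp_{\mathbb{A}}}=\mathbb{A}^{\dagger}\mathbb{T}^{*}\mathbb{A}$ directly from the identity $T^{\sharp_A}=A^{\dagger}T^{*}A$ recalled in Section~\ref{s1}. The single fact to secure here is that $\mathbb{A}^{\dagger}=\mathrm{diag}(A^{\dagger},\dots,A^{\dagger})$, which follows by verifying the four Moore--Penrose equations block-wise, each reducing to the corresponding identity for $A$. Granting this, the $(i,j)$ block of $\mathbb{A}^{\dagger}\mathbb{T}^{*}\mathbb{A}$ equals $A^{\dagger}(\mathbb{T}^{*})_{ij}A=A^{\dagger}T_{ji}^{*}A=T_{ji}^{\sharp_A}$, which is exactly the asserted pattern. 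I would present whichever of these two routes keeps the index bookkeeping lightest; both rely only on facts recalled earlier in the paper.
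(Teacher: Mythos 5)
Your proposal is correct, and its core computation coincides with the paper's: both arguments take the explicit block matrix $\mathbb{S}=(T_{ji}^{\sharp_A})_{ij}$, verify that it satisfies the adjoint relation (your sesquilinear identity $\langle \mathbb{T}x,y\rangle_{\mathbb{A}}=\langle x,\mathbb{S}y\rangle_{\mathbb{A}}$ is exactly equivalent to the paper's block-wise verification of $\mathbb{A}\mathbb{S}=\mathbb{T}^{*}\mathbb{A}$), and then pin down $\mathbb{S}$ as the reduced solution by checking $\mathcal{R}(\mathbb{S})\subseteq\overline{\mathcal{R}(\mathbb{A})}=\overline{\mathcal{R}(A)}\oplus\cdots\oplus\overline{\mathcal{R}(A)}$. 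The one genuine difference is how membership $\mathbb{T}\in\mathcal{B}_{\mathbb{A}}(\mathbb{H})$ is obtained: the paper proves it separately via the Douglas criterion \eqref{badeh}, producing constants $\mu_{ij}$ with $\|AT_{ij}x\|\leq\mu_{ij}\|Ax\|$ and assembling a norm estimate $\|\mathbb{A}\mathbb{T}x\|\leq\lambda\|\mathbb{A}x\|$ for the block matrix, whereas you observe that exhibiting a bounded $\mathbb{A}$-adjoint already yields membership by definition, making that estimate redundant. Your route is the more economical one; the paper's buys, as a by-product, an explicit bound $\lambda=d\max_{k,j}\mu_{kj}$ that quantifies the Douglas condition. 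Your alternative route via $\mathbb{A}^{\dagger}=\mathrm{diag}(A^{\dagger},\dots,A^{\dagger})$ and $\mathbb{T}^{\sharp_{\mathbb{A}}}=\mathbb{A}^{\dagger}\mathbb{T}^{*}\mathbb{A}$ is also sound, though if $\mathcal{R}(A)$ is not closed you should say a word about $A^{\dagger}$ being densely defined and unbounded before verifying the Moore--Penrose identities block-wise; the range-inclusion argument avoids this entirely and is preferable.
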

\begin{proof}
By taking into account \eqref{badeh}, we need to show that there exists $\lambda > 0$ such that
$$\|\mathbb{A}\mathbb{T}x\|\leq \lambda \|\mathbb{A}x\|,$$
for all $x=(x_1,\cdots,x_d)\in \mathbb{H}$ or equivalently
$$\sum_{k=1}^d\left\|\sum_{j=1}^dAT_{kj}x_j\right\|^2\leq \lambda^2 \sum_{k=1}^d\left\|Ax_k\right\|^2.$$
Let $x=(x_1,\cdots,x_d)\in \mathbb{H}$. Since $T_{ij}\in \mathcal{B}_{A}(\mathcal{H})$ for all $i,j$, then by  \eqref{badeh} there exists $\mu_{ij} > 0$ such that
$$\|AT_{ij}x\|\leq \mu_{ij} \left\|Ax\right\|,$$
for all $x\in \mathcal{H}$ and $i,j\in\{1,\cdots, d\}$. So, we get
\begin{align*}
\|\mathbb{A}\mathbb{T}x\|^2
&= \sum_{k=1}^d\left\|\sum_{j=1}^dAT_{kj}x_j\right\|^2\\
&\leq\sum_{k=1}^d\left(\sum_{j=1}^d\|AT_{kj}x_j\|\right)^2\\
&\leq\sum_{k=1}^d\left(\sum_{j=1}^d\mu_{kj} \left\|Ax_j\right\|\right)^2\\
&\leq d\,(\max_{k,j}\{\mu^2_{kj}\})\left(\sum_{j=1}^d \left\|Ax_j\right\|\right)^2\\
&\leq d^2\,(\max_{k,j}\{\mu^2_{kj}\})\sum_{j=1}^d \left\|Ax_j\right\|^2=\lambda^2\|\mathbb{A}x\|^2,
\end{align*}
where $\lambda = {d}\,(\,\max_{k,j}\{\mu_{kj}\})$. Hence, $\mathbb{T}\in\mathcal{B}_{\mathbb{A}}(\mathbb{H})$. In order to prove \eqref{sharpmatrix}, let
$$\mathbb{S} =\begin{pmatrix}
T_{11}^{\sharp_A} &T_{21}^{\sharp_A}  &\cdots&T_{d1}^{\sharp_A} \\
 T_{12}^{\sharp_A} &   T_{22}^{\sharp_A} &\cdots& T_{d2}^{\sharp_A}\\
\vdots & \vdots & \vdots & \vdots\\
T_{1d}^{\sharp_A}  &   T_{2d}^{\sharp_A} &\cdots& T_{dd}^{\sharp_A}\\
\end{pmatrix}.$$
By using the fact that $T_{ij}\in \mathcal{B}_A(\mathcal{H})$ for all $i,j\in\{1,\cdots,d\}$, one obtains
\begin{align*}
\mathbb{A}\mathbb{S}
& =\begin{pmatrix}
AT_{11}^{\sharp_A} &AT_{21}^{\sharp_A}  &\cdots&AT_{d1}^{\sharp_A} \\
 AT_{12}^{\sharp_A} &   AT_{22}^{\sharp_A} &\cdots& AT_{d2}^{\sharp_A}\\
\vdots & \vdots & \vdots & \vdots\\
AT_{1d}^{\sharp_A}  &  A T_{2d}^{\sharp_A} &\cdots& AT_{dd}^{\sharp_A}\\
\end{pmatrix}\\
& =\begin{pmatrix}
T_{11}^{*}A &T_{21}^{*}A  &\cdots&T_{d1}^{*}A \\
T_{12}^{*}A &  T_{22}^{*}A &\cdots& T_{d2}^{*}A\\
\vdots & \vdots & \vdots & \vdots\\
T_{1d}^{*} A &  T_{2d}^{*}A &\cdots& T_{dd}^{*}A\\
\end{pmatrix}\\
& =\begin{pmatrix}
T_{11} & T_{12} &\cdots& T_{1d}\\
T_{21} & T_{22} &\cdots& T_{2d}\\
\vdots & \vdots & \vdots & \vdots\\
T_{d1} & T_{d2} &\cdots& T_{dd}\\
\end{pmatrix}^{*}\mathbb{A}.
\end{align*}
Finally, in order to get \eqref{sharpmatrix}, we shall need to show that $\mathcal{R}(\mathbb{S})\subseteq \overline{\mathcal{R}(\mathbb{A})}$.

Let $x=(x_1,x_2,\cdots,x_d)\in \mathbb{H}$ be arbitrary. Then
$$\mathbb{S}x=\begin{pmatrix}
\sum^d_{j=1}T^{\sharp_A}_{j1}x_j\\
\sum^d_{j=1}T^{\sharp_A}_{j2}x_j\\
\vdots\\
\sum^d_{j=1}T^{\sharp_A}_{jd}x_j\\
 \end{pmatrix}.$$
Since  $T^{\sharp_A}_{jk}x_j\in \overline{\mathcal{R}(A)},$ so $\sum^d_{j=1}T^{\sharp_A}_{jk}x_j\in \overline{\mathcal{R}(A)}$ for each $k=1,2,\ldots,d$. This implies that $\mathbb{S}x\in \overline{\mathcal{R}(A)} \oplus \overline{\mathcal{R}(A)} \ldots \oplus \overline{\mathcal{R}(A)}= \overline{\mathcal{R}(\mathbb{A})}.$ This completes the proof of the theorem.
\end{proof}

\begin{lemma}\label{tilde2020}
 Let $\mathbb{T}= (T_{ij})_{d \times d}$ be such that $T_{ij}\in \mathcal{B}_{A^{1/2}}(\mathcal{H})$ for all $i,j$. Then, $\mathbb{T}\in\mathcal{B}_{\mathbb{A}^{1/2}}(\mathbb{H})$. Moreover, we have
$$\widetilde{\mathbb{T}}=(\widetilde{T_{ij}})_{d \times d}.$$
\end{lemma}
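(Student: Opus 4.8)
The plan is to verify the two assertions separately, mirroring the strategy already used for Lemma \ref{lemmajdid01} and Lemma \ref{l1}. For the first assertion, that $\mathbb{T}\in\mathcal{B}_{\mathbb{A}^{1/2}}(\mathbb{H})$, I would invoke the characterization \eqref{abbbbbbbb}: it suffices to produce $\lambda>0$ with $\|\mathbb{T}x\|_{\mathbb{A}}\leq\lambda\|x\|_{\mathbb{A}}$ for every $x=(x_1,\dots,x_d)\in\mathbb{H}$. Since each $T_{ij}\in\mathcal{B}_{A^{1/2}}(\mathcal{H})$, \eqref{abbbbbbbb} yields constants $\lambda_{ij}>0$ with $\|T_{ij}z\|_A\leq\lambda_{ij}\|z\|_A$ for all $z\in\mathcal{H}$. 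Writing $\|\mathbb{T}x\|_{\mathbb{A}}^2=\sum_{k=1}^d\big\|\sum_{j=1}^d T_{kj}x_j\big\|_A^2$ and applying the triangle inequality followed by the elementary bound $(\sum_{j=1}^d a_j)^2\leq d\sum_{j=1}^d a_j^2$, exactly along the chain of inequalities in the proof of Lemma \ref{lemmajdid01} but with the $A$-seminorm $\|\cdot\|_A$ in place of $\|A\cdot\|$, gives $\|\mathbb{T}x\|_{\mathbb{A}}^2\leq d^2(\max_{k,j}\lambda_{kj}^2)\|x\|_{\mathbb{A}}^2$. This establishes $\mathbb{A}^{1/2}$-boundedness with $\lambda=d\max_{k,j}\lambda_{kj}$.

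For the identity $\widetilde{\mathbb{T}}=(\widetilde{T_{ij}})_{d\times d}$ the first step is to identify the ambient Hilbert space. Since $\mathbb{A}=\mathrm{diag}(A,\dots,A)$, we have $\mathbb{A}^{1/2}=\mathrm{diag}(A^{1/2},\dots,A^{1/2})$, $P_{\mathbb{A}}=\mathrm{diag}(P_A,\dots,P_A)$, and $\mathcal{R}(\mathbb{A}^{1/2})=\oplus_{i=1}^d\mathcal{R}(A^{1/2})$ as sets. Applying \eqref{usefuleq001} componentwise, for $x,y\in\mathbb{H}$ one has $(\mathbb{A}x,\mathbb{A}y)=\langle x,y\rangle_{\mathbb{A}}=\sum_{k=1}^d\langle x_k\mid y_k\rangle_A=\sum_{k=1}^d(Ax_k,Ay_k)$, so the natural map $\mathbb{A}x\mapsto(Ax_1,\dots,Ax_d)$ is isometric on the dense subspace $\mathcal{R}(\mathbb{A})$ and hence extends to an isometric isomorphism $\mathbf{R}(\mathbb{A}^{1/2})\cong\oplus_{i=1}^d\mathbf{R}(A^{1/2})$. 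Under this identification the block matrix $(\widetilde{T_{ij}})_{d\times d}$ is a well-defined bounded operator on $\mathbf{R}(\mathbb{A}^{1/2})$, since each $\widetilde{T_{ij}}\in\mathcal{B}(\mathbf{R}(A^{1/2}))$ and the matrix is finite.

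The last step is the intertwining computation. By Proposition \ref{prop_arias} applied to the pair $(\mathbb{H},\mathbb{A})$, the operator $\widetilde{\mathbb{T}}$ is the unique element of $\mathcal{B}(\mathbf{R}(\mathbb{A}^{1/2}))$ satisfying $Z_{\mathbb{A}}\mathbb{T}=\widetilde{\mathbb{T}}Z_{\mathbb{A}}$, where $Z_{\mathbb{A}}x=\mathbb{A}x$. I would compute, for $x=(x_1,\dots,x_d)\in\mathbb{H}$, that the $k$-th component of $Z_{\mathbb{A}}(\mathbb{T}x)=\mathbb{A}\mathbb{T}x$ equals $\sum_{j=1}^d AT_{kj}x_j=\sum_{j=1}^d Z_A T_{kj}x_j$, and then use the single-operator relation $Z_A T_{kj}=\widetilde{T_{kj}}Z_A$ from Proposition \ref{prop_arias} to rewrite this as $\sum_{j=1}^d\widetilde{T_{kj}}(Ax_j)$. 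This is precisely the $k$-th component of $(\widetilde{T_{ij}})_{d\times d}$ acting on $Z_{\mathbb{A}}x=(Ax_1,\dots,Ax_d)$. Hence $Z_{\mathbb{A}}\mathbb{T}=(\widetilde{T_{ij}})Z_{\mathbb{A}}$, and the uniqueness clause of Proposition \ref{prop_arias} forces $\widetilde{\mathbb{T}}=(\widetilde{T_{ij}})_{d\times d}$. I expect the main obstacle to be the careful justification of the isometric identification $\mathbf{R}(\mathbb{A}^{1/2})\cong\oplus_{i=1}^d\mathbf{R}(A^{1/2})$, in particular checking that the de Branges--Rovnyak inner product on $\mathbf{R}(\mathbb{A}^{1/2})$ decomposes as the direct sum of the component inner products; everything else reduces to componentwise bookkeeping together with the uniqueness in Proposition \ref{prop_arias}.
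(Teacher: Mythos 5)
Your proposal is correct and follows essentially the same route as the paper: the $\mathbb{A}^{1/2}$-boundedness is obtained by repeating the inequality chain of Lemma \ref{lemmajdid01} with $\|\cdot\|_A$ in place of $\|A\cdot\|$, and the identity $\widetilde{\mathbb{T}}=(\widetilde{T_{ij}})_{d\times d}$ follows from the componentwise intertwining $Z_AT_{kj}=\widetilde{T_{kj}}Z_A$ together with the uniqueness clause of Proposition \ref{prop_arias}. Your explicit verification that the de Branges--Rovnyak inner product on $\mathbf{R}(\mathbb{A}^{1/2})$ decomposes as $\oplus_{i=1}^d\mathbf{R}(A^{1/2})$ is a point the paper leaves implicit, and including it is a welcome (if minor) addition.
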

\begin{proof}
By proceeding as in the proof of Lemma \ref{lemmajdid01}, we can show that there exists $\lambda > 0$ such that
$$\|\mathbb{T}x\|_\mathbb{A} \leq \lambda \|x\|_\mathbb{A},$$
for all $x=(x_1,\cdots,x_d)\in \mathbb{H}$. Hence, by \eqref{abbbbbbbb} we deduce that $\mathbb{T}\in\mathcal{B}_{\mathbb{A}^{1/2}}(\mathbb{H})$. So, by Proposition \ref{prop_arias} there exists a unique $\widetilde{\mathbb{T}}\in \mathcal{B}(\mathbf{R}(\mathbb{A}^{1/2}))$ such that $Z_{\mathbb{A}}\mathbb{T} =\widetilde{\mathbb{T}}Z_{\mathbb{A}}$. On the other hand, for all $x=(x_1,\cdots,x_d)\in \mathbb{H}$ we have
\begin{align*}
Z_{\mathbb{A}}\mathbb{T}x
&=\begin{pmatrix}A & 0 &\cdots& 0\\
0& A &\cdots& 0\\
\vdots & \vdots & \vdots & \vdots\\
0 & 0 &\cdots& A
\end{pmatrix}
\begin{pmatrix}
\sum_{j=1}^dT_{1j}x_j \\
\sum_{j=1}^dT_{2j}x_j \\
\vdots \\
\sum_{j=1}^dT_{dj}x_j
\end{pmatrix}  =\begin{pmatrix}
\sum_{j=1}^dAT_{1j}x_j \\
\sum_{j=1}^dAT_{2j}x_j \\
\vdots \\
\sum_{j=1}^dAT_{dj}x_j
\end{pmatrix}.\\
\end{align*}
Since, $T_{ij}\in \mathcal{B}_{A^{1/2}}(\mathcal{H})$ for all $i\in \{1,\cdots,d\}$. Then, by Proposition \ref{prop_arias} there exists $\widetilde{T_{ij}}\in \mathcal{B}(\mathbf{R}(A^{1/2}))$ such that $AT_{ij}=\widetilde{T_{ij}}A$ for all $i\in \{1,\cdots,d\}$. So, we get
\begin{align*}
Z_{\mathbb{A}}\mathbb{T}x
&=\begin{pmatrix}
\sum_{j=1}^d\widetilde{T_{1j}}Ax_j \\
\sum_{j=1}^d\widetilde{T_{2j}}Ax_j \\
\vdots \\
\sum_{j=1}^d\widetilde{T_{dj}}Ax_j
\end{pmatrix}\\
&=\begin{pmatrix}
\widetilde{T_{11}} & \widetilde{T_{12}} &\cdots& \widetilde{T_{1d}}\\
\widetilde{T_{21}} & \widetilde{T_{22}} &\cdots& \widetilde{T_{2d}}\\
\vdots & \vdots & \vdots & \vdots\\
\widetilde{T_{d1}} & \widetilde{T_{d2}} &\cdots& \widetilde{T_{dd}}
\end{pmatrix}
\begin{pmatrix}A & 0 &\cdots& 0\\
0& A &\cdots& 0\\
\vdots & \vdots & \vdots & \vdots\\
0 & 0 &\cdots& A
\end{pmatrix}
\begin{pmatrix}
x_1 \\
x_2 \\
\vdots \\
x_d
\end{pmatrix}\\
&=\begin{pmatrix}
\widetilde{T_{11}} & \widetilde{T_{12}} &\cdots& \widetilde{T_{1d}}\\
\widetilde{T_{21}} & \widetilde{T_{22}} &\cdots& \widetilde{T_{2d}}\\
\vdots & \vdots & \vdots & \vdots\\
\widetilde{T_{d1}} & \widetilde{T_{d2}} &\cdots& \widetilde{T_{dd}}
\end{pmatrix}
Z_{\mathbb{A}}x.
\end{align*}
Hence, we infer that $\widetilde{\mathbb{T}}=(\widetilde{T_{ij}})_{d \times d}.$
\end{proof}

The following refinement of the triangle inequality has been proved by F. Kittaneh et al. in \cite{OK2}.

%============================================================================================================

\begin{theorem}
\label{triangle inequality} Let $T,S\in \mathcal{B}(\mathcal{H})$. Then,
\begin{equation*}
\Vert T+S\Vert \leq 2\omega\left(\begin{array}{cc}
0 & T\\
S^{\ast } & 0
\end{array}\right) \leq \Vert T\Vert +\Vert S\Vert .
\end{equation*}
\end{theorem}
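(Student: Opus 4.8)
The plan is to compute the numerical radius of the off-diagonal block operator
$$
M=\begin{pmatrix} 0 & T\\ S^{*} & 0\end{pmatrix}\in\mathcal{B}(\mathcal{H}\oplus\mathcal{H})
$$
exactly, and then read off both inequalities from a single closed formula. The key tool is the classical identity $\omega(X)=\sup_{\theta\in\mathbb{R}}\|\mathrm{Re}(e^{i\theta}X)\|$, valid for every bounded operator $X$, where $\mathrm{Re}(Y)=\tfrac12(Y+Y^{*})$. First I would record why this identity holds: for a unit vector $\xi$ one has $|\langle X\xi,\xi\rangle|=\sup_{\theta}\mathrm{Re}\!\left(e^{i\theta}\langle X\xi,\xi\rangle\right)=\sup_{\theta}\langle \mathrm{Re}(e^{i\theta}X)\xi,\xi\rangle$, because $\overline{\langle X\xi,\xi\rangle}=\langle X^{*}\xi,\xi\rangle$. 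Taking the supremum over $\xi$ and interchanging the two suprema, and using that each $\mathrm{Re}(e^{i\theta}X)$ is self-adjoint together with $-\mathrm{Re}(e^{i\theta}X)=\mathrm{Re}(e^{i(\theta+\pi)}X)$, one gets exactly $\omega(X)=\sup_{\theta}\|\mathrm{Re}(e^{i\theta}X)\|$.

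Next I would apply this to $M$. A direct computation of $e^{i\theta}M+(e^{i\theta}M)^{*}$ gives
$$
\mathrm{Re}(e^{i\theta}M)=\begin{pmatrix} 0 & K_{\theta}\\ K_{\theta}^{*} & 0\end{pmatrix},\qquad K_{\theta}=\tfrac12\left(e^{i\theta}T+e^{-i\theta}S\right).
$$
The crucial auxiliary fact is that a self-adjoint off-diagonal block $N=\begin{pmatrix} 0 & K\\ K^{*} & 0\end{pmatrix}$ satisfies $N^{2}=\mathrm{diag}(KK^{*},K^{*}K)$, so that $\|N\|=\|N^{2}\|^{1/2}=\max\{\|KK^{*}\|,\|K^{*}K\|\}^{1/2}=\|K\|$. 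Consequently $\|\mathrm{Re}(e^{i\theta}M)\|=\tfrac12\|e^{i\theta}T+e^{-i\theta}S\|$, and combining with the identity of the first paragraph I obtain the exact formula $2\omega(M)=\sup_{\theta\in\mathbb{R}}\|e^{i\theta}T+e^{-i\theta}S\|$.

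From this formula both inequalities are immediate. The lower bound follows by evaluating the supremum at $\theta=0$, which yields $2\omega(M)\geq\|T+S\|$. The upper bound follows from the ordinary triangle inequality $\|e^{i\theta}T+e^{-i\theta}S\|\leq\|T\|+\|S\|$ for every $\theta$, whence $2\omega(M)\leq\|T\|+\|S\|$. As a variant, the upper bound also admits a fully elementary one-line proof: for a unit vector $(x,y)\in\mathcal{H}\oplus\mathcal{H}$ one has $|\langle M(x,y),(x,y)\rangle|=|\langle Ty,x\rangle+\langle S^{*}x,y\rangle|\leq(\|T\|+\|S\|)\|x\|\,\|y\|\leq\tfrac12(\|T\|+\|S\|)$.

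The step I expect to be the main obstacle is not any single computation but the careful justification of the two auxiliary facts (the $\sup_{\theta}$–representation of $\omega$ and the block-norm equality), since everything downstream is a one-line consequence. I would deliberately avoid trying to prove the lower bound by an elementary choice of test vectors: the natural candidate $\xi=\tfrac{1}{\sqrt2}(v,u)$ produces $\tfrac12\big(\langle Tu,v\rangle+\overline{\langle Su,v\rangle}\big)$, and the stray complex conjugate on the $S$-term prevents one from recovering $\langle(T+S)u,v\rangle$ directly. This mismatch is precisely what the $\mathrm{Re}(e^{i\theta}\cdot)$ formula resolves uniformly, which is why I favor the block-computation route over a direct vector argument.
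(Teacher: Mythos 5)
Your proof is correct: the identity $\omega(X)=\sup_{\theta}\|\tfrac12(e^{i\theta}X+e^{-i\theta}X^{*})\|$, the computation of $\mathrm{Re}(e^{i\theta}M)$ as an off-diagonal self-adjoint block with corner $K_{\theta}=\tfrac12(e^{i\theta}T+e^{-i\theta}S)$, and the norm identity $\bigl\|\begin{smallmatrix}0&K\\K^{*}&0\end{smallmatrix}\bigr\|=\|K\|$ all check out, and the two inequalities do follow from the resulting formula $2\omega(M)=\sup_{\theta}\|e^{i\theta}T+e^{-i\theta}S\|$ by taking $\theta=0$ and by the triangle inequality, respectively. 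Note that the paper itself supplies no proof of this statement --- it is quoted verbatim from Abu-Omar and Kittaneh \cite{OK2} --- and your argument is essentially the standard one given in that reference, so there is nothing to reconcile with the paper beyond the citation.
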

Now, we extend the above theorem as follows.
\begin{theorem}\label{th-p1}
Let $T_{12},T_{21}\in \mathcal{B}_{A}(\mathcal{H})$  and $\mathbb{A}=\left(\begin{array}{cc}
A&0 \\
0&A
\end{array}\right)\in \mathcal{B}(\mathcal{H}\oplus \mathcal{H})^+$. Then
\begin{equation}\label{pintu01}
\frac{1}{2}\|T_{12}+T_{21}\|_A\leq \omega_{\mathbb{A}}\left(\begin{array}{cc}
O&T_{12} \\
T^{\sharp_A}_{21}&O
\end{array}\right) \leq  \frac{1}{2}(\|T_{12}\|_A+\|T_{21}\|_A).
\end{equation}
\end{theorem}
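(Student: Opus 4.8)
The plan is to transfer the entire inequality to the Hilbert space $\mathbf{R}(\mathbb{A}^{1/2})$ via the $\widetilde{\,\cdot\,}$ construction, and then invoke the classical refinement of the triangle inequality in Theorem \ref{triangle inequality}. First I would note that since $T_{12},T_{21}\in\mathcal{B}_A(\mathcal{H})$, we also have $T_{21}^{\sharp_A}\in\mathcal{B}_A(\mathcal{H})\subseteq\mathcal{B}_{A^{1/2}}(\mathcal{H})$, so every entry of the $2\times 2$ operator matrix
$$\mathbb{B}=\begin{pmatrix} O & T_{12}\\ T_{21}^{\sharp_A} & O\end{pmatrix}$$
is $A$-bounded. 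Hence Lemma \ref{tilde2020} applies and gives $\mathbb{B}\in\mathcal{B}_{\mathbb{A}^{1/2}}(\mathbb{H})$ together with
$$\widetilde{\mathbb{B}}=\begin{pmatrix} O & \widetilde{T_{12}}\\ \widetilde{T_{21}^{\sharp_A}} & O\end{pmatrix}.$$

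Next I would rewrite the bottom-left entry by means of Lemma \ref{lau}, namely $\widetilde{T_{21}^{\sharp_A}}=(\widetilde{T_{21}})^*$, so that
$$\widetilde{\mathbb{B}}=\begin{pmatrix} O & \widetilde{T_{12}}\\ (\widetilde{T_{21}})^* & O\end{pmatrix}.$$
This is precisely the form of the matrix appearing in Theorem \ref{triangle inequality}. Applying that theorem with $T=\widetilde{T_{12}}$ and $S=\widetilde{T_{21}}$ yields
$$\big\|\widetilde{T_{12}}+\widetilde{T_{21}}\big\| \leq 2\,\omega(\widetilde{\mathbb{B}}) \leq \big\|\widetilde{T_{12}}\big\|+\big\|\widetilde{T_{21}}\big\|.$$

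Finally I would translate each term back to the semi-Hilbertian setting using Lemma \ref{preprintkais}. By part (2) of that lemma, $\omega(\widetilde{\mathbb{B}})=\omega_{\mathbb{A}}(\mathbb{B})$. By part (1), together with the linearity of the map $T\mapsto\widetilde{T}$ (which follows from the uniqueness statement in Proposition \ref{prop_arias}, giving $\widetilde{T_{12}}+\widetilde{T_{21}}=\widetilde{T_{12}+T_{21}}$), we have $\|\widetilde{T_{12}}+\widetilde{T_{21}}\|=\|T_{12}+T_{21}\|_A$, while $\|\widetilde{T_{12}}\|=\|T_{12}\|_A$ and $\|\widetilde{T_{21}}\|=\|T_{21}\|_A$. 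Substituting these identities into the displayed chain and dividing by $2$ produces exactly \eqref{pintu01}.

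The computation is essentially bookkeeping; the one point that requires care, and the step I would single out as the crux, is the identification $\widetilde{T_{21}^{\sharp_A}}=(\widetilde{T_{21}})^*$ via Lemma \ref{lau}. It is this adjoint relation that turns $\widetilde{\mathbb{B}}$ into exactly the off-diagonal matrix $\begin{pmatrix} 0 & \widetilde{T_{12}}\\ (\widetilde{T_{21}})^* & 0\end{pmatrix}$ demanded by Theorem \ref{triangle inequality}; without the $\sharp_A$ in the $(2,1)$-entry of the statement, the transfer would not produce a genuine Hilbert-space adjoint and the classical inequality could not be applied directly. The hypothesis $T_{12},T_{21}\in\mathcal{B}_A(\mathcal{H})$ (rather than merely $\mathcal{B}_{A^{1/2}}(\mathcal{H})$) is precisely what guarantees the existence of $T_{21}^{\sharp_A}$ and the validity of Lemma \ref{lau}.
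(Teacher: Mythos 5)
Your proposal is correct and follows essentially the same route as the paper: form $\widetilde{\mathbb{T}}$ via Lemma \ref{tilde2020}, identify the $(2,1)$-entry as $(\widetilde{T_{21}})^*$ using Lemma \ref{lau}, apply Theorem \ref{triangle inequality} in $\mathbf{R}(\mathbb{A}^{1/2})$, and translate back with Lemma \ref{preprintkais} together with $\widetilde{T_{12}}+\widetilde{T_{21}}=\widetilde{T_{12}+T_{21}}$. The point you single out as the crux (the adjoint identification supplied by Lemma \ref{lau}) is exactly the step the paper relies on as well.
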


\begin{proof}
Let $\mathbb{T}=\left(\begin{array}{cc}
O&T_{12} \\
T^{\sharp_A}_{21}&O
\end{array}\right)$. In view of Lemma \ref{tilde2020}, we have $\mathbb{T}\in \mathcal{B}_{\mathbb{A}^{1/2}}(\mathcal{H}\oplus \mathcal{H})$ as $\mathcal{B}_{A}(\mathcal{H}) \subseteq \mathcal{B}_{A^{1/2}}(\mathcal{H})$. So, by Proposition \ref{prop_arias} there exists $\widetilde{\mathbb{T}}\in \mathcal{B}(\mathbf{R}(\mathbb{A}^{1/2}))$ such that $Z_{\mathbb{A}}\mathbb{T}=\widetilde{\mathbb{T}}Z_{\mathbb{A}}$. Moreover, by applying Lemma \ref{tilde2020} together with Lemma \ref{lau}, we get
$$\widetilde{\mathbb{T}}=\left(\begin{array}{cc}
O&\widetilde{T_{12}} \\
\widetilde{T^{\sharp_A}_{21}}&O
\end{array}\right)=\left(\begin{array}{cc}
O&\widetilde{T_{12}} \\
\widetilde{T_{21}}^{*}&O
\end{array}\right)$$
So, by applying Lemma \ref{preprintkais} together with Theorem \ref{triangle inequality} we infer that
\[\frac{1}{2}\|\widetilde{T_{12}}+\widetilde{T_{21}}\|_{\mathcal{B}(\mathbf{R}(A^{1/2}))}\leq \omega(\widetilde{\mathbb{T}}) \leq  \frac{1}{2}(\|\widetilde{T_{12}}\|_{\mathcal{B}(\mathbf{R}(A^{1/2}))}+\|\widetilde{T_{21}}\|_{\mathcal{B}(\mathbf{R}(A^{1/2}))}).\]
On the other hand, it is not difficult to see that $\widetilde{T_{12}}+\widetilde{T_{21}}=\widetilde{T_{12}+T_{21}}$. Therefore, \eqref{pintu01} is proved by using Lemma \ref{preprintkais}.
\end{proof}

Using  the inequality \eqref{pintu01} we prove the following theorem.

\begin{theorem}
Let $T_{12},T_{21}\in \mathcal{B}_A(\mathcal{H})$ be such that $T_{12} \parallel_A T_{21}$. Let also $\mathbb{A}=\left(\begin{array}{cc}
A&0 \\
0&A
\end{array}\right)$. Then
\[ \omega_{\mathbb{A}}\left(\begin{array}{cc}
O&T_{12} \\
e^{-2i\beta}T^{\sharp}_{21}&O
\end{array}\right)= \tfrac{1}{2}(\|T_{12}\|_A+\|T_{21}\|_A),\]
for some real $\beta$.
\end{theorem}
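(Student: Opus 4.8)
The plan is to deduce the stated equality from the two-sided estimate \eqref{pintu01} of Theorem \ref{th-p1} by feeding in the correct unimodular scalar and then squeezing. The only structural feature to exploit beforehand is the conjugate-linearity of the map $T\mapsto T^{\sharp_A}$: since $T^{\sharp_A}=A^\dagger T^*A$, one has $(\alpha T)^{\sharp_A}=\overline{\alpha}\,T^{\sharp_A}$ for every $\alpha\in\mathbb{C}$, and in particular $(e^{2i\beta}T_{21})^{\sharp_A}=e^{-2i\beta}T_{21}^{\sharp_A}$. This is precisely why the scalar appearing in the lower-left corner of the matrix in the statement is $e^{-2i\beta}$ rather than $e^{2i\beta}$, and it tells me which operator to substitute into Theorem \ref{th-p1}.

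Next I would invoke the hypothesis. Because $T_{12}\parallel_A T_{21}$, by definition there exists $\lambda\in\mathbb{T}$ with $\|T_{12}+\lambda T_{21}\|_A=\|T_{12}\|_A+\|T_{21}\|_A$. Writing $\lambda=e^{2i\beta}$ for a suitable real $\beta$ (every unimodular number admits such a representation), I would apply Theorem \ref{th-p1} with $T_{12}$ and $e^{2i\beta}T_{21}$ in place of $T_{12}$ and $T_{21}$; note $e^{2i\beta}T_{21}\in\mathcal{B}_A(\mathcal{H})$ since $\mathcal{B}_A(\mathcal{H})$ is a subalgebra. Using the identity $(e^{2i\beta}T_{21})^{\sharp_A}=e^{-2i\beta}T_{21}^{\sharp_A}$ together with $\|e^{2i\beta}T_{21}\|_A=\|T_{21}\|_A$, the inequality \eqref{pintu01} reads
$$\frac{1}{2}\|T_{12}+e^{2i\beta}T_{21}\|_A\leq \omega_{\mathbb{A}}\begin{pmatrix} O&T_{12}\\ e^{-2i\beta}T_{21}^{\sharp_A}&O\end{pmatrix}\leq \frac{1}{2}\big(\|T_{12}\|_A+\|T_{21}\|_A\big).$$

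Finally, a squeeze argument closes everything: the parallelism identity says the leftmost quantity equals $\tfrac{1}{2}(\|T_{12}\|_A+\|T_{21}\|_A)$, which is exactly the rightmost bound, so the middle term is forced to equal this common value, giving the claim. I do not anticipate a real obstacle here, since the heavy lifting already resides in Theorem \ref{th-p1} (itself obtained by transporting the refined triangle inequality of Theorem \ref{triangle inequality} to $\mathbf{R}(A^{1/2})$ via the $\sim$-correspondence). The single point demanding care is the phase bookkeeping: matching the unimodular $\lambda$ coming from $A$-norm parallelism with the exponent $e^{2i\beta}$, and correctly tracking the conjugation produced by $\sharp_A$ so that the corner entry is $e^{-2i\beta}T_{21}^{\sharp_A}$ and not $e^{2i\beta}T_{21}^{\sharp_A}$.
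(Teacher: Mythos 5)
Your proposal is correct and follows essentially the same route as the paper: apply Theorem \ref{th-p1} to the pair $T_{12}$ and $e^{2i\beta}T_{21}$ (where $e^{2i\beta}$ is the unimodular scalar furnished by $T_{12}\parallel_A T_{21}$) and squeeze the resulting two-sided bound. The only difference is that you make explicit the conjugate-linearity $(e^{2i\beta}T_{21})^{\sharp_A}=e^{-2i\beta}T_{21}^{\sharp_A}$ justifying the corner entry, a detail the paper leaves implicit.
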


\begin{proof}
We have $\|T_{12}+ e^{2i\beta}T_{21}\|_A=\|T_{12}\|_A+\|T_{21}\|_A$ for some  real $\beta$, as  $T_{12} \|_A T_{21}$. Therefore, using Theorem \ref{th-p1} we have
\begin{align*}
\|T_{12}\|_A+\|T_{21}\|_A
&= \|T_{12}+ e^{2i\beta}T_{21}\|_A\\
&\leq  2 \omega_{\mathbb{A}}\left(\begin{array}{cc}
O&T_{12} \\
e^{-2i\beta}T^{\sharp}_{21}&O
\end{array}\right)\\
&\leq \|T_{12}\|_A+\|T_{21}\|_A.
\end{align*}
It follows that
\[ \omega_{\mathbb{A}}\left(\begin{array}{cc}
O&T_{12} \\
e^{-2i\beta}T^{\sharp}_{21}&O
\end{array}\right) = \tfrac{1}{2}(\|T_{12}\|_A+\|T_{21}\|_A),\]
for some real $\beta$.
\end{proof}

Using the notion of $\mathbb{A}$-numerical radius orthogonality we obtain $\mathbb{A}$-numerical radius inequality for $d \times d$ operator matrices in the following theorem.

\begin{theorem}\label{th-1}
Let $\mathbb{T}=(T_{ij})$ be a ${d \times d}$ operator matrix where $T_{ij}\in \mathcal{B}_{A^{1/2}}(\mathcal{H})$. Then,
\[\omega_\mathbb{A}(\mathbb{T})\geq \max \left \{ \omega_A(T_{ii}), \omega_{\mathbb{A}}(\mathbb{S}_{i}): 1\leq i\leq d\right \}\]
where for each $i\in\{1,\cdots,d\}$, the operator matrix $\mathbb{S}_i=(s^{i}_{jk})_{d \times d}$ is defined as $s^{i}_{jk}=O$ if $j=i$ or $k=i$ and $s^{i}_{jk}=T_{jk}$ otherwise, i.e.
 	\[
 	\mathbb{S}_i=\begin{pmatrix}
 	T_{11} &\ldots&T_{1(i-1)}&O  &T_{1(i+1)}&\ldots&T_{1d}\\
 	\vdots &\vdots &\ldots&\vdots &\vdots&  \vdots   & \ldots \\
 	T_{(i-1)1} &\ldots&T_{(i-1)(i-1)}&O  &T_{(i-1)(i+1)}&\ldots&T_{(i-1)d}\\
 	O &\ldots&O&O&O&\ldots&O \\
 	T_{(i+1)1} &\ldots&T_{(i+1)(i-1)}&O  &T_{(i+1)(i+1)}&\ldots&T_{(i+1)d}\\
 	\vdots &\ldots&\vdots &\vdots&  \vdots   & \ldots &\vdots\\
 	T_{d1} &\ldots&T_{d(i-1)}&O  &T_{d(i+1)}&\ldots&T_{dd}	
 	\end{pmatrix}.
 	\]
\end{theorem}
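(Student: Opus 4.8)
The plan is to reduce the inequality to a pair of elementary observations about the $\mathbb{A}$-numerical radius, using the variational definition of $\omega_{\mathbb{A}}$ and cleverly chosen test vectors in $\mathbb{H}=\oplus_{i=1}^d\mathcal{H}$. Since the claim is that $\omega_{\mathbb{A}}(\mathbb{T})$ dominates both $\omega_A(T_{ii})$ and $\omega_{\mathbb{A}}(\mathbb{S}_i)$ for every $i$, it suffices to prove the two bounds separately and take the maximum.

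First I would establish $\omega_{\mathbb{A}}(\mathbb{T})\geq \omega_A(T_{ii})$. The key idea is to test $\mathbb{T}$ against vectors supported only in the $i$-th slot. Fix $i$ and let $x\in\mathcal{H}$ with $\|x\|_A=1$; consider the vector $\mathbf{x}=(0,\ldots,0,x,0,\ldots,0)\in\mathbb{H}$ with $x$ in position $i$. Then $\|\mathbf{x}\|_{\mathbb{A}}^2=\|x\|_A^2=1$, and a direct computation of $\langle \mathbb{T}\mathbf{x},\mathbf{x}\rangle_{\mathbb{A}}$ picks out precisely the $(i,i)$ entry, giving $\langle \mathbb{T}\mathbf{x},\mathbf{x}\rangle_{\mathbb{A}}=\langle T_{ii}x\mid x\rangle_A$. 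Taking the supremum over all such $A$-unit vectors $x$ and comparing with the full supremum defining $\omega_{\mathbb{A}}(\mathbb{T})$ yields $\omega_{\mathbb{A}}(\mathbb{T})\geq \omega_A(T_{ii})$.

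Next I would prove $\omega_{\mathbb{A}}(\mathbb{T})\geq \omega_{\mathbb{A}}(\mathbb{S}_i)$. Here the natural approach is to recognize that $\mathbb{S}_i$ is obtained from $\mathbb{T}$ by zeroing out the $i$-th row and the $i$-th column. I would exploit the fact that for any $\mathbf{x}=(x_1,\ldots,x_d)\in\mathbb{H}$ whose $i$-th coordinate vanishes, the quantity $\langle \mathbb{T}\mathbf{x},\mathbf{x}\rangle_{\mathbb{A}}$ equals $\langle \mathbb{S}_i\mathbf{x},\mathbf{x}\rangle_{\mathbb{A}}$, since the only entries of $\mathbb{T}$ that differ from $\mathbb{S}_i$ lie in the $i$-th row or column, and these are multiplied either by the vanishing coordinate $x_i$ or produce output in a slot that is paired against $x_i=0$. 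Because $\mathbb{S}_i$ acts trivially on the $i$-th slot, its $\mathbb{A}$-numerical radius is attained (up to supremum) on vectors with vanishing $i$-th coordinate; restricting the supremum defining $\omega_{\mathbb{A}}(\mathbb{S}_i)$ to such $\mathbb{A}$-unit vectors and embedding them as test vectors for $\mathbb{T}$ then gives the desired inequality. Combining the two bounds over all $i$ completes the proof.

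The step I expect to require the most care is the second inequality: one must argue cleanly that the supremum defining $\omega_{\mathbb{A}}(\mathbb{S}_i)$ need only range over $\mathbb{A}$-unit vectors whose $i$-th coordinate is zero, and that on this restricted set the two sesquilinear forms genuinely coincide. The subtlety is that an arbitrary $\mathbb{A}$-unit vector may have a nonzero $i$-th component on which $\mathbb{S}_i$ does nothing; I would handle this by noting that deleting the $i$-th coordinate of any test vector can only decrease the $\mathbb{A}$-norm while leaving $\langle \mathbb{S}_i\mathbf{x},\mathbf{x}\rangle_{\mathbb{A}}$ unchanged, after which renormalization increases the value of the quotient, so the supremum is indeed achieved on the zero-$i$-th-coordinate subspace. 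Everything else is a routine unwinding of the definition of $\langle\cdot,\cdot\rangle_{\mathbb{A}}$ on $\mathbb{H}$.
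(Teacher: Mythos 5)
Your proposal is correct, and the first half ($\omega_{\mathbb{A}}(\mathbb{T})\geq\omega_A(T_{ii})$ via test vectors supported in the $i$-th slot) is exactly the paper's argument. For the second inequality you take a genuinely more direct route. The paper writes $\mathbb{T}=\mathbb{S}_i+\mathbb{M}_i$ with $\mathbb{M}_i=\mathbb{T}-\mathbb{S}_i$, proves the same restriction lemma you identify (the supremum defining $\omega_{\mathbb{A}}(\mathbb{S}_i)$ may be taken over $\mathbb{A}$-unit vectors with vanishing $i$-th coordinate, by deleting that coordinate and rescaling by $\alpha=\bigl(\sum_{j\neq i}\|x_{mj}\|_A^2\bigr)^{-1/2}>1$), observes that $\langle\mathbb{M}_iZ_m,Z_m\rangle_{\mathbb{A}}=0$ on such vectors, and then invokes the orthogonality characterization of Theorem \ref{mainortho} to conclude $\mathbb{S}_i\perp_{\omega_{\mathbb{A}}}\mathbb{M}_i$, whence $\omega_{\mathbb{A}}(\mathbb{S}_i+1\cdot\mathbb{M}_i)\geq\omega_{\mathbb{A}}(\mathbb{S}_i)$. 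You instead note that on the restricted set of test vectors the two sesquilinear forms $\langle\mathbb{T}\cdot,\cdot\rangle_{\mathbb{A}}$ and $\langle\mathbb{S}_i\cdot,\cdot\rangle_{\mathbb{A}}$ coincide outright, which yields $\omega_{\mathbb{A}}(\mathbb{T})\geq\omega_{\mathbb{A}}(\mathbb{S}_i)$ without any appeal to the orthogonality machinery; this is shorter and entirely elementary, whereas the paper's detour serves its expository aim of exhibiting the inequality as an application of $\mathbb{A}$-numerical radius orthogonality. One point to tidy in a final write-up (the paper glosses over it too): since $\|\cdot\|_{\mathbb{A}}$ is only a seminorm, the truncated vector may satisfy $\sum_{j\neq i}\|x_{mj}\|_A^2=0$, in which case rescaling is impossible; but then the Cauchy--Schwarz inequality for the semi-inner product forces $\langle\mathbb{S}_i\mathbf{x},\mathbf{x}\rangle_{\mathbb{A}}=0$, so such vectors do not affect the supremum.
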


\begin{proof}
First we prove that $\omega_\mathbb{A}(\mathbb{T})\geq \omega_A(T_{ii})$ for all $i\in\{1,\cdots,d\}$. By definition of the $A$-numerical radius of the operator $T_{11}$, there exists a sequence of $A$-unit vectors $\{x_n\}$ in $\mathcal{H}$ such that
\[\displaystyle\lim_{n\rightarrow +\infty}|\langle T_{11}x_n\mid x_n\rangle_A|=\omega_A(T_{11}).\]
Let $X_n=(x_n,0,\cdots,0)\in \mathbb{H}$. Then $\|X_n\|_\mathbb{A}=\|x_n\|_A=1.$ Therefore, from an easy calculation we have
\begin{align*}
|\langle \mathbb{T}X_n,X_n\rangle_\mathbb{A}| &= |\langle T_{11}x_n\mid x_n\rangle_A|\\
\Rightarrow \omega_\mathbb{A}(\mathbb{T}) &\geq |\langle T_{11}x_n\mid x_n\rangle_A|\\
\Rightarrow \omega_\mathbb{A}(\mathbb{T}) &\geq \displaystyle\lim_{n\rightarrow +\infty}|\langle T_{11}x_n\mid x_n\rangle_A|\\
\Rightarrow \omega_\mathbb{A}(\mathbb{T}) &\geq \omega_A(T_{11}).
\end{align*}
Similarly, we can show that $\omega_\mathbb{A}(\mathbb{T})\geq \omega_A(T_{ii})$ for all $i=2,3,\ldots,d$.

Next we will prove that $\omega_{\mathbb{A}}(\mathbb{T})\geq \omega_\mathbb{A}(\mathbb{S}_{i})$ for all $i\in\{1,\cdots,d\}$. Let us assume that $\mathbb{M}_i=\mathbb{T}-\mathbb{S}_i$. We first show that $\mathbb{S}_i \perp_{\omega_\mathbb{A}} \mathbb{M}_i$.
%{\bf Let $X=(x_1,\cdots,x_d)\in \mathbb{H}$ be such that $\|X\|_\mathbb{A}=1$, i.e., $\|x_1\|_A^2+\|x_2\|_A^2+\ldots+\|x_d\|_A^2=1.$  Then
%\[\left|\langle \mathbb{S}_{i}X,X\rangle_{\mathbb{A}}\right|=\left|\sum^d_{\substack{j=1,\\j\neq i}}\sum^d_{\substack{k=1,\\k\neq i}}\langle T_{jk}x_k\mid x_j\rangle_A \right |.\]
%I don't see the importance of this }
Let $\{X_m\}=\{(x_{m1},x_{m2},\cdots,x_{md})\}$ be a sequence of $\mathbb{A}$-unit vector in $\mathbb{H}$, i.e. $\|X_m\|_\mathbb{A}=1$, i.e. $\|x_{m1}\|_A^2+\|x_{m2}\|_A^2+\cdots+\|x_{md}\|_A^2=1$ such that
\[\displaystyle\lim_{m\rightarrow +\infty}|\langle \mathbb{S}_{i}X_m,X_m\rangle_{\mathbb{A}}|=\omega_{\mathbb{A}}(\mathbb{S}_{i}).\]
We claim that there exist sequence $\{Z_m\}$ in $\mathbb{H}$ of the form
$\{(z_{m1},z_{m2},\ldots,z_{m (i-1)},0,$ $z_{m (i+1)},\ldots, z_{md})\}$
such that $\|Z_m\|_\mathbb{A}=1$ and
\[\displaystyle\lim_{m\rightarrow +\infty}|\langle \mathbb{S}_{i}Z_m,Z_m\rangle_{\mathbb{A}}|=\omega_{\mathbb{A}}(\mathbb{S}_{i}).\]
Suppose that $\sum^d_{j=1,j\neq i}\|x_{mj}\|_A^2<1$. Let $\alpha=\frac{1}{\sqrt{\sum^d_{j=1,j\neq i}\|x_{mj}\|_A^2}}$ and
$$Z_m=\alpha(x_{m1},x_{m2},\ldots,x_{m (i-1)},0,x_{m (i+1)},\ldots, x_{md}).$$
 Then $\|Z_m\|_\mathbb{A}=1$. Since $\alpha > 1$, we have
\begin{eqnarray*}
|\langle \mathbb{S}_{i}Z_m,Z_m\rangle_{\mathbb{A}}|&=&\alpha^2 \left|\sum^n_{j=1,j\neq i}\sum^n_{k=1,k\neq i}\langle T_{jk}x_{mk} \mid x_{mj}\rangle_A \right |\\
&=& \alpha^2\left|\langle \mathbb{S}_{i}X_m,X_m\rangle_{\mathbb{A}}\right|\\
&>& \left|\langle \mathbb{S}_{i}X_m,X_m\rangle_{\mathbb{A}}\right|.
\end{eqnarray*}
So, we infer that
$$\omega_{\mathbb{A}}(\mathbb{S}_i)\geq \displaystyle\lim_{m\rightarrow +\infty}|\langle \mathbb{S}_{i}Z_m,Z_m\rangle_{\mathbb{A}}|\geq \displaystyle\lim_{m\rightarrow +\infty}|\langle \mathbb{S}_{i}X_m,X_m\rangle_{\mathbb{A}}|=\omega_{\mathbb{A}}(\mathbb{S}_i).$$
This implies that
\[\displaystyle\lim_{m\rightarrow +\infty}|\langle \mathbb{S}_{i}Z_m,Z_m\rangle_{\mathbb{A}}|=\omega_{\mathbb{A}}(\mathbb{S}_{i}).\]
This proves our claim.

Now, an easy calculation shows that $\langle \mathbb{M}_{i}Z_m,Z_m\rangle_{\mathbb{A}}=0.$ Therefore, for each $\beta \in [0, 2\pi)$,
\[\Re e\left ( e^{i\beta}  \langle Z_m,\mathbb{S}_{i}Z_m\rangle_{\mathbb{A}}  \langle \mathbb{M}_{i}Z_m,Z_m\rangle_{\mathbb{A}} \right )=0.\]
Therefore from Theorem \ref{mainortho} we have  $\mathbb{S}_{i} \perp_{\omega_\mathbb{A}} \mathbb{M}_{i}.$ So,
  $$\omega_{\mathbb{A}}(\mathbb{T})= \omega_{\mathbb{A}}(\mathbb{S}_i+1\times \mathbb{M}_i)\geq \omega_{\mathbb{A}}(\mathbb{S}_i) ~~\textit{for all}~~ i=1,2,\ldots,d. $$
Hence we conclude that
   \[\omega_{\mathbb{A}}(\mathbb{T})\geq \max \left \{ \omega_A(T_{ii}), \omega_{\mathbb{A}}(\mathbb{S}_{i}): 1\leq i\leq d\right \},\]
This completes the proof of the theorem.
\end{proof}

Based on Theorem \ref{th-1} we obtain the following inequality.

\begin{corollary}\label{cor-1}
Let $\mathbb{T}=(T_{ij})$ be a ${d\times d}$ operator matrix where $T_{ij}\in \mathcal{B}_{A^{1/2}}(\mathcal{H})$. Then,
\[\omega_{\mathbb{A}}(\mathbb{T})\geq \max \left \{ \omega_A(T_{kk}), \omega_{\mathbb{A}'}\left(\begin{array}{cc}
T_{ii}&T_{ij} \\
T_{ji}&T_{jj}
\end{array}\right): 1\leq k\leq d,1\leq i<j\leq d\right \}\]
where $\mathbb{A}'=\begin{pmatrix}A&0\\0&A\end{pmatrix}$.
\end{corollary}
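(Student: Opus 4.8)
The plan is to reduce everything to repeated application of Theorem \ref{th-1}. The diagonal estimates $\omega_{\mathbb{A}}(\mathbb{T})\geq \omega_A(T_{kk})$ for $1\leq k\leq d$ are already part of the conclusion of Theorem \ref{th-1}, so nothing new is needed there. The real content is the $2\times 2$ principal-block estimate, and the idea is to obtain it by successively zeroing out one index at a time until only the rows and columns indexed by $i$ and $j$ survive.

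Fix $i<j$ and set $K=\{1,\dots,d\}\setminus\{i,j\}=\{k_1,\dots,k_{d-2}\}$. First I would define a finite chain of operator matrices $\mathbb{T}=\mathbb{T}^{(0)},\mathbb{T}^{(1)},\dots,\mathbb{T}^{(d-2)}$, where $\mathbb{T}^{(m)}$ is obtained from $\mathbb{T}^{(m-1)}$ by replacing the $k_m$-th row and the $k_m$-th column with $O$. Each $\mathbb{T}^{(m)}$ is again a $d\times d$ matrix whose entries lie in $\mathcal{B}_{A^{1/2}}(\mathcal{H})$ (the zero operator belongs to this space), so Theorem \ref{th-1} applies to $\mathbb{T}^{(m-1)}$ with the single index $k_m$; since the associated matrix $\mathbb{S}_{k_m}$ of $\mathbb{T}^{(m-1)}$ is exactly $\mathbb{T}^{(m)}$, this gives $\omega_{\mathbb{A}}(\mathbb{T}^{(m-1)})\geq \omega_{\mathbb{A}}(\mathbb{T}^{(m)})$. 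Chaining these inequalities yields
\[
\omega_{\mathbb{A}}(\mathbb{T})\geq \omega_{\mathbb{A}}(\mathbb{T}^{(d-2)}),
\]
where $\mathbb{T}^{(d-2)}$ has all entries equal to $O$ except for the four operators $T_{ii},T_{ij},T_{ji},T_{jj}$ occupying the rows and columns indexed by $i$ and $j$.

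It then remains to identify $\omega_{\mathbb{A}}(\mathbb{T}^{(d-2)})$ with the $\mathbb{A}'$-numerical radius of the block $M=\left(\begin{smallmatrix} T_{ii} & T_{ij} \\ T_{ji} & T_{jj}\end{smallmatrix}\right)$. Here I would use the embedding of $\mathbb{H}$-vectors supported on the coordinates $i,j$: for an $\mathbb{A}'$-unit vector $u=(u_i,u_j)$, let $X\in\mathbb{H}$ have $x_i=u_i$, $x_j=u_j$ and all other coordinates zero. Then $\|X\|_{\mathbb{A}}=\|u\|_{\mathbb{A}'}=1$, and expanding $\langle \mathbb{T}^{(d-2)}X,X\rangle_{\mathbb{A}}$, where only the four surviving entries contribute, gives $\langle \mathbb{T}^{(d-2)}X,X\rangle_{\mathbb{A}}=\langle M u,u\rangle_{\mathbb{A}'}$. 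Taking the supremum over all such $u$ yields $\omega_{\mathbb{A}}(\mathbb{T}^{(d-2)})\geq \omega_{\mathbb{A}'}(M)$, which combined with the chain above produces the desired inequality; since $i<j$ and $k$ are arbitrary, taking the maximum completes the proof.

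I expect the only delicate point to be the bookkeeping of the iteration, namely verifying that the matrix $\mathbb{S}_{k_m}$ produced by Theorem \ref{th-1} coincides with $\mathbb{T}^{(m)}$ and that the hypothesis $T_{ij}\in \mathcal{B}_{A^{1/2}}(\mathcal{H})$ is preserved at every stage; the numerical-radius computation for the reduced block is routine once the support reduction is in place. If one prefers to avoid the iteration, one could instead imitate the orthogonality argument in the proof of Theorem \ref{th-1}, defining $\mathbb{S}$ to be $\mathbb{T}$ with all rows and columns outside $\{i,j\}$ replaced by $O$ and showing directly that $\mathbb{S}\perp_{\omega_{\mathbb{A}}}(\mathbb{T}-\mathbb{S})$ via Theorem \ref{mainortho}.
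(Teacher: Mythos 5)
Your proposal is correct and follows essentially the same route as the paper: both iterate Theorem \ref{th-1} to remove one index at a time until only the rows and columns indexed by $i$ and $j$ survive, and then read off the $\mathbb{A}'$-numerical radius of the $2\times 2$ block. The only cosmetic difference is that the paper compresses each intermediate matrix to a $(d-1)\times(d-1)$ matrix $\mathbb{R}_i$ via the identification $\omega_{\mathbb{A}}(\mathbb{S}_i)=\omega_{\mathbb{D}}(\mathbb{R}_i)$, whereas you keep everything $d\times d$ with zeroed rows and columns and perform the support-reduction identification only at the final step.
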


\begin{proof}
Let $\mathbb{D}=\textit{diag}(A,A,\ldots,A)$ be a ${(d-1) \times (d-1)}$ operator matrix and for $i\in\{1,\cdots,d\}$ we set the following ${(d-1) \times (d-1)}$ operator matrix
 	\[
\mathbb{R}_i=\begin{pmatrix}
T_{11} &\ldots&T_{1(i-1)} &A_{1(i+1)}&\ldots&T_{1d}\\
 \vdots &\ldots &\vdots &\vdots&  \ldots   & \vdots \\
T_{(i-1)1} &\ldots&T_{(i-1)(i-1)} &T_{(i-1)(i+1)}&\ldots&T_{(i-1)d}\\
T_{(i+1)1} &\ldots&T_{(i+1)(i-1)}  &T_{(i+1)(i+1)}&\ldots&T_{(i+1)d}\\
 \vdots &\ldots&\vdots &  \vdots   & \ldots &\vdots\\
T_{d1} &\ldots&T_{d(i-1)} &T_{d(i+1)}&\ldots&T_{dd}	
 \end{pmatrix}.
 \]
It can be seen that $\omega_{\mathbb{A}}(\mathbb{S}_i)=\omega_\mathbb{D}(\mathbb{R}_i)$ where $\mathbb{S}_i$ is defined as in Theorem \ref{th-1}. Moreover, by applying  Theorem \ref{th-1} repeatedly on $\mathbb{R}_i$ for each $i=1,2,\ldots,d$ we get our desired inequality.
\end{proof}

In the following lemma, we will show that A-numerical radius of semi-Hilbertican space operators satisfying weak A-unitary invariance property.

\begin{lemma}
Let $T\in \mathcal{B}_{A^{1/2}}(\mathcal{H})$. Then,
\begin{equation}\label{aunitary}
\omega_{A}(U^{\sharp}TU)=\omega_{A}(T),
\end{equation}
for every $A$-unitary operator $U\in \mathcal{B}_{A}(\mathcal{H})$.
\end{lemma}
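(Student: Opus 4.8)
The plan is to transport the entire computation to the genuine Hilbert space $\mathbf{R}(A^{1/2})$ by means of the representation $T\mapsto\widetilde{T}$ furnished by Proposition \ref{prop_arias}, and there to invoke the classical unitary invariance of the ordinary numerical radius.

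First I would record that $T\mapsto\widetilde{T}$ is multiplicative on $\mathcal{B}_{A^{1/2}}(\mathcal{H})$: if $R,S\in\mathcal{B}_{A^{1/2}}(\mathcal{H})$, then from $Z_AR=\widetilde{R}Z_A$ and $Z_AS=\widetilde{S}Z_A$ one gets $Z_A(RS)=\widetilde{R}\,\widetilde{S}Z_A$, so the uniqueness clause in Proposition \ref{prop_arias} forces $\widetilde{RS}=\widetilde{R}\,\widetilde{S}$. Since $U,U^{\sharp_A}\in\mathcal{B}_A(\mathcal{H})\subseteq\mathcal{B}_{A^{1/2}}(\mathcal{H})$ and $T\in\mathcal{B}_{A^{1/2}}(\mathcal{H})$, and the latter is an algebra, the product $U^{\sharp_A}TU$ is $A$-bounded with $\widetilde{U^{\sharp_A}TU}=\widetilde{U^{\sharp_A}}\,\widetilde{T}\,\widetilde{U}$. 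Combining this with Lemma \ref{lau}, which gives $\widetilde{U^{\sharp_A}}=(\widetilde{U})^{*}$, yields
\[
\widetilde{U^{\sharp_A}TU}=(\widetilde{U})^{*}\,\widetilde{T}\,\widetilde{U}.
\]

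Next I would show that $\widetilde{U}$ is a unitary on $\mathbf{R}(A^{1/2})$. The $A$-unitarity of $U$ reads $U^{\sharp_A}U=(U^{\sharp_A})^{\sharp_A}U^{\sharp_A}=P_A$. Applying the tilde map to $U^{\sharp_A}U=P_A$, and using multiplicativity together with Lemma \ref{lau}, gives $(\widetilde{U})^{*}\widetilde{U}=\widetilde{P_A}$; applying it to $(U^{\sharp_A})^{\sharp_A}U^{\sharp_A}=P_A$, and using $\widetilde{(U^{\sharp_A})^{\sharp_A}}=\widetilde{U}$ from Lemma \ref{lau}, gives $\widetilde{U}(\widetilde{U})^{*}=\widetilde{P_A}$. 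It then remains to identify $\widetilde{P_A}=I_{\mathbf{R}(A^{1/2})}$: since $A\in\mathcal{B}(\mathcal{H})^+$ and $P_A$ is the orthogonal projection onto $\overline{\mathcal{R}(A)}$ one has $AP_A=A$, hence $Z_AP_Ax=AP_Ax=Ax=Z_Ax$ for all $x\in\mathcal{H}$, and because $\mathcal{R}(Z_A)=\mathcal{R}(A)$ is dense in $\mathbf{R}(A^{1/2})$ this forces $\widetilde{P_A}=I_{\mathbf{R}(A^{1/2})}$. Consequently $(\widetilde{U})^{*}\widetilde{U}=\widetilde{U}(\widetilde{U})^{*}=I_{\mathbf{R}(A^{1/2})}$, so $\widetilde{U}$ is unitary.

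Finally I would close the argument with Lemma \ref{preprintkais}(2) and the standard invariance of the numerical radius under unitary conjugation on a Hilbert space:
\[
\omega_A(U^{\sharp_A}TU)=\omega\big(\widetilde{U^{\sharp_A}TU}\big)=\omega\big((\widetilde{U})^{*}\widetilde{T}\,\widetilde{U}\big)=\omega(\widetilde{T})=\omega_A(T).
\]
I expect the only genuinely delicate point to be the identification $\widetilde{P_A}=I_{\mathbf{R}(A^{1/2})}$, equivalently the verification that the two $A$-unitarity relations really yield a two-sided inverse for $\widetilde{U}$ rather than merely an isometry; the remainder is bookkeeping with the homomorphism $T\mapsto\widetilde{T}$ and the classical unitary invariance of $\omega(\cdot)$.
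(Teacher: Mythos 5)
Your proposal is correct and follows essentially the same route as the paper's own proof: transport everything to $\mathbf{R}(A^{1/2})$ via $T\mapsto\widetilde{T}$, use Lemma \ref{lau} to turn the $A$-unitarity relations into $(\widetilde{U})^{*}\widetilde{U}=\widetilde{U}(\widetilde{U})^{*}=I_{\mathbf{R}(A^{1/2})}$, and conclude by the classical unitary invariance of $\omega(\cdot)$ together with Lemma \ref{preprintkais}. The only difference is that you spell out two steps the paper leaves implicit, namely the multiplicativity $\widetilde{RS}=\widetilde{R}\,\widetilde{S}$ and the identification $\widetilde{P_A}=I_{\mathbf{R}(A^{1/2})}$ via $AP_A=A$ and density of $\mathcal{R}(A)$ in $\mathbf{R}(A^{1/2})$, both of which are correct.
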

\begin{proof}
Since $T\in \mathcal{B}_{A^{1/2}}(\mathcal{H})$ and $U\in \mathcal{B}_{A}(\mathcal{H})\subseteq\mathcal{B}_{A^{1/2}}(\mathcal{H})$, then by Proposition \ref{prop_arias} there exists a unique $\widetilde{T},\widetilde{U}\in \mathcal{B}(\mathbf{R}(A^{1/2}))$ such that $Z_AT =\widetilde{T}Z_A$ and $Z_AU=\widetilde{U}Z_A$. On the other hand, since $U\in \mathcal{B}_A(\mathcal{H})$ is an $A$-unitary operator, then
$$(U^{\sharp_A})^{\sharp_A} U^{\sharp_A}=U^{\sharp_A} U=P_A.$$
This gives
$$\widetilde{U^{\sharp_A} U}=\widetilde{(U^{\sharp_A})^{\sharp_A} U^{\sharp_A}}=\widetilde{P_A}.$$
So, it can be seen by using Lemma \ref{lau} that
$$\widetilde{U}^* \widetilde{U}=\widetilde{U} \widetilde{U}^*=I_{\mathbf{R}(A^{1/2})}.$$
Hence, $\widetilde{U}$ is an unitary operator on the Hilbert space $\mathbf{R}(A^{1/2})$. So, we get
$$\omega(\widetilde{U}^{*}\widetilde{T}\widetilde{U})=\omega(\widetilde{T}).$$
This implies, by Lemma \ref{lau} that
$$\omega(\widetilde{U^{\sharp}}\widetilde{T}\widetilde{U})=\omega(\widetilde{T}),$$
which in turn yields that
$$\omega(\widetilde{U^{\sharp}TU})=\omega(\widetilde{T}),$$
Therefore, we obtain \eqref{aunitary} by using Lemma \ref{preprintkais}.
\end{proof}

Next, we obtain a lower bound for $\mathbb{A}$-numerical radius of $2 \times 2$ operator matrices. To obtain this we need the following lemma.

\begin{lemma}\label{lem-1}
Let $\mathbb{T}=\left(\begin{array}{cc}
T_{11}&T_{12} \\
T_{21}&T_{22}
\end{array}\right)$ where $T_{ij}\in \mathcal{B}_{A^{1/2}}(\mathcal{H})$ and $\mathbb{A}=\left(\begin{array}{cc}
A&0\\
0&A
\end{array}\right)$. Then
\[\omega_\mathbb{A}(\mathbb{T})=\omega_\mathbb{A}\left(\begin{array}{cc}
T_{11}&iT_{12} \\
-iT_{21}&T_{22}
\end{array}\right).\]
\end{lemma}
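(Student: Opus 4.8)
The plan is to reduce the claimed identity to the classical unitary invariance of the numerical radius on a genuine Hilbert space, using the representation map $T\mapsto \widetilde{T}$. Write $\mathbb{S}=\bigl(\begin{smallmatrix} T_{11} & iT_{12}\\ -iT_{21} & T_{22}\end{smallmatrix}\bigr)$. Both $\mathbb{T}$ and $\mathbb{S}$ lie in $\mathcal{B}_{\mathbb{A}^{1/2}}(\mathcal{H}\oplus\mathcal{H})$ by Lemma~\ref{tilde2020} (with $d=2$), so by the second part of Lemma~\ref{preprintkais} we have $\omega_{\mathbb{A}}(\mathbb{T})=\omega(\widetilde{\mathbb{T}})$ and $\omega_{\mathbb{A}}(\mathbb{S})=\omega(\widetilde{\mathbb{S}})$, where both numerical radii on the right are now computed on the honest Hilbert space $\mathbf{R}(\mathbb{A}^{1/2})$. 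Hence it suffices to prove $\omega(\widetilde{\mathbb{T}})=\omega(\widetilde{\mathbb{S}})$.

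First I would record that the representation map is $\mathbb{C}$-linear: for $\mu\in\mathbb{C}$ and $S\in\mathcal{B}_{A^{1/2}}(\mathcal{H})$ one has $Z_A(\mu S)=\mu Z_A S=\mu\widetilde{S}Z_A$, so the uniqueness clause in Proposition~\ref{prop_arias} forces $\widetilde{\mu S}=\mu\widetilde{S}$. Combining this with Lemma~\ref{tilde2020} gives the explicit block forms $\widetilde{\mathbb{T}}=\bigl(\begin{smallmatrix} \widetilde{T_{11}} & \widetilde{T_{12}}\\ \widetilde{T_{21}} & \widetilde{T_{22}}\end{smallmatrix}\bigr)$ and $\widetilde{\mathbb{S}}=\bigl(\begin{smallmatrix} \widetilde{T_{11}} & i\widetilde{T_{12}}\\ -i\widetilde{T_{21}} & \widetilde{T_{22}}\end{smallmatrix}\bigr)$, both acting on $\mathbf{R}(\mathbb{A}^{1/2})=\mathbf{R}(A^{1/2})\oplus\mathbf{R}(A^{1/2})$.

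Next, I introduce the diagonal operator $V=\bigl(\begin{smallmatrix} I & O\\ O & iI\end{smallmatrix}\bigr)$ on $\mathbf{R}(A^{1/2})\oplus\mathbf{R}(A^{1/2})$, where $I=I_{\mathbf{R}(A^{1/2})}$. Since $\mathbf{R}(A^{1/2})$ is a genuine Hilbert space, $V$ is a genuine unitary, and a direct $2\times 2$ block multiplication gives $V^{*}\widetilde{\mathbb{T}}V=\widetilde{\mathbb{S}}$ (the off-diagonal entries pick up the factors $i$ and $-i$, while the diagonal entries are unchanged since $(-i)(i)=1$). As the numerical radius on a Hilbert space is invariant under unitary conjugation, $\omega(\widetilde{\mathbb{S}})=\omega(V^{*}\widetilde{\mathbb{T}}V)=\omega(\widetilde{\mathbb{T}})$, and feeding this back through Lemma~\ref{preprintkais} yields $\omega_{\mathbb{A}}(\mathbb{S})=\omega_{\mathbb{A}}(\mathbb{T})$, which is the assertion.

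The block computations are entirely routine; the only point requiring genuine care is the passage to $\mathbf{R}(\mathbb{A}^{1/2})$ and the recognition that it splits as the orthogonal sum $\mathbf{R}(A^{1/2})\oplus\mathbf{R}(A^{1/2})$, so that $V=\mathrm{diag}(I,iI)$ is a bona fide unitary there. This is precisely what makes the classical unitary-invariance argument applicable, and it is what lets me sidestep the projection factors $P_A$ that would otherwise appear if one tried to conjugate $\mathbb{T}$ by $\mathrm{diag}(I,iI)$ directly inside $\mathcal{B}_{\mathbb{A}}(\mathcal{H}\oplus\mathcal{H})$ via the $\mathbb{A}$-unitary invariance lemma proved just above.
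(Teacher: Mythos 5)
Your proof is correct, but it follows a different route from the paper's. The paper takes $\mathbb{U}=\left(\begin{smallmatrix} -iI & O\\ O & I\end{smallmatrix}\right)$, verifies via Lemma \ref{lemmajdid01} that $\mathbb{U}^{\sharp_{\mathbb{A}}}\mathbb{U}=(\mathbb{U}^{\sharp_{\mathbb{A}}})^{\sharp_{\mathbb{A}}}\mathbb{U}^{\sharp_{\mathbb{A}}}=P_{\mathbb{A}}$, so that $\mathbb{U}$ is $\mathbb{A}$-unitary, and then invokes the weak $\mathbb{A}$-unitary invariance identity \eqref{aunitary} just established, together with the observation $\omega_{\mathbb{A}}(\mathbb{T})=\omega_{\mathbb{A}}(P_{\mathbb{A}}\mathbb{T})$ to absorb the projection factors. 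You instead push everything through the representation $\mathbb{T}\mapsto\widetilde{\mathbb{T}}$ into the genuine Hilbert space $\mathbf{R}(\mathbb{A}^{1/2})=\mathbf{R}(A^{1/2})\oplus\mathbf{R}(A^{1/2})$, conjugate by the honest unitary $V=\mathrm{diag}(I,iI)$ there, and appeal to classical unitary invariance of $\omega(\cdot)$; the block computation $V^{*}\widetilde{\mathbb{T}}V=\widetilde{\mathbb{S}}$ checks out, and the linearity $\widetilde{\mu S}=\mu\widetilde{S}$ does follow from the uniqueness clause of Proposition \ref{prop_arias} as you say. In effect you have inlined the proof of the invariance lemma rather than quoting it: your version is more self-contained at this point and sidesteps the $P_{\mathbb{A}}$ bookkeeping entirely (a subtlety you rightly flag), while the paper's version exercises the $\mathbb{A}$-unitary machinery it has just built and keeps the argument entirely inside $\mathcal{B}_{\mathbb{A}}(\mathcal{H}\oplus\mathcal{H})$. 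Both rest ultimately on the same dictionary between $\omega_{\mathbb{A}}$ and $\omega$ supplied by Lemmas \ref{preprintkais} and \ref{tilde2020}.
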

\begin{proof}
Let $\mathbb{U}=\left(\begin{array}{cc}
-iI&O \\
O&I
\end{array}\right).$ By using Lemma \ref{lemmajdid01}, one gets
$$\mathbb{U}^{\sharp_{\mathbb{A}}}\mathbb{U}=\left(\begin{array}{cc}
iP_A&O \\
O&P_A
\end{array}\right)\left(\begin{array}{cc}
-iI&O \\
O&I
\end{array}\right)=\left(\begin{array}{cc}
P_A&O \\
O&P_A
\end{array}\right)=P_{\mathbb{A}},$$
where $P_{\mathbb{A}}$ is denoted to be the orthogonal projection onto $\overline{\mathcal{R}(\mathbb{A})}$. Similarly, we show that $(\mathbb{U}^{\sharp_\mathbb{A}})^{\sharp_\mathbb{A}} \mathbb{U}^{\sharp_\mathbb{A}}=P_\mathbb{A}.$ Hence, $\mathbb{U}$ is an $\mathbb{A}$-unitary operator. So, the desired equality follows immediately by using \eqref{aunitary} and remarking that $\omega_\mathbb{A}(\mathbb{T})=\omega_\mathbb{A}(P_{\mathbb{A}}\mathbb{T})$.
\end{proof}

Now we are in a position to prove the following lower bound for $\mathbb{A}$-numerical radius of $2 \times 2$ operator matrices where $\mathbb{A}= \textit{diag}(A,A)$.

\begin{theorem}\label{th-2}
Let $\mathbb{T}=\left(\begin{array}{cc}
T_{11}&T_{12} \\
T_{21}&T_{22}
\end{array}\right)$ where $T_{ij}\in \mathcal{B}_{A^{1/2}}(\mathcal{H})$ and $\mathbb{A}=\left(\begin{array}{cc}
A&0\\
0&A
\end{array}\right)$. Then
\[\omega_{\mathbb{A}}(\mathbb{T})\geq \max\left \{\omega_A(T_{11}),\omega_A(T_{22}), \alpha, \beta \right\}\]
where
$$\alpha=\sqrt{m^2_A\left (\frac{T_{11}+T_{22}}{2}\right)+\omega^2_A\left(\frac{T_{12}+T_{21}}{2}\right)},$$
$$\beta=\sqrt{m^2_A\left(\frac{T_{11}+T_{22}}{2}\right)+\omega^2_A\left(\frac{T_{12}-T_{21}}{2}\right)}.$$
\end{theorem}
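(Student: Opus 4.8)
The plan is to establish the four lower bounds in the maximum separately. The bounds $\omega_{\mathbb{A}}(\mathbb{T})\geq\omega_A(T_{11})$ and $\omega_{\mathbb{A}}(\mathbb{T})\geq\omega_A(T_{22})$ are immediate from Theorem \ref{th-1} applied to the present $2\times 2$ matrix, so the genuine content lies in proving $\omega_{\mathbb{A}}(\mathbb{T})\geq\alpha$ and $\omega_{\mathbb{A}}(\mathbb{T})\geq\beta$. Throughout I write $D=\tfrac{T_{11}+T_{22}}{2}$ and $S=\tfrac{T_{12}+T_{21}}{2}$, both of which lie in $\mathcal{B}_{A^{1/2}}(\mathcal{H})$ since it is a subalgebra, so that $m_A(D)$ and $\omega_A(S)$ are well defined.

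For the $\alpha$-bound I would test the quadratic form of $\mathbb{T}$ against the $\mathbb{A}$-unit vectors $X_n^{\pm}=\tfrac{1}{\sqrt{2}}(x_n,\pm x_n)$, where $\{x_n\}$ is a sequence of $A$-unit vectors in $\mathcal{H}$ chosen so that $|\langle Sx_n\mid x_n\rangle_A|\to\omega_A(S)$; note $\|X_n^{\pm}\|_{\mathbb{A}}^2=\tfrac{1}{2}(\|x_n\|_A^2+\|x_n\|_A^2)=1$. A direct computation, using that the $\pm 1$ coefficients are real, gives
$$\langle\mathbb{T}X_n^{\pm},X_n^{\pm}\rangle_{\mathbb{A}}=\langle Dx_n\mid x_n\rangle_A\pm\langle Sx_n\mid x_n\rangle_A=:b_n\pm c_n,$$
so that $\omega_{\mathbb{A}}(\mathbb{T})\geq|b_n\pm c_n|$ for both signs.

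The crucial step is to combine the two signs. Since
$$\max\{|b_n+c_n|^2,|b_n-c_n|^2\}\geq\tfrac{1}{2}\big(|b_n+c_n|^2+|b_n-c_n|^2\big)=|b_n|^2+|c_n|^2,$$
the cross term $2\,\Re e(\overline{b_n}\,c_n)$ cancels and we obtain $\omega_{\mathbb{A}}(\mathbb{T})^2\geq|b_n|^2+|c_n|^2$. Now $|b_n|=|\langle Dx_n\mid x_n\rangle_A|\geq m_A(D)$ because $\|x_n\|_A=1$ and $m_A(D)$ is the infimum of $|\langle Dx\mid x\rangle_A|$ over $A$-unit vectors, while $|c_n|\to\omega_A(S)$ by the choice of $\{x_n\}$; letting $n\to\infty$ yields $\omega_{\mathbb{A}}(\mathbb{T})^2\geq m_A^2(D)+\omega_A^2(S)=\alpha^2$. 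This cancellation is the one genuinely delicate point: one quantity is an infimum (the Crawford number) and the other a supremum (the numerical radius), so they cannot be extremized along a single sequence, and the $\pm$ averaging is precisely what removes this obstruction while keeping a common sequence $\{x_n\}$.

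Finally, the $\beta$-bound follows by applying the $\alpha$-bound to the matrix $\mathbb{T}'=\left(\begin{array}{cc}T_{11}&iT_{12}\\-iT_{21}&T_{22}\end{array}\right)$, which satisfies $\omega_{\mathbb{A}}(\mathbb{T}')=\omega_{\mathbb{A}}(\mathbb{T})$ by Lemma \ref{lem-1}. Its diagonal average is again $D$, whereas its off-diagonal average is $\tfrac{iT_{12}-iT_{21}}{2}=i\,\tfrac{T_{12}-T_{21}}{2}$, whose $A$-numerical radius equals $\omega_A\big(\tfrac{T_{12}-T_{21}}{2}\big)$ since $\omega_A(i\,\cdot)=\omega_A(\cdot)$. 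Hence $\omega_{\mathbb{A}}(\mathbb{T})=\omega_{\mathbb{A}}(\mathbb{T}')\geq\beta$, and taking the maximum of the four established bounds completes the argument.
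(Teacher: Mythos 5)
Your proof is correct and follows essentially the same route as the paper: the same test vectors $\tfrac{1}{\sqrt2}(x_n,\pm x_n)$ along a sequence maximizing $|\langle \tfrac{T_{12}+T_{21}}{2}x_n\mid x_n\rangle_A|$, the same reduction of the $\beta$-bound to the $\alpha$-bound via Lemma \ref{lem-1}, and the same cancellation of the cross term (you phrase it via the parallelogram identity $\max\{|b+c|^2,|b-c|^2\}\geq |b|^2+|c|^2$, while the paper observes that one of the two signed cross terms has nonnegative real part and discards it --- these are the same idea, yours stated slightly more cleanly). The only cosmetic difference is that you invoke Theorem \ref{th-1} for the diagonal bounds where the paper repeats the direct computation.
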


\begin{proof}
First we prove that $\omega_{\mathbb{A}}(T)\geq \omega_A(T_{ii})$ for all $i=1,2.$ Let $\{x_n\}$ be a sequence of $A$-unit vectors in $\mathcal{H}$ such that
\[\displaystyle\lim_{n\rightarrow +\infty}|\langle T_{11}x_n\mid x_n\rangle_A|=\omega_A(T_{11}).\]
Let $X_n=(x_n,0)\in \mathcal{H}\oplus \mathcal{H}$. Then $\|X_n\|_\mathbb{A}=\|x_n\|_A=1.$ Therefore, from an easy calculation we have
\begin{align*}
|\langle T_{11}X_n,X_n\rangle_{\mathbb{A}}|
 &= |\langle T_{11}x_n\mid x_n\rangle_A|\\
\Rightarrow \omega_{\mathbb{A}}(\mathbb{T}) &\geq |\langle T_{11}x_n\mid x_n\rangle_A|\\
\Rightarrow \omega_{\mathbb{A}}(\mathbb{T}) &\geq \displaystyle\lim_{n\rightarrow +\infty}|\langle T_{11}x_n\mid x_n\rangle_A|\\
\Rightarrow \omega_{\mathbb{A}}(\mathbb{T}) &\geq \omega_A(T_{11}).
\end{align*}
Similarly, we can show that $\omega_{\mathbb{A}}(\mathbb{T})\geq \omega_A(T_{22}).$

Now we show that $\omega_{\mathbb{A}}(\mathbb{T})\geq \alpha$ and $\omega_{\mathbb{A}}(\mathbb{T})\geq \beta$. We consider $\mathbb{T}=\mathbb{P}+\mathbb{S}$ where $\mathbb{P}=\left(\begin{array}{cc}
T_{11}&O\\
O&T_{22}
\end{array}\right)$ and $\mathbb{S}=\left(\begin{array}{cc}
O&T_{12} \\
T_{21}&O
\end{array}\right).$
Let $\{z_n\}$ be a sequence of $A$-unit vectors in $\mathcal{H}$ such that
\[\displaystyle\lim_{n\rightarrow +\infty}\left|\langle (\tfrac{T_{12}+T_{21}}{2})z_n\mid z_n\rangle_A\right|=\omega_A\left(\tfrac{T_{12}+T_{21}}{2}\right).\]
Let $Z_{1n}=\frac{1}{\sqrt{2}}(z_n,z_n)$ and  $Z_{2n}=\frac{1}{\sqrt{2}}(-z_n,z_n)$ be in $\mathcal{H}\oplus \mathcal{H}$. Then by an easy calculation we see that
\[\langle \mathbb{S}Z_{1n},Z_{1n}\rangle_{\mathbb{A}}=-\langle \mathbb{S}Z_{2n},Z_{2n}\rangle_{\mathbb{A}}=\langle \tfrac{T_{12}+T_{21}}{2}z_n\mid z_n\rangle_A,\]
\[\langle \mathbb{P}Z_{1n},Z_{1n}\rangle_{\mathbb{A}}=\langle \mathbb{P}Z_{2n},Z_{2n}\rangle_{\mathbb{A}}=\langle \tfrac{T_{11}+T_{22}}{2}z_n\mid z_n\rangle_A.\]
From this we observe that either one of the following holds:
$$(i)~~ \Re e\left \{  \langle \mathbb{P}Z_{1n},Z_{1n}\rangle_{\mathbb{A}}  \overline {\langle \mathbb{S}Z_{1n},Z_{1n}\rangle_{\mathbb{A}} } \right \}\geq 0.$$
$$(ii)~~ \Re e\left \{  \langle \mathbb{P}Z_{2n},Z_{2n}\rangle_{\mathbb{A}}  \overline {\langle \mathbb{S}Z_{2n},Z_{2n}\rangle_{\mathbb{A}} } \right \}\geq 0.$$
Without loss of generality we assume that $(i)$ holds. Then, we have
\begin{align*}
\omega^2_\mathbb{A}(\mathbb{T})
&=\omega^2_\mathbb{A}(\mathbb{P}+\mathbb{S})\\
&\geq |\langle(\mathbb{P}+\mathbb{S})Z_{1n},Z_{1n}\rangle_{\mathbb{A}}|^2\\
&= |\langle \mathbb{P}Z_{1n},Z_{1n}\rangle_{\mathbb{A}}+\langle \mathbb{S}Z_{1n},Z_{1n}\rangle_{\mathbb{A}}|^2\\
&= |\langle \mathbb{P}Z_{1n},Z_{1n}\rangle_{\mathbb{A}}|^2+|\langle \mathbb{S}Z_{1n},Z_{1n}\rangle_{\mathbb{A}}|^2+2\Re e\left \{  \langle \mathbb{P}Z_{1n},Z_{1n}\rangle_{\mathbb{A}}  \overline {\langle \mathbb{S}Z_{1n},Z_{1n}\rangle_{\mathbb{A}} } \right \}\\
&\geq |\langle \mathbb{P}Z_{1n},Z_{1n}\rangle_{\mathbb{A}}|^2+|\langle \mathbb{S}Z_{1n},Z_{1n}\rangle_{\mathbb{A}}|^2\\
&= \left|\langle \left(\tfrac{T_{11}+T_{22}}{2}\right)z_n\mid z_n\rangle_A\right|^2+\left|\langle \left(\tfrac{T_{12}+T_{21}}{2}\right)z_n\mid z_n\rangle_A\right|^2\\
&\geq m^2_A\left (\tfrac{T_{11}+T_{22}}{2}\right)+\left|\langle \left(\tfrac{T_{12}+T_{21}}{2}\right)z_n\mid z_n\rangle_A\right|^2\\
&\geq m^2_A\left (\tfrac{T_{11}+T_{22}}{2}\right)+\displaystyle\lim_{n\rightarrow +\infty}\left|\langle \left(\tfrac{T_{12}+T_{21}}{2}\right)z_n\mid z_n\rangle_A\right|^2\\
&= m^2_A\left (\tfrac{T_{11}+T_{22}}{2}\right)+\omega^2_A\left(\tfrac{T_{12}+T_{21}}{2}\right)\\
\Rightarrow \omega_{\mathbb{A}}(\mathbb{T}) &\geq \sqrt{m^2_A\left (\tfrac{T_{11}+T_{22}}{2}\right)+\omega^2_A\left(\tfrac{T_{12}+T_{21}}{2}\right)}.
\end{align*}
To show $\omega_{\mathbb{A}}(\mathbb{T})\geq \beta$, we consider the operator matrix $\left(\begin{array}{cc}
T_{11}&iT_{12} \\
-iT_{21}&T_{22}
\end{array}\right).$ By replacing $T_{12},T_{21}$ by $iT_{12},-iT_{21}$ respectively in the above last inequality, and by using Lemma \ref{lem-1} we have
\begin{align*}
\omega_{\mathbb{A}}(\mathbb{T}) &\geq \sqrt{m^2_A\left (\tfrac{T_{11}+T_{22}}{2}\right)+\omega^2_A\left(\tfrac{iT_{12}-iT_{21}}{2}\right)}\\
&=\sqrt{m^2_A\left (\tfrac{T_{11}+T_{22}}{2}\right)+\omega^2_A\left(\tfrac{T_{12}-T_{21}}{2}\right)}.
\end{align*}
Therefore, we conclude that
\[\omega_{\mathbb{A}}(T)\geq \max\left \{\omega_A(T_{11}),\omega_A(T_{22}), \alpha, \beta \right\}.\]
Hence, completes the proof of the theorem.
\end{proof}

\begin{remark}
Here we would like to remark that the acquired inequality in Theorem \ref{th-2} generalized as well as improves on the inequality obtained in \cite[Th. 3.1]{malpaulsen}.
\end{remark}

The following corollary is an immediate consequence of Corollary \ref{cor-1} and Theorem \ref{th-2}.
\begin{corollary}\label{cor-2}
Let $\mathbb{T}=(T_{ij})$ be a ${d \times d}$ operator matrix where $T_{ij}\in \mathcal{B}_{A^{1/2}}(\mathcal{H})$. Then,
\[\omega_{\mathbb{A}}(\mathbb{T})\geq \max \left \{ \omega_A(T_{kk}), \alpha_{ij},\beta_{ij} : 1\leq k\leq d, 1\leq i<j \leq d\right \}\]
where
$$\alpha_{ij}=\sqrt{m^2_A\left (\tfrac{T_{ii}+T_{jj}}{2}\right)+\omega^2_A\left(\tfrac{T_{ij}+T_{ji}}{2}\right)},$$
$$\beta_{ij}=\sqrt{m^2_A\left(\tfrac{T_{ii}+T_{jj}}{2}\right)+\omega^2_A\left(\tfrac{T_{ij}-T_{ji}}{2}\right)}.$$
\end{corollary}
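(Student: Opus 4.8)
The plan is to obtain this inequality purely by chaining together the two previously established results, Corollary \ref{cor-1} and Theorem \ref{th-2}, so that all the genuine analytic content is already in place and what remains is a transitivity argument. First I would fix an arbitrary index $k\in\{1,\dots,d\}$ together with an arbitrary pair of indices $i<j$. By Corollary \ref{cor-1} applied to $\mathbb{T}$, we immediately have $\omega_{\mathbb{A}}(\mathbb{T})\geq \omega_A(T_{kk})$ as well as
\[
\omega_{\mathbb{A}}(\mathbb{T})\geq \omega_{\mathbb{A}'}\left(\begin{array}{cc} T_{ii}&T_{ij}\\ T_{ji}&T_{jj}\end{array}\right),
\]
where $\mathbb{A}'=\textit{diag}(A,A)$.

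Next I would apply Theorem \ref{th-2} to the $2\times 2$ operator matrix $\left(\begin{array}{cc} T_{ii}&T_{ij}\\ T_{ji}&T_{jj}\end{array}\right)$, all of whose entries lie in $\mathcal{B}_{A^{1/2}}(\mathcal{H})$, and whose governing diagonal operator matrix is exactly $\mathbb{A}'$. Performing the substitutions $T_{11}\mapsto T_{ii}$, $T_{22}\mapsto T_{jj}$, $T_{12}\mapsto T_{ij}$, $T_{21}\mapsto T_{ji}$ in the conclusion of Theorem \ref{th-2}, the quantities $\alpha$ and $\beta$ appearing there become precisely $\alpha_{ij}$ and $\beta_{ij}$ as defined in the statement. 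This yields
\[
\omega_{\mathbb{A}'}\left(\begin{array}{cc} T_{ii}&T_{ij}\\ T_{ji}&T_{jj}\end{array}\right)\geq \max\{\omega_A(T_{ii}),\omega_A(T_{jj}),\alpha_{ij},\beta_{ij}\}.
\]
Combining this with the preceding display gives $\omega_{\mathbb{A}}(\mathbb{T})\geq \alpha_{ij}$ and $\omega_{\mathbb{A}}(\mathbb{T})\geq \beta_{ij}$ for every pair $i<j$.

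Finally, since the index $k$ and the pair $i<j$ were chosen arbitrarily, I would take the maximum of all the resulting lower bounds $\omega_A(T_{kk})$, $\alpha_{ij}$, $\beta_{ij}$ over $1\leq k\leq d$ and $1\leq i<j\leq d$, which produces the claimed inequality. I do not expect a substantive obstacle, as both ingredients are already available and the argument is essentially transitivity of $\geq$; the only points requiring care are bookkeeping in nature, namely verifying that the $\alpha_{ij},\beta_{ij}$ obtained by index substitution in Theorem \ref{th-2} match verbatim those in the statement, and confirming that the inner $2\times 2$ block norm $\omega_{\mathbb{A}'}$ supplied by Corollary \ref{cor-1} is exactly the $\mathbb{A}'$-numerical radius to which Theorem \ref{th-2} applies.
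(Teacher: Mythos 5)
Your proposal is correct and matches the paper's own justification: the paper states this corollary as an immediate consequence of Corollary \ref{cor-1} and Theorem \ref{th-2}, which is precisely the chaining argument you describe. The index substitution and the identification of the governing diagonal matrix $\mathbb{A}'$ work exactly as you anticipate, so no further detail is needed.
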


\begin{remark}
Here we remark that the inequality in Corollary \ref{cor-2} generalised and improves on the existing inequality in \cite[Th. 3.3]{malpaulsen}.
The inequality in \cite[Th. 3.3]{malpaulsen} follows from  Corollary \ref{cor-2} by considering $A=I.$
\end{remark}

Our next result reads as follows.

\begin{theorem}\label{th-3}
Let $\mathbb{T}=(T_{ij})$ be a ${d \times d}$ operator matrix where $T_{ij}\in \mathcal{B}_{A^{1/2}}(\mathcal{H})$ and $T_{ij}=O$ when $i>j$, i.e.
 	\[	\mathbb{T}= \begin{pmatrix}
	T_{11} & T_{12} &T_{13} &\ldots&T_{1d}  \\
	O & T_{22}   & T_{23} &\ldots  &T_{2d} \\
	\vdots  &\ddots&\ddots& \ddots&\vdots\\
	\vdots  & &\ddots& \ddots&T_{(n-1)d}\\
	O&\ldots&\ldots&O&T_{dd}
	\end{pmatrix}.\]
 Then, we have
\[\omega_{\mathbb{A}}(\mathbb{T})\geq \max \left \{ \omega_A(T_{kk}), \tfrac{\|T_{ij}\|_A}{2}: 1\leq k\leq d, 1\leq i<j\leq d\right \}.\]
\end{theorem}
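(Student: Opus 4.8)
The plan is to prove the two lower bounds separately, one for each family of quantities appearing inside the maximum. The bounds $\omega_{\mathbb{A}}(\mathbb{T})\geq \omega_A(T_{kk})$, $1\leq k\leq d$, are obtained exactly as in Theorem \ref{th-1}: choosing a sequence of $A$-unit vectors $\{x_n\}$ with $|\langle T_{kk}x_n\mid x_n\rangle_A|\to\omega_A(T_{kk})$ and testing against $X_n\in\mathbb{H}$ having $x_n$ in the $k$-th slot and $0$ elsewhere gives $\langle \mathbb{T}X_n,X_n\rangle_{\mathbb{A}}=\langle T_{kk}x_n\mid x_n\rangle_A$, so I would simply invoke that theorem (its one-line computation is unaffected by the triangular structure). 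The substantive part is the off-diagonal estimate $\omega_{\mathbb{A}}(\mathbb{T})\geq \tfrac{1}{2}\|T_{ij}\|_A$ for each $i<j$.

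Fix such a pair $i<j$; if $\|T_{ij}\|_A=0$ there is nothing to prove, so assume it is positive. By the definition of the semi-norm in \eqref{aseminorm}, first I would pick $A$-unit vectors $z_n$ with $\|T_{ij}z_n\|_A\to\|T_{ij}\|_A$, and for $n$ large (so that $\|T_{ij}z_n\|_A\neq 0$) set $u_n=\|T_{ij}z_n\|_A^{-1}\,T_{ij}z_n$, which is again an $A$-unit vector and satisfies the key identity $\langle T_{ij}z_n\mid u_n\rangle_A=\|T_{ij}z_n\|_A$. Then I would test the $\mathbb{A}$-numerical radius against the vector $X_n\in\mathbb{H}$ whose $i$-th coordinate is $\tfrac{\varepsilon}{\sqrt 2}u_n$, whose $j$-th coordinate is $\tfrac{1}{\sqrt 2}z_n$, and whose remaining coordinates vanish, where $\varepsilon\in\mathbb{T}$ is a unimodular phase to be chosen. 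Since $\|u_n\|_A=\|z_n\|_A=1$, one checks at once that $\|X_n\|_{\mathbb{A}}=1$.

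Because only the $i$-th and $j$-th coordinates of $X_n$ are nonzero and $T_{ji}=O$ by the upper-triangular hypothesis, expanding $\langle\mathbb{T}X_n,X_n\rangle_{\mathbb{A}}=\sum_{k,l}\langle T_{kl}(X_n)_l\mid (X_n)_k\rangle_A$ collapses to
$$\langle\mathbb{T}X_n,X_n\rangle_{\mathbb{A}}=\tfrac{1}{2}\big(\langle T_{ii}u_n\mid u_n\rangle_A+\langle T_{jj}z_n\mid z_n\rangle_A\big)+\tfrac{\overline{\varepsilon}}{2}\|T_{ij}z_n\|_A.$$
Writing $a_n$ for the bracketed diagonal contribution and $b_n=\|T_{ij}z_n\|_A>0$, this is $\tfrac{1}{2}(a_n+\overline{\varepsilon}\,b_n)$. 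The one genuine manoeuvre is to choose the phase $\varepsilon\in\mathbb{T}$ so that $\overline{\varepsilon}$ aligns with $a_n$ (take $\overline{\varepsilon}=a_n/|a_n|$, or any $\varepsilon$ if $a_n=0$), which forces $|a_n+\overline{\varepsilon}\,b_n|=|a_n|+b_n$. Hence
$$\omega_{\mathbb{A}}(\mathbb{T})\geq|\langle\mathbb{T}X_n,X_n\rangle_{\mathbb{A}}|=\tfrac{1}{2}\big(|a_n|+b_n\big)\geq\tfrac{1}{2}b_n=\tfrac{1}{2}\|T_{ij}z_n\|_A,$$
and letting $n\to+\infty$ yields $\omega_{\mathbb{A}}(\mathbb{T})\geq\tfrac{1}{2}\|T_{ij}\|_A$. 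Combining with the diagonal bounds gives the claimed maximum.

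I expect the main obstacle to be purely the phase bookkeeping: the diagonal blocks $T_{ii},T_{jj}$ inject an uncontrolled complex number $a_n$ into $\langle\mathbb{T}X_n,X_n\rangle_{\mathbb{A}}$, and the whole point is that the free unimodular parameter $\varepsilon$ lets one rotate the off-diagonal term $\overline{\varepsilon}\,b_n$ into constructive alignment with $a_n$ instead of suffering cancellation. Checking that the normalization and the identity $\langle T_{ij}z_n\mid u_n\rangle_A=\|T_{ij}z_n\|_A$ persist in the semi-inner-product setting is routine. As an alternative to the direct construction, one could first apply Corollary \ref{cor-1} to reduce to the $2\times 2$ block $\left(\begin{smallmatrix}T_{ii}&T_{ij}\\ O&T_{jj}\end{smallmatrix}\right)$ and run the same phase argument there, but the coordinate construction above bypasses that intermediate reduction.
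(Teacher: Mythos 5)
Your argument is correct, and the off-diagonal estimate is obtained by a genuinely different route from the paper's. The paper first reduces to the $2\times 2$ block $\left(\begin{smallmatrix}T_{ii}&T_{ij}\\ O&T_{jj}\end{smallmatrix}\right)$ via Corollary \ref{cor-1}, splits it as $\mathbb{P}+\mathbb{S}$ with $\mathbb{P}=\left(\begin{smallmatrix}O&T_{ij}\\ O&O\end{smallmatrix}\right)$, invokes the fact that $\mathbb{A}'\mathbb{P}^2=O$ forces $\omega_{\mathbb{A}'}(\mathbb{P})=\tfrac12\|T_{ij}\|_A$ (from \cite[Cor. 2.2]{kais01}), and then establishes $\mathbb{P}\perp_{\omega_{\mathbb{A}'}}\mathbb{S}$ through the orthogonality characterization of Theorem \ref{mainortho}, using two test sequences $Z_{1m},Z_{2m}$ differing by a sign so that at least one of them has the required nonnegative real part. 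Your proof replaces all of that machinery with a single direct computation: the test vectors are essentially the same ($T_{ij}z_n$ in the $i$-th slot, $z_n$ in the $j$-th), but the free unimodular phase $\varepsilon$ lets you force constructive alignment $|a_n+\overline{\varepsilon}b_n|=|a_n|+b_n$ outright, rather than arguing via orthogonality that the diagonal perturbation cannot decrease the numerical radius. What you lose is the illustrative point of the section --- the paper deliberately showcases $\omega_{\mathbb{A}}$-orthogonality as the engine of the inequality --- and what you gain is a shorter, self-contained argument that needs neither Theorem \ref{mainortho} nor the external nilpotency lemma, and that in passing records the slightly stronger bound $\omega_{\mathbb{A}}(\mathbb{T})\geq\tfrac12\left(\liminf_n|a_n|+\|T_{ij}\|_A\right)$. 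The only points worth making explicit in a write-up are the ones you already flagged: $\varepsilon$ may depend on $n$ (harmless, since the bound is taken for each $n$ before passing to the limit), and the vanishing of the $(j,i)$ term uses precisely the upper-triangular hypothesis $T_{ji}=O$.
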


\begin{proof}
We have from Corollary \ref{cor-1} that
\[\omega_{\mathbb{A}}(\mathbb{T})\geq \max \left \{ \omega_A(T_{kk}), \omega_{\mathbb{A}'}\left(\begin{array}{cc}
T_{ii}&T_{ij} \\
O&T_{jj}
\end{array}\right): 1\leq k\leq d,1\leq i<j\leq d\right \}\]
where $\mathbb{A}'=\begin{pmatrix}A&0\\0&A\end{pmatrix}$. So, to prove this theorem we only show that
$$\omega_{\mathbb{A}'}\left(\begin{array}{cc}
T_{ii}&T_{ij} \\
O&T_{jj}
\end{array}\right)\geq \frac{\|T_{ij}\|_A}{2}.$$
Let us assume that $\mathbb{P}=\left(\begin{array}{cc}
O&T_{ij} \\
O&O
\end{array}\right)$ and $\mathbb{S}=\left(\begin{array}{cc}
T_{ii}&O \\
O&T_{jj}
\end{array}\right)$. It is not difficult to verify that $\|\mathbb{P}\|_{\mathbb{A}'}=\|T_{ij}\|_A$. Moreover, since $\mathbb{A}'\mathbb{P}^2=O$, then by \cite[Cor. 2.2]{kais01} we have $\omega_{\mathbb{A}'}(\mathbb{P})=\frac{\|\mathbb{P}\|_{\mathbb{A}'}}{2}=\frac{\|T_{ij}\|_A}{2}$. We claim that $\mathbb{P}\perp_{\omega_{\mathbb{A}'}} \mathbb{S}.$ In view of \eqref{aseminorm}, there exists a sequence of $A$-unit vectors $\{x_m\}$ in $\mathcal{H}$ such that
\[\displaystyle\lim_{m \rightarrow +\infty}\|T_{ij}x_m\|_A=\|T_{ij}\|_A.\]
Let $Z_{1m}=\frac{1}{\sqrt{2}\|T_{ij}\|_A}(T_{ij}x_m,\|T_{ij}\|_Ax_m)$ and $Z_{2m}=\frac{1}{\sqrt{2}\|T_{ij}\|_A}(-T_{ij}x_m,\|T_{ij}\|_Ax_m)$ be in $\mathcal{H}\oplus \mathcal{H}$. Then it can be checked that, for the sequences of $\mathbb{A'}$-unit vectors $\{Z_{1m}\}$ and $\{Z_{2m}\},$
\begin{align*}
\displaystyle\lim_{m \rightarrow +\infty}\langle \mathbb{P} Z_{1m},Z_{1m}\rangle_{\mathbb{A}'}=\frac{\|T_{ij}\|_A}{2}, i.e. \displaystyle\lim_{m \rightarrow +\infty}|\langle \mathbb{P} Z_{1m},Z_{1m}\rangle_{\mathbb{A}'}|=\omega_{\mathbb{A}'}(\mathbb{P})\\
\displaystyle\lim_{m \rightarrow +\infty}\langle \mathbb{P} Z_{2m},Z_{2m}\rangle_{\mathbb{A}'}=\frac{-\|T_{ij}\|_A}{2}, i.e. \displaystyle\lim_{m \rightarrow +\infty}|\langle \mathbb{P} Z_{2m},Z_{2m}\rangle_{\mathbb{A}'}|=\omega_{\mathbb{A}'}(\mathbb{P})\\
\end{align*}
Also we have $\langle \mathbb{S}Z_{1m},Z_{1m}\rangle_{\mathbb{A}'}=\langle \mathbb{S} Z_{2m},Z_{2m}\rangle_{\mathbb{A}'}.$ Therefore for any $\beta \in [0,2\pi)$, either one of the following holds:
$$(i)~~ \Re e\left \{ e^{i\beta} \langle Z_{1m},\mathbb{P}Z_{1m}\rangle_{\mathbb{A}'}   {\langle \mathbb{S}Z_{1m},Z_{1m}\rangle_{\mathbb{A}'} } \right \}\geq 0.$$
$$(ii)~~ \Re e\left \{  e^{i\beta} \langle Z_{2m},\mathbb{P}Z_{2m}\rangle_{\mathbb{A}'}  {\langle \mathbb{S}Z_{2m},Z_{2m}\rangle_{\mathbb{A}'} } \right \}\geq 0.$$
Therefore from Theorem \ref{mainortho}, we have
$$\mathbb{P} \perp_{\omega_{\mathbb{A}'}} \mathbb{S}.$$
 So, $\omega_{\mathbb{A}'}\left(\begin{array}{cc}
T_{ii}&T_{ij} \\
O&T_{jj}
\end{array}\right)=\omega_{\mathbb{A}'}(\mathbb{P}+1\times \mathbb{S}) \geq \omega_{\mathbb{A}'}(\mathbb{P})=\frac{\|T_{ij}\|_A}{2}.$  Hence, completes the proof.

%{\bf I don't understand very well the proof: in order to prove that $\mathbb{P} \perp_{\omega_{\mathbb{A}'}} \mathbb{S}.$ we have two conditions Please see Theorem 2.5.}
\end{proof}

\begin{remark}
In particular, if we consider $A=I$ in Corollary \ref{cor-2} then we get the inequality in \cite[Th. 3.4]{malpaulsen}.
\end{remark}

%%%%%%%%%%%%%%%%%%%%%%%%%%%%%%%%%%%%%%%%%%%%%%%%%%%%%%%%%%%%%%%%%%%%%%%%%%%%%%%%%%%%%%%%%%%%%%%%%%%%%%%%
%-----------------------------------------------{chapter}{Bibliography}------------------------------------------

\end{document}